\documentclass[12pt,centertags,oneside]{amsart}

\usepackage{amsmath,amstext,amsthm,amscd,typearea,hyperref,stmaryrd,mathabx}
\usepackage{amssymb}
\usepackage{a4wide}
\usepackage[mathscr]{eucal}
\usepackage{mathrsfs}
\usepackage{typearea}
\usepackage{charter}
\usepackage{pdfsync}
\usepackage[a4paper,width=16.2cm,top=3cm,bottom=3cm,lines=50]{geometry}
\usepackage[all]{xy}
\usepackage{tikz}
\usepackage{enumitem}
\usepackage[foot]{amsaddr}
\usepackage{academicons}
\usepackage{xcolor}

\numberwithin{equation}{section}
\newcommand{\orcid}[1]{\href{https://orcid.org/#1}{\textsc{orc}i\textsc{d}}}


\title[]{Hyperbolic foliated entropy of suspensions}

\author{Fran\c cois Bacher}
\address{Université Bourgogne Europe, CNRS, IMB UMR 5584, F-21000 Dijon, France}
\email{francois.bacher@ube.fr}

\date{\today}

\theoremstyle{plain}
\newtheorem{thm}{Theorem}[section]
\newtheorem{lem}[thm]{Lemma}

\newtheorem{prop}[thm]{Proposition}
\newtheorem{cor}[thm]{Corollary}
\newtheorem*{thm*}{Theorem}
\newtheorem*{conj*}{Conjecture}

\theoremstyle{definition}
\newtheorem{defn}[thm]{Definition}

\newtheorem*{exmp*}{Example}

\theoremstyle{remark}
\newtheorem{rem}[thm]{Remark}

\makeatletter

\@addtoreset{equation}{section}
\makeatother

\DeclareMathOperator{\id}{id}

\DeclareMathOperator{\card}{card}

\DeclareMathOperator{\Aut}{Aut}

\DeclareMathOperator{\Homeo}{Homeo}

\newcommand{\adh}[1]{\overline{#1}}

\newcommand{\PC}{P}

\newcommand{\eps}{\varepsilon}

\newcommand{\fol}{\mathscr{F}}

\newcommand{\leafatlas}{\mathscr{L}}

\newcommand{\nspllam}{\left(\mani{X},\leafatlas\right)}

\newcommand{\set}[1]{\mathbb{#1}}

\newcommand{\leaf}{L}

\newcommand{\leafu}[1]{\leaf_{#1}}

\newcommand{\intcc}[2]{\left[#1,#2\right]}

\newcommand{\intoo}[2]{\left(#1,#2\right)}

\newcommand{\intoc}[2]{\left(#1,#2\right]}

\newcommand{\intent}[2]{\left\llbracket#1,#2\right\rrbracket}

\newcommand{\sent}[1]{\lceil#1\rceil}

\newcommand{\ient}[1]{\lfloor#1\rfloor}

\newcommand{\ientp}[1]{\left\lfloor#1\right\rfloor}

\newcommand{\sentp}[1]{\left\lceil#1\right\rceil}

\newcommand{\Cmod}[1]{\left\vert#1\right\vert}

\newcommand{\class}[1]{\mathscr{C}^{#1}}

\newcommand{\smooth}{\class{\infty}}

\newcommand{\mani}[1]{#1}

\newcommand{\DR}[1]{\set{D}_{#1}}

\newcommand{\adhDR}[1]{\adh{\set{D}}_{#1}}

\newcommand{\dPC}[2]{\dPCnov(#1,#2)}

\newcommand{\dPCs}[3]{d_{\PC,#1}(#2,#3)}

\newcommand{\dPCnov}{d_{\PC}}

\newcommand{\metPC}{g_{\PC}}

\newcommand{\metm}[1]{g_{\mani{#1}}}

\newcommand{\wt}[1]{\widetilde{#1}}

\pagestyle{plain}

\begin{document}

\theoremstyle{plain}

\begin{abstract} We study the hyperbolic entropies of foliations obtained by suspensions of a representation, in the sense of Dinh, Nguy\^{e}n and Sibony (topological and measure-theoretic). We establish a link between this type of entropy and an adapted version of an entropy defined by Ghys, Langevin and Walczak for pseudo-groups of homeomorphisms.

  Such a link has various consequences. Among them, it implies that the hyperbolic entropy of foliations is not invariant by diffeomorphisms, and that a minimal entropy suspension admits an invariant measure. Finally, this allows us to study thoroughly the simple case in which the image of the representation is isomorphic to~$\set{Z}$. In that case, we give the first exact estimate of the hyperbolic entropy, and prove a Brin--Katok type theorem and a variational principle, relying strongly on the standard ones for the entropy of maps.
  \end{abstract}

\maketitle

\section{Introduction}

The dynamical theory of laminations by Riemann surfaces has received much attention in the last twenty years. In particular, much research has been focused on the case where all the leaves are hyperbolic Riemann surfaces. In the case of foliations on projective spaces, this is the most common setup. Indeed, every polynomial vector field on~$\set{C}^n$ induces a singular holomorphic foliation on~$\set{P}^n$. For degree at least~$2$ generic foliations, all the leaves are hyperbolic. This result is due to Jouanolou~\cite{Jou}, Lins Neto and Soares~\cite{LNS}, Lins Neto~\cite{LN94} and Glutsyuk~\cite{Glu}. In that case, considering Brownian motions, one can define Lyapunov exponents of a measure. When~$n=2$, Nguy\^{e}n uses the integrability of the holonomy cocycle~\cite{Nguholo} to compute exactly the Lyapunov exponent~\cite{NguLyap} of the unique ergodic measure of a generic foliation obtained by Forn\ae{}ss and Sibony~\cite{ForSibuniergo}. It is worth noting that the unique ergodicity is extended later by Dinh, Nguy\^{e}n and Sibony~\cite{DNSergo} to a generic foliation on every compact K\"{a}hler surface. We refer the reader to the survey articles~\cite{surDinhSib,surForSib,surVANG18,surVANG21} for a more detailed exposition of this theory.

Here, we are mostly interested in an ergodic theory for laminations by hyperbolic \mbox{Riemann} surfaces. Solving heat equations with respect to harmonic currents, Dinh, Nguy\^{e}n and Sibony~\cite{DNS12} obtain an effective geometric analog of Birkhoff theorem. In this theorem, the quantity that plays the role of the time is the Poincaré distance in the uniformization of the leaves by the hyperbolic disk, and the usual invariant measures are replaced by harmonic measures. Indeed, many foliations do not admit any invariant measure. Since Garnett~\cite{Gar} (resp. Berndtsson--Sibony~\cite{BerSib}) shows that any compact lamination (resp. any compact holomorphic foliation with isolated singularities) does admit harmonic ones, we need to consider them. The hyperbolic interpretation of time then leads to notions of entropy, both measure-theoretic and topological, that the three authors define in a series of two articles~\cite{DNSI,DNSII}.

Let us introduce some notations and give a rough definition of the entropy (see Subsection~\ref{subsechypent} for the more precise one). Let $\fol=\nspllam$ be a lamination by hyperbolic Riemann surfaces, with $(X,d_X)$ a compact metric space. Then, all the leaves are uniformized by the Poincaré disk and one can choose parametrizations $\phi_x\colon\set{D}\to\leafu{x}$ such that $\phi_x(0)=x$, for $x\in X$, where~$\leafu{x}$ is the leaf passing through~$x$. For $R>0$, define a \emph{Bowen distance}
\begin{equation}\label{defdRintro}d_R(x,y)=\inf_{\theta\in\set{R}}\sup_{\xi\in\adhDR{R}}\,d_X(\phi_x(\xi),\phi_y(e^{i\theta}\xi)),\end{equation}
where $\DR{R}\subset\set{D}$ is the disk of hyperbolic radius~$R$ centered at~$0$. Note that the infimum over all possible parametrizations ensures that~$d_R$ is independent on the choice of the~$\phi_x$, for $x\in X$. Once given Bowen balls, Dinh, Nguy\^{e}n and Sibony can define a topological entropy $h(\fol)$ in the usual Bowen way. It measures the exponential rate with~$R$, at which leaves go apart with respect to~$d_R$. Also, given a harmonic measure~$\mu$, one can consider quantities introduced by Katok~\cite{Katok} and Brin--Katok~\cite{BrinKatok} as definitions, respectively for a measure-theoretic entropy $h(\mu)$, and for local upper and lower entropies $h^{\pm}(\mu,x)$ (see Section~\ref{secmesharm}).

So far, few is known about these quantities. They are finite (with no upper bound) in some cases~\cite{DNSI,DNSII,Bac3}. If~$\mu$ is ergodic and with additional asumptions~\cite{DNSI,Bac4}, the $h^{\pm}(\mu,x)$ are known to be constant almost everywhere and we have
\[2\leq h^-(\mu)\leq h(\mu)\leq h^+(\mu)\leq h(\fol).\]
In particular, there is no clear interpretation of the fact that the entropy is minimal equal to~$2$, neither is there any known non-trivial example in which it is computable. We expect, as in Brin--Katok theorem, that one could show $h^-(\mu)=h^+(\mu)$. We also hope a variational principle $h(\fol)=\sup_{\mu}h(\mu)$, but no example can argue in favor of such a belief. As a matter of fact, there is not even a written example on which it is known to be greater than~$2$.

This note intends to give first examples of explicit bounds and values for these entropies with suspensions. Consider a compact smooth Riemann surface~$\Sigma$ of genus $g\geq2$ and a representation $\rho\colon\Gamma=\pi_1(\Sigma)\to\Homeo(T)$, where~$(T,d)$ is a compact metric space. Then, $\Gamma$ acts diagonally on $\set{D}\times T$, and $X=\left(\set{D}\times T\right)/\Gamma$ is foliated by the image of horizontal disks $\set{D}\times\{t\}$, $t\in T$ (see Subsection~\ref{subsecsusp}). From the point of view of the entropies, such a lamination has the advantage of having completely explicit parametrizations~$\phi_x$, for $x\in X$. Hence, we can compare the entropy $h(\fol)$ with the one given by the holonomy~$\rho$. More precisely, given a symmetric set of generators~$\mathcal{G}$ of~$\Gamma$, Ghys--Langevin--Walczak~\cite{GLW} define a Bowen distance for $N\in\set{N}$,
\[d_N(t,s)=\sup\left\{d(\rho(g_1\dots g_n)(t),\rho(g_1\dots g_n)(s))~;~n\leq N,~g_1,\dots,g_n\in\mathcal{G}\right\},\qquad t,s\in T.\]
Here, the quantity that plays the role of the time is the word distance in~$\Gamma$ with generators~$\mathcal{G}$. As usual, a Bowen distance induces an entropy. Here, we use the same kind of distance with a weight~$\omega\colon\mathcal{G}\to\set{R}_+^*$, to define an entropy $h(\rho,\mathcal{G},\omega)$, closely related to the one of Ghys, Langevin and Walczak. Our main result is the following (see Theorem~\ref{hThrho} below for a more precise statement).

\begin{thm}\label{mainthm} Consider a suspension as above. There exists $K=K(\Gamma)>1$ such that
  \[2+h(\rho,\mathcal{G},\omega)\leq h(\fol)\leq 2+Kh(\rho,\mathcal{G},\omega).\]
\end{thm}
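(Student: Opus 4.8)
The plan is to compare the two Bowen distances $d_R$ on $X$ and $d_N$ on $T$ by tracking how the Poincaré disk of radius $R$ in a leaf corresponds to a ball of word-length roughly $N\sim R$ in the group $\Gamma$. Since the suspension has completely explicit parametrizations $\phi_x$, one can lift everything to the universal cover $\set{D}\times T$: a point $x\in X$ is represented by a pair $(\xi_0,t)$, and the leaf $\leafu{x}$ is the image of $\set{D}\times\{t\}$ under the $\Gamma$-action. The key geometric input is that, after fixing a fundamental domain $\mathcal{F}\subset\set{D}$ for $\Gamma$ acting on the hyperbolic disk and a symmetric generating set $\mathcal{G}$, there are constants so that the hyperbolic ball $\adhDR{R}$ meets only those translates $\gamma\mathcal{F}$ with $|\gamma|_{\mathcal{G}}$ between $c_1 R - C$ and $c_2 R + C$; this is the standard Milnor--Švarc comparison between the word metric and the hyperbolic metric on $\Gamma$, and it is precisely the source of the constant $K=K(\Gamma)>1$ (the ratio $c_2/c_1$, up to the choice of weight $\omega$ encoding the translation lengths of the generators).

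First I would establish the \emph{lower bound} $2+h(\rho,\mathcal{G},\omega)\leq h(\fol)$. The term $2$ comes from the two-dimensional exponential growth of the hyperbolic disk itself — even with trivial holonomy, separating points in the transverse direction against the full Poincaré ball already costs entropy $2$, which is the base value recorded in the excerpt. On top of that, if two transversals $t,s\in T$ are $(N,\eps)$-separated for the Ghys--Langevin--Walczak distance $d_N$ weighted by $\omega$, then following the corresponding holonomy chain along a geodesic segment of hyperbolic length $\asymp N$ shows that the leaves through $(\,0,t)$ and $(0,s)$ are $(R,\eps')$-separated for $d_R$ with $R$ proportional to $N$. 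Combining a maximal $(N,\eps)$-separated set in $T$ with an $\eps$-net of the hyperbolic disk and taking exponential growth rates yields the additive lower bound.

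For the \emph{upper bound} $h(\fol)\leq 2+Kh(\rho,\mathcal{G},\omega)$, I would go the other direction: cover $\adhDR{R}$ by the translates $\gamma\mathcal{F}$ it meets, of which there are at most $e^{2R+o(R)}$ many (area growth), each carrying a holonomy word $\gamma$ of length at most $c_2 R+C$. An $(R,\eps)$-spanning set for $\fol$ can then be built from: an $\eps$-net of each translated fundamental domain (contributing the factor $2$ in the exponent) together with an $(N,\eps')$-spanning set of $T$ for $d_N$ with $N = c_2 R + C$. Counting gives $\operatorname{cov}(R,\eps)\lesssim e^{2R}\cdot e^{N\,h(\rho,\mathcal{G},\omega)}$, and since $N/R\to c_2$ while the natural normalization of $d_R$ uses the intrinsic length scale (giving $c_1$), the ratio $c_2/c_1$ emerges as $K$. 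Taking $\limsup_{R\to\infty}\frac1R\log$ finishes it.

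The main obstacle I expect is the matching of normalizations so that the additive constant is exactly $2$ and not $2\cdot(\text{something})$: one must be careful that the $\eps$-nets of the fundamental domains are measured in the same hyperbolic scale as the one implicit in $d_R$, and that the infimum over rotations $e^{i\theta}$ in the definition~\eqref{defdRintro} of $d_R$ does not create extra slack. Equivalently, the delicate point is to show that transverse separation and leafwise ($2$-dimensional) separation decouple cleanly at the level of exponential growth rates, so that the two contributions simply add. I would handle this by working with the fibered structure of $\set{D}\times T$ and estimating the two factors of a product cover independently, invoking the explicit form of $\phi_x$ to control the holonomy maps uniformly; the weight $\omega$ is then chosen exactly so that $h(\rho,\mathcal{G},\omega)$ sees the correct per-unit-hyperbolic-length cost, which is what makes the lower bound sharp with constant $1$ in front.
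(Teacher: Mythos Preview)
Your overall architecture---split the entropy into a leafwise contribution $2$ and a transversal holonomy contribution, then compare the latter to $h(\rho,\mathcal{G},\omega)$ via a Milnor--\v{S}varc type estimate---is exactly the paper's strategy (Theorems~\ref{hFhT} and~\ref{hThrho}). But two points in your sketch are not right, and the first is a genuine error.

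\textbf{Where the $2$ comes from.} You write that $\adhDR{R}$ meets ``at most $e^{2R+o(R)}$'' translates of the fundamental domain by area growth. This is off by a factor in the exponent: the hyperbolic area of $\adhDR{R}$ is $\sim e^{R}$, so only $\sim e^{R}$ translates are hit (equivalently $\card\Gamma_R\sim e^R$). The constant $2$ does \emph{not} come from counting tiles. It comes from the fact that two points on the same leaf which are at Poincaré distance $\gtrsim e^{-R}$ are already $(R,\eps)$-separated (Lemma~\ref{lemDNS1Aut}). Hence the leafwise part of an $(R,\eps)$-net is an $e^{-R}$-net of the \emph{fixed} fundamental domain $D$, and that has $\sim e^{2R}$ points because small hyperbolic balls of radius $r$ have area $\sim r^2$. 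Your ``$\eps$-net of each translated fundamental domain'' gives bounded cardinality per tile and only $e^R$ tiles, which would produce a $1$, not a $2$. The paper's proof of Theorem~\ref{hFhT} makes this explicit: one takes $G$ an $A^{-1}e^{-R}$-dense set in $\DR{\delta_0}$, with $\card G\le Ce^{2R}$.

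\textbf{The reparametrization infimum.} You correctly flag the $\inf_\theta$ in~\eqref{defdRintro} as the delicate step, but ``working with the fibered structure'' does not by itself resolve it. For the separated-set argument one must show that if $d_R(\phi_t(\zeta),\phi_s(\xi))<\eps$ with $\zeta,\xi$ in a small disk, then necessarily $\zeta=\xi$ and the canonical parametrizations $\phi_t,\phi_s$ themselves are close on $\adhDR{(1-\delta)R}$. The paper isolates this as Lemma~\ref{lemhh'} (and Remark~\ref{remhh'}): it uses the quantitative estimate $\dPC{\xi}{e^{i\theta}\xi}$ versus $e^{-R}$ (Lemma~\ref{lemBac4angles}) together with the local comparison of the ambient and leafwise metrics (Lemma~\ref{lemh>dp>eps}) to force $|\theta|\le e^{-R}$, which kills the rotation ambiguity. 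Without this lemma you cannot conclude that your product set $F\times G$ is genuinely $(R,\eps)$-separated for $d_R$, only for the easier distance $d_R'$ of~\eqref{defdR'}; bridging $d_R$ and $d_R'$ is precisely the content of Lemma~\ref{lemhh'}.
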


In particular, this has the following consequences. For the second one, we use a result of Ghys, Langevin and Walczak~\cite{GLW}.

\begin{cor}[Proposition~\ref{pasinvdiff} below] The entropy is \emph{not} a smooth invariant of laminations.
\end{cor}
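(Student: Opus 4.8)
The plan is to use Theorem~\ref{mainthm} to reduce the statement to the existence of two suspensions which are smoothly conjugate as foliated manifolds but for which the associated Ghys--Langevin--Walczak entropies $h(\rho,\mathcal{G},\omega)$ differ. More precisely, since the main theorem sandwiches $h(\fol)$ between $2+h(\rho,\mathcal{G},\omega)$ and $2+Kh(\rho,\mathcal{G},\omega)$, it suffices to exhibit two representations $\rho_0,\rho_1\colon\Gamma\to\Homeo(T)$ whose suspensions are smoothly isomorphic and such that $h(\rho_0,\mathcal{G},\omega)=0$ while $h(\rho_1,\mathcal{G},\omega)>0$; then the two hyperbolic foliated entropies equal $2$ and something strictly larger than $2$, respectively, and yet the foliated manifolds are diffeomorphic.

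First I would recall the criterion of Ghys, Langevin and Walczak \cite{GLW}: the geometric entropy of the pseudo-group generated by the holonomy of a suspension vanishes when the holonomy is, say, conjugate to an action by isometries or to a sufficiently tame action, and is positive for classical examples where $\rho$ produces exponential divergence of orbits (for instance a representation $\Gamma\to\PSL(2,\set{R})$ acting on the circle $T=\set{S}^1$ with a hyperbolic element, giving a Riccati-type foliation with positive geometric entropy). I would then invoke their observation that two suspensions of conjugate representations — conjugate by a homeomorphism of $T$ that need not be smooth, or more to the point, two representations that are \emph{topologically} conjugate but whose suspensions one can nonetheless arrange to be \emph{smoothly} diffeomorphic by a global diffeomorphism of the total space not respecting the product structure — can have different geometric entropies. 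The cleanest route is the explicit example already present in \cite{GLW}, where they show precisely that their geometric entropy is not a topological (hence not a smooth) invariant of the foliated manifold; one transports that example through the suspension construction and applies Theorem~\ref{mainthm} on each side.

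The key steps, in order, are: (1) fix the ambient surface $\Sigma$ of genus $g\ge 2$, the generating set $\mathcal{G}$ and weight $\omega$; (2) take the pair of representations from the Ghys--Langevin--Walczak counterexample, forming suspensions $\fol_0$ and $\fol_1$ on the same smooth manifold $X$; (3) check that the smooth diffeomorphism between the two foliated manifolds is genuinely smooth and sends leaves to leaves, so that the hyperbolic foliated entropy — if it were a smooth invariant — would have to agree on both; (4) apply Theorem~\ref{mainthm}: $h(\fol_0)=2+h(\rho_0,\mathcal{G},\omega)=2$ (the holonomy having zero geometric entropy) while $h(\fol_1)\ge 2+h(\rho_1,\mathcal{G},\omega)>2$; (5) conclude the contradiction, hence $h(\cdot)$ is not a smooth invariant.

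The main obstacle I anticipate is step (3): one must ensure that the relevant diffeomorphism of the total space is $\class{\infty}$ and carries the leaf structure of $\fol_0$ to that of $\fol_1$, even though the conjugacy at the level of $T$ is only topological. This is exactly the subtlety exploited in \cite{GLW} — smooth equivalence of the foliated manifolds does not force smooth conjugacy of the holonomy pseudo-groups — but one has to verify that the construction survives the passage to a \emph{hyperbolic} Riemann surface lamination, i.e.\ that the leafwise conformal structures (and hence the Poincaré metrics entering the definition of $d_R$) are the standard ones coming from $\set{D}$ on both sides; since the suspension construction always endows the leaves with the conformal structure pulled back from $\set{D}$, this is automatic, and the only real work is citing the Ghys--Langevin--Walczak example correctly and checking that the weight $\omega$ can be chosen uniformly so that $h(\rho_i,\mathcal{G},\omega)$ retains its sign for both $i=0,1$.
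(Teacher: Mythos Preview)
Your plan has a genuine gap: the mechanism you propose is not the one that actually produces non-invariance, and in fact the paper proves your dichotomy cannot occur.

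You fix the surface $\Sigma$, the generators $\mathcal{G}$ and the weight $\omega$, and try to vary the representation so that one suspension has $h(\rho_0,\mathcal{G},\omega)=0$ and the other $h(\rho_1,\mathcal{G},\omega)>0$, while the two suspensions remain smoothly isomorphic. But the paper's Proposition~\ref{h2inffaible} shows exactly that this is impossible for the natural diffeomorphisms between suspensions (those induced by a homeomorphism $\Sigma_1\to\Sigma_2$ and a homeomorphism $T_1\to T_2$ intertwining the representations): the property ``$h(\fol)=2$'' is invariant under such maps. Your appeal to a more exotic diffeomorphism ``not respecting the product structure'' is never made concrete, and the vague reference to ``the Ghys--Langevin--Walczak example'' does not help: what \cite{GLW} actually exhibit is the dependence of their entropy on the \emph{generating set}, not two topologically conjugate actions with different entropies (topological conjugacy preserves $h_{\rm GLW}$ once the generating sets correspond).

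The paper's argument works in the opposite direction: it keeps the representation essentially fixed (one generator sent to a positive-entropy map $f$, the others to the identity) and varies the \emph{Fuchsian group}, i.e.\ the hyperbolic structure on $\Sigma$. Two such surfaces of the same genus are diffeomorphic but not biholomorphic, so the induced diffeomorphism of suspensions is leafwise smooth but not leafwise conformal; this is precisely why the hyperbolic entropy (which \emph{is} invariant under leafwise biholomorphisms) can change. Concretely, since $\omega(\alpha)=\dPC{0}{\alpha(0)}$ depends on the Fuchsian group, one uses Proposition~\ref{existdegFuchs} to make the weight of the distinguished generator arbitrarily small in $\Gamma_2$, forcing $h(\fol_2)\geq 2+2h(f)/\omega_2(\alpha_{1,2})$ to exceed the fixed upper bound $h(\fol_1)\leq 2+2K'(\Gamma_1)h(f)/\omega_1(\alpha_{1,1})$. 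You missed this degree of freedom entirely; nothing in your outline touches the hyperbolic structure or the weight.
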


\begin{cor}[Proposition~\ref{2impinvv2} below] Let~$\fol$ be a suspension with entropy~$2$. Then,~$\fol$ admits an invariant measure.
\end{cor}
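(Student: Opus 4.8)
The plan is to combine Theorem~\ref{mainthm} with a theorem of Ghys, Langevin and Walczak on pseudo-groups of vanishing geometric entropy. First I would observe that if $h(\fol)=2$, then, since $h(\rho,\mathcal{G},\omega)$ is non-negative (being a Bowen-type entropy), the left-hand inequality $2+h(\rho,\mathcal{G},\omega)\leq h(\fol)$ of Theorem~\ref{mainthm} forces $h(\rho,\mathcal{G},\omega)=0$. I would then note that because $\mathcal{G}$ is finite the weight $\omega$ takes finitely many values in $\set{R}_+^*$, hence is bounded between two positive constants; the weighted Bowen distances defining $h(\rho,\mathcal{G},\omega)$ are therefore comparable, up to bounded multiplicative and time-rescaling factors, to the unweighted Ghys--Langevin--Walczak ones. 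Consequently $h(\rho,\mathcal{G},\omega)$ vanishes if and only if the geometric entropy, in the sense of~\cite{GLW}, of the pseudo-group generated on $(T,d)$ by $\rho(\mathcal{G})$ is zero — so this geometric entropy vanishes.

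Next I would invoke the result of~\cite{GLW}: a compactly generated pseudo-group of homeomorphisms of a compact metric space whose geometric entropy vanishes preserves a Borel probability measure. Applied to the pseudo-group generated by $\rho(\mathcal{G})$ on $T$ (which is compactly generated since $\mathcal{G}$ is finite and $T$ is compact), this yields a probability measure $\nu$ on $T$ invariant under every $\rho(g)$, $g\in\mathcal{G}$, hence under the whole image $\rho(\Gamma)$. Finally I would transfer $\nu$ to the foliation. Writing $X=\left(\set{D}\times T\right)/\Gamma$, where $\Gamma$ acts on $\set{D}$ by deck transformations of a uniformization $\set{D}\to\Sigma$ and on $T$ through $\rho$, the product of the Poincaré area form on $\set{D}$ with $\nu$ is invariant under this diagonal action, since isometries preserve the Poincaré area and $\rho(\Gamma)$ preserves $\nu$; it therefore descends to $X$, giving a measure that is leafwise absolutely continuous with holonomy-invariant transverse part $\nu$ — that is, exactly an invariant (transverse) measure of $\fol$, by the standard description of transverse invariant measures of a suspension.

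The main obstacle I anticipate is the precise invocation of~\cite{GLW}: one must check that the data $(T,\rho,\mathcal{G})$ coming from the suspension genuinely produce a pseudo-group to which their zero-entropy statement applies, and that the normalisation of geometric entropy there matches the one governing $h(\rho,\mathcal{G},\omega)$, so that the vanishing really does carry over. Once these identifications are in place, the reduction from the weighted to the unweighted entropy and the passage from $\nu$ to a foliation-invariant measure should be routine.
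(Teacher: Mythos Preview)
Your proposal is correct and follows essentially the same route as the paper: reduce $h(\fol)=2$ via Theorem~\ref{mainthm} (together with Lemma~\ref{lemlienhGLWh}) to $h_{\text{GLW}}(\rho(\Gamma),\rho(\mathcal{G}))=0$, then invoke~\cite[Section~5]{GLW} to obtain a $\Gamma$-invariant probability~$\nu$ on~$T$. The only difference is the final transfer step: the paper passes from~$\nu$ to an invariant measure of~$\fol$ by citing Alvarez' correspondence (Theorem~\ref{thmAlvarez}), whereas you build it directly as the quotient of the product of the Poincar\'e area with~$\nu$ --- both arguments are valid and produce the same measure.
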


Next, we consider the measure-theoretic entropies. For suspensions, Alvarez~\cite{Alvarez} shows that for a well-chosen probability measure~$m$ on~$\Gamma$, there is a one-to-one correspondence between $m$-stationary measures on~$T$, and harmonic measures on~$\fol$, for which invariant measures correspond to invariant measures. So, we obtain the same kind of comparison of the measure-theoretic entropies $h^-(\mu),h(\mu),h^+(\mu)$, with entropies of the action of~$\Gamma$ on~$T$. In particular, when the image of $\rho$ in $\Homeo(T)$ is generated by a single map~$f$, we obtain the following link with the usual topological entropy $h_{\text{top}}(f)$ and the usual measure-theoretic entropy $h_{\nu}(f)$ of~$f$.

\begin{thm}[Theorem~\ref{casZ} below]\label{casZintro} In the above setup, let~$\mu$ be an ergodic harmonic measure on~$\fol$ and~$\nu$ be the corresponding $m$-stationary measure on~$T$ given by Alvarez~\cite{Alvarez}. Then~$\mu$ and~$\nu$ are in fact invariant and there exists a~$K_0>0$ such that
  \begin{itemize}
  \item $h(\fol)=2+2K_0h_{\text{top}}(f)$,
  \item $h^-(\mu)=h(\mu)=h^+(\mu)=2+2K_0h_{\nu}(f)$.
  \end{itemize}
  In particular, we have the variational principle $h(\fol)=\sup_{\mu}h(\mu)$.
\end{thm}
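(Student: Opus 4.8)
The plan is to compute $h(\fol)$, $h^{\pm}(\mu)$ and $h(\mu)$ \emph{exactly} by returning to the explicit Bowen balls~\eqref{defdRintro} of the suspension, rather than routing through the lossy constant $K$ of Theorem~\ref{mainthm}. The whole point is that, when $\Ima\rho=\langle f\rangle$ is cyclic, $\rho$ factors as $\rho=f^{\chi}$ for a surjective homomorphism $\chi\colon\Gamma\to\set{Z}$, and a foliated Bowen ball then splits, up to uniform bounded distortions, as a product of a \emph{leafwise} factor --- responsible for the universal summand $2$ of~\cite{DNSI} --- and a \emph{transverse} factor governed by the classical dynamics of $f$. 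Fixing a fundamental domain for $\Sigma=\set{D}/\Gamma$, one works in the explicit suspension charts $\set{D}\times T$ in which the leaf through $x=[0,t]$ is parametrised by $\phi_x(\xi)=[\xi,t]$ and the holonomy from the fibre over $\xi$ to the fibre over $\gamma\cdot\xi$ is $f^{\chi(\gamma)}$. The relevant ``clock'' is the length function $L(R):=\max\bigl\{\lvert\chi(\gamma)\rvert : \gamma\in\Gamma,\ d_{\set{D}}(0,\gamma\cdot0)\le R\bigr\}$, where $d_{\set{D}}$ is the Poincaré distance. Since $\chi$ is additive and hyperbolic displacement is subadditive under concatenation, $L$ is superadditive; it grows at least linearly ($\chi$ being onto) and at most linearly (integrate a harmonic representative of $\chi$ along geodesics), so by Fekete's lemma $K_0:=\lim_{R\to\infty}L(R)/R$ exists in $(0,\infty)$ --- this is what upgrades the $\limsup$'s in the definitions to genuine limits. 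Moreover, walking along a geodesic through consecutive fundamental domains changes $\chi$ by bounded increments, so $\{\chi(\gamma):d_{\set{D}}(0,\gamma\cdot0)\le R\}$ exhausts the integers of absolute value at most $L(R)$, up to an $O(1)$ error.

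For the topological formula, fix a chart around $x=[0,t]$. The geometric core is that, after the infimum over the rotational reparametrisations built into~\eqref{defdRintro} (which turns out to help nothing: a nonzero rotation, or a nonzero shift of the centre, moves the boundary of a hyperbolic $R$-disk by a comparatively huge hyperbolic distance), a point $y=[z,s]$ lies in $B_R(x,\eps)$ essentially iff $\lvert z\rvert\lesssim\eps\,e^{-R}$ and $d(f^ks,f^kt)<\eps'$ for every integer $\lvert k\rvert\le L(R)$ --- the latter by the exhaustion statement above, since the holonomies met over $\adhDR{R}$ run through exactly these $f^k$ up to $O(1)$; the equivalence holds after adjusting $\eps,\eps'$ by fixed multiplicative factors, by uniform continuity of $f$ and of the charts ($T$ and $X$ being compact). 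Hence the minimal number of $(\eps,R)$-Bowen balls covering $X$ is comparable to $(e^{R}/\eps)^{2}$ --- the factor $e^{2R}$ of the $2$-dimensional base --- times the covering number of $T$ by two-sided $f$-Bowen balls of order $L(R)$, whose logarithm is $(2L(R)+1)\,h_{\text{top}}(f)+o(L(R))$ (a two-sided Bowen ball of order $n$ being a one-sided one of order $2n+1$). The same product description bounds separated sets from below. Dividing by $R$, letting $R\to\infty$ with $L(R)/R\to K_0$, and then $\eps\to0$, yields $h(\fol)=2+2K_0\,h_{\text{top}}(f)$.

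For the measure-theoretic part, let $\mu$ be ergodic harmonic and $\nu$ the corresponding ergodic $f$-invariant probability on $T$ given by Alvarez~\cite{Alvarez}. Using the local product structure of $\mu$ --- its leafwise densities are positive and harmonic, hence $\asymp$ a positive constant near $\xi=0$, and its transverse part on $\{0\}\times T$ is proportional to $\nu$ --- the same description of $B_R(x,\eps)$ gives $\mu(B_R(x,\eps))\asymp \eps^2 e^{-2R}\cdot\nu\bigl(\{s:d(f^ks,f^kt)<\eps'\ \forall\,\lvert k\rvert\le L(R)\}\bigr)$, and the classical Brin--Katok theorem~\cite{BrinKatok} for the homeomorphism $f$ shows the last factor is $e^{-(2L(R)+o(L(R)))h_\nu(f)}$ for $\nu$-a.e.\ $t$, using ergodicity of $\nu$. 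Therefore $-\tfrac1R\log\mu(B_R(x,\eps))\to 2+2K_0\,h_\nu(f)$ for $\mu$-a.e.\ $x$ and every $\eps$, so $h^{-}(\mu,x)=h^{+}(\mu,x)=2+2K_0\,h_\nu(f)$ almost everywhere, and the recalled sandwich $h^{-}(\mu)\le h(\mu)\le h^{+}(\mu)$ forces $h(\mu)$ to equal the same value. Finally, Alvarez's correspondence is a bijection on ergodic measures and $h(\mu)\le h(\fol)$ always, so $\sup_{\mu}h(\mu)=2+2K_0\sup_{\nu}h_\nu(f)=2+2K_0\,h_{\text{top}}(f)=h(\fol)$ by the classical variational principle for $f$, which is the announced statement.

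The only non-formal step --- and the one genuinely using that $\Ima\rho$ is cyclic --- is this uniform two-sided product estimate for foliated Bowen balls: one must check that the infimum over leafwise rotations in~\eqref{defdRintro} is forced down to scale $\eps\,e^{-R}$ and thus contributes nothing, and that the holonomy encountered over the \emph{whole} disk $\adhDR{R}$ (not merely near its boundary) is captured, up to $O(1)$, by the single integer $L(R)$. For a non-elementary image the analogous object would be an exponentially large ball in $\Gamma$, which is exactly what reinstates the loss $K$ of Theorem~\ref{mainthm}; the cyclic case collapses it to the one-parameter family $(f^k)$. A secondary point is to confirm that suspensions fall under the hypotheses ensuring a.e.\ constancy of $x\mapsto h^{\pm}(\mu,x)$ --- or, as done above, to deduce that constancy directly from ergodicity of $\nu$ via Brin--Katok for $f$.
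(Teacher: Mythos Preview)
Your approach is essentially the paper's: the same clock $L(R)=n(R)$ with existence of $K_0$ by subadditivity, the same leafwise/transversal splitting (the paper packages this as Theorem~\ref{hFhT} first and then compares the transversal Bowen balls to two-sided $f$-Bowen balls via Lemma~\ref{dGammad'} and the bounded-gap argument on $I(R)$, while you do the product decomposition of $B_R(x,\eps)$ in one go), and the same appeal to the classical variational principle at the end. Your remark that the rotational infimum contributes nothing is precisely the content of Lemma~\ref{lemhh'} and Remark~\ref{remhh'}.

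Two places deserve more care. First, you simply assert that $\nu$ is $f$-invariant; Alvarez only gives an $m$-stationary measure, and the paper explicitly invokes the Choquet--Deny property of~$\set{Z}$ (bounded harmonic functions are constant) to upgrade stationarity to invariance --- this is part of the statement to be proved. Second, ``the classical Brin--Katok theorem shows the last factor is $e^{-(2L(R)+o(L(R)))h_\nu(f)}$'' is not an immediate citation: since $\nu\bigl(B_n^{f^{\pm1}}(t,\eps)\bigr)=\nu\bigl(B_{2n}^f(f^{-n}(t),\eps)\bigr)$, the base point moves with~$n$, and the one-sided Brin--Katok statement does not apply pointwise. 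The paper handles this by reusing the summability from Brin--Katok's proof together with Borel--Cantelli for the lower bound, and a two-sided Shannon--McMillan--Breiman (via Birkhoff for both $f$ and $f^{-1}$) for the upper bound. These are routine adaptations, but they are adaptations, not citations.
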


Let us briefly describe the proofs of these results. The main problem is to compare on the one hand word distance (with weight) in~$\Gamma$, with generators~$\mathcal{G}$; and on the other hand the Poincaré distance to~$0$ in the disk. More precisely, the weight $\omega\colon\mathcal{G}\to\set{R}_+^*$ is defined so that for $\alpha=g_1\dots g_n\in\Gamma$, $\dPC{0}{\alpha(0)}$ is comparable to $\sum_{i=1}^n\omega(g_i)$. This result is obtained in Proposition~\ref{propcovest}. The second problem is to show that one can restrict our attention to a transversal and to the holonomy~$\rho$. This is done in Subsection~\ref{subsecT}, in which we show that one can decompose the entropy into a leafwise one equal to~$2$, and a transversal one given by the holonomy. Here, the arguments are very similar to~\cite{DNSI,DNSII,Bac4}. A crucial property that we show, is that one can get rid of the reparametrization issue in the definition~\eqref{defdRintro} of~$d_R$, by choosing canonical parametrizations, and that one still computes the same entropy. Finally, similar gymnastics of a different Bowen distance for the same entropy, give us that the transversal entropy is comparable to $h(\rho,\mathcal{G},\omega)$, as in Theorem~\ref{mainthm}.

Our more precise results for the measure-theoretic entropy come from a subadditive argument. If~$f$ is a generator of $\rho(\Gamma)$, we show that at the limit, the maximal power~$f^{n(R)}$ represented in~$\DR{R}$ is essentially $n(R)=K_0R$, for some constant~$K_0$. Therefore, all that remains to do is to adapt the proof of Brin--Katok theorem to an invertible case, that is, to powers of~$f$ between $-n(R)$ and $n(R)$, instead of from~$0$ to~$n$.

The article is organized as follows. In Section~\ref{seclam}, we present the notions of lamination, topological entropy, and suspension. In Section~\ref{sechyp}, we recall well-known facts about Fuchsian groups and prove our comparison of word distance and hyperbolic distance. In Section~\ref{seccomp}, we start by constructing the weighted entropy $h(\rho,\mathcal{G},\omega)$, then we show that we can restrict our attention to the transversal, and we prove the main theorem. Finally, in Section~\ref{secmesharm}, we recall the basics about invariant, stationary and harmonic measures and Alvarez' theorem, before proving Theorem~\ref{casZintro}.

\subsection*{Notations} Throughout this paper, we denote by $\set{D}$ the unit disk of $\set{C}$. For $R\in\set{R}_+^*$, we also denote by $\DR{R}=\{\xi\in\set{D}~;~\dPC{0}{\xi}<R\}$ the open disk of hyperbolic radius $R$ in $\set{D}$, where $\dPC{0}{\xi}=\ln\frac{1+\Cmod{\xi}}{1-\Cmod{\xi}}$ is the Poincaré distance between~$0$ and~$\xi$.

We will also consider a distance~$d=d_X$ on an ambient compact metric space~$X$, coming from a Riemannian metric on a manifold~$\mani{M}$. For $\phi,\psi\colon K\to X$ or $\tau,\sigma\colon K\to\set{D}$ (typically, $K=\adhDR{R}$, and the maps are in fact defined on whole~$\set{D}$), we denote by
\[d_K(\phi,\psi)=\sup_{\xi\in K}\,d(\phi(\xi),\psi(\xi));\qquad\dPCs{K}{\tau}{\sigma}=\sup_{\xi\in K}\,\dPC{\tau(\xi)}{\sigma(\xi)}.\]

For integers $p\leq q\in\set{Z}$, we denote by $\intent{p}{q}=\{p,p+1,\dots,q-1,q\}$ the integer interval between~$p$ and~$q$. If~$x$ is a real number, we denote by~$\ient{x}$ and~$\sent{x}$ the floor and ceiling parts of~$x$. That is, $\ient{x}$ (resp.~$\sent{x}$) is the greatest (resp. lowest) integer $p\in\set{Z}$ such that $p\leq x$ (resp. $p\geq x$).

Finally, we use $C$, $C'$, $C''$, etc. to denote positive constants which may change from a line to another.

\subsection*{Acknowledgments}  This work has been supported by the EIPHI Graduate school (contract ``ANR-17-EURE-0002''), by the Région ``Bourgogne Franche-Comté'', and by the French National Research Agency under the project DynAtrois (contract ``ANR-24-CE40-1163'').

The author would like to thank all those who asked ``naive'' questions about the hyperbolic entropy of foliations. This note intends to answer most of them in a more precise way than the usual waving hands.

\section{Laminations, suspensions and hyperbolic entropy}\label{seclam}

\subsection{Basics about laminations}\label{subseclam}
In this section, we recall some concepts in the context needed for this paper. In particular, we will not define laminations in the broadest sense. The reader may consult~\cite{surVANG21} for a more general definition.

\begin{defn}
  Let $(X,d_X)$ be a compact metric space. A \emph{lamination by Riemann surfaces} (from now on, ``lamination'') $\fol=\nspllam$ on~$X$ is the data of an atlas~$\leafatlas$ of~$X$, given by homeomorphisms
  \[\Phi_i\colon U_i\to\set{D}\times\set{T}_i,\]
  where $(U_i)_{i\in I}$ is an open covering of~$X$, $\set{T}_i$ are topological spaces (the \emph{transversals}), and the transition maps $\Phi_{ij}=\Phi_i\circ\Phi_j^{-1}$ are of the form
  \begin{equation}\label{chgtcartes}\set{D}\times\set{T}_j\to\set{D}\times\set{T}_i,\qquad(z,t)\mapsto\left(\psi_{ij}(z,t),\lambda_{ij}(t)\right),\end{equation}
  with $\psi_{ij}$ holomorphic in~$z$ and continuous in~$t$ and $\lambda_{ij}$ continuous.

  If the maps $\psi_{ij}$ and $\lambda_{ij}$ have more regularity, for example if they are $\class{r}$, Lipschitz, etc., we will say that \emph{the lamination $\fol$ is~$\class{r}$, Lipschitz}, etc.

 A coordinate chart $U\simeq\set{D}\times\set{T}$ as above is called a \emph{flow box}. The level sets $\Phi_i^{-1}\left(\set{D}\times\{t\}\right)$, for $t\in\set{T}_i$ are called \emph{plaques}. The form~\eqref{chgtcartes} of the transition maps ensures that the plaques are compatible, in the sense that two different plaques either have empty intersection, or have an open subset of~$\set{D}$ in common. A non-empty connected subset of~$X$ which contains each plaque it intersects, and which is minimal for this property, is called a \emph{leaf} of~$\fol$. The fact that $\psi_{ij}$ is holomorphic in~$z$ in~\eqref{chgtcartes} implies that a leaf~$\leaf$ of~$\fol$ inherits a Riemann surface structure.
\end{defn}

Here, we will only consider laminations which are embedded in a real manifold~$\mani{M}$. That way, using a partition of unity, one can build a smooth Riemannian metric~$\metm{M}$, which restricts to a Hermitian metric on the plaques. We also suppose that the distance~$d_X$ on~$X$ is the one coming from the distance induced by~$\metm{M}$ on~$\mani{M}$. This is actually the only reason why we need to consider embedded laminations.

We are mainly interested in laminations, the leaves of which are all hyperbolic \mbox{Riemann} surfaces. If $\fol=\nspllam$ is such a lamination, denote by~$\metPC{}$ the leafwise Poincaré metric. Two Hermitian metrics on a Riemann surface being pointwise proportional, there exists a map
\[\eta\colon X\to\set{R}_+^*,\quad\text{such that}\quad\eta^2\metPC{}=\metm{M}.\]
On the right hand side of the above equation, we have still denoted by~$\metm{M}$ its restriction to the tangent space to the leaves of~$\fol$. When~$X$ is compact, it is easy to see~\cite[p.~572]{DNSI} (and in the context of suspensions as in Subsection~\ref{subsecsusp}, the arguments can be made even simpler) that there exists a constant $c_0>1$ such that
\begin{equation}\label{bndeta}c_0^{-1}\leq\eta(x)\leq c_0,\qquad x\in X.\end{equation}

\subsection{Hyperbolic entropy}\label{subsechypent} Consider a lamination~$\fol=\nspllam$, the leaves of which are hyperbolic, where $(X,d_X)$ is a compact metric space. For each~$x\in X$, denote by~$\leafu{x}$ the leaf of~$\fol$ through~$x$ and by $\phi_x\colon\set{D}\to\leafu{x}$ a uniformization of~$\leafu{x}$ such that $\phi_x(0)=x$. Such a uniformization is unique up to precomposition by a rotation.

\begin{defn}[See~{\cite[Section~3]{DNSI}}]\label{defnN(Y,R,E)} For $R>0$, denote by~$\DR{R}=\{\xi\in\set{D}~;~\dPC{0}{\xi}<R\}$, where~$\dPCnov{}$ is the Poincaré distance on~$\set{D}$. Define the \emph{Bowen distance}
  \begin{equation}\label{defdR}d_R(x,y)=\inf_{\theta\in\set{R}}\sup_{\xi\in\adhDR{R}}\,d_X(\phi_x(\xi),\phi_y(e^{i\theta}\xi)),\qquad x,y\in X.\end{equation}

  Such a quantity measures the distance between the orbits of~$x$ and~$y$ until hyperbolic time~$R$, up to reparametrization by a rotation. Clearly, it is independent on the choice of uniformizations~$\phi_x,\phi_y$. The ball of center~$x\in X$ and radius $\eps\in\set{R}_+^*$ for the distance $d_R$ will be denoted by $B_R(x,\eps)$ and called the \emph{$R$-Bowen ball of center~$x$ and radius~$\eps$}.

  For $R,\eps>0$ and $Y\subset X$, define $N(Y,R,\eps)$ (resp. $N'(Y,R,\eps)$) the minimal $N\in\set{N}$ such that there exist $x_1,\dots,x_N\in X$ (resp. $x_1,\dots,x_N\in Y$) with
  \[Y\subset\bigcup\limits_{i=1}^NB_R(x_i,\eps).\]
  The subset $\{x_1,\dots,x_N\}$ is called \emph{$(R,\eps)$-covering of~$Y$} (resp. \emph{$(R,\eps)$-dense in~$Y$}). Also, denote by $M(Y,R,\eps)$ the maximal $M\in\set{N}$ such that there exist $x_1,\dots,x_M\in Y$ with the $B_R(x_i,\eps)$, $i\in\{1,\dots,M\}$, pairwise disjoint. The subset $\{x_1,\dots,x_M\}$ is called \emph{$(R,\eps)$-separated in~$Y$}.
\end{defn}

Such numbers are in fact closely related to each other, as can be seen in the following.

\begin{lem}[Dinh--Nguy\^{e}n--Sibony~{\cite[Proposition~3.1]{DNSI}}] \label{lienNN'M}In the above setup, we have
  \[N(Y,R,\eps)\leq N'(Y,R,\eps)\leq M(Y,R,\eps)\leq N(Y,R,\eps/2).\]
  \end{lem}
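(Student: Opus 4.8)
The plan is to prove the three inequalities separately; all of them are the classical comparison between spanning, covering and separated sets for the Bowen metric $d_R$, so the only thing needed beyond unwinding the definitions is that $d_R$ is symmetric and satisfies the triangle inequality. I would check these two facts first. Symmetry follows by replacing $\theta$ with $-\theta$ and performing the change of variable $\xi\mapsto e^{-i\theta}\xi$ in~\eqref{defdR}, which is legitimate because $\adhDR{R}$ is invariant under the rotations $\xi\mapsto e^{i\theta}\xi$. The triangle inequality follows in the same spirit: given near-optimal rotations $\theta_1$ for the pair $(x,y)$ and $\theta_2$ for $(y,z)$, one estimates $d_X(\phi_x(\xi),\phi_z(e^{i(\theta_1+\theta_2)}\xi))\leq d_X(\phi_x(\xi),\phi_y(e^{i\theta_1}\xi))+d_X(\phi_y(e^{i\theta_1}\xi),\phi_z(e^{i(\theta_1+\theta_2)}\xi))$, takes the supremum over $\xi\in\adhDR{R}$, uses the change of variable $\xi\mapsto e^{i\theta_1}\xi$ in the second term, and then lets $\theta_1,\theta_2$ run to the respective infima. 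In particular $B_R(x,\eps)=\{y\in X\,;\,d_R(x,y)<\eps\}$ behaves like an ordinary open metric ball.

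The first inequality $N(Y,R,\eps)\leq N'(Y,R,\eps)$ is purely formal: any $(R,\eps)$-dense family in $Y$ (centers in $Y$) is in particular an $(R,\eps)$-covering of $Y$ (centers in $X$), so the infimum defining $N(Y,R,\eps)$ is taken over a larger collection of admissible families than the one defining $N'(Y,R,\eps)$.

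For $N'(Y,R,\eps)\leq M(Y,R,\eps)$, I would take a family $\{x_1,\dots,x_M\}\subset Y$ realizing $M=M(Y,R,\eps)$, hence maximal with the separation property, and argue that it is automatically $(R,\eps)$-dense in $Y$. Indeed, if some $y\in Y$ lay outside every $B_R(x_i,\eps)$, then adding $y$ to the family would not destroy the separation property, contradicting maximality; hence $Y\subset\bigcup_{i=1}^{M}B_R(x_i,\eps)$, which exhibits an $(R,\eps)$-dense subset of $Y$ of cardinality $M$, so $N'(Y,R,\eps)\leq M$. Finally, for $M(Y,R,\eps)\leq N(Y,R,\eps/2)$, I would fix a separated family $\{x_1,\dots,x_M\}\subset Y$ with $M=M(Y,R,\eps)$ together with an $(R,\eps/2)$-covering $\{y_1,\dots,y_N\}$ of $Y$ with $N=N(Y,R,\eps/2)$, and apply the pigeonhole principle: each $x_i$ lies in some $B_R(y_{j(i)},\eps/2)$, and if $j(i)=j(i')$ for $i\neq i'$ then the triangle inequality gives $d_R(x_i,x_{i'})\leq d_R(x_i,y_{j(i)})+d_R(y_{j(i)},x_{i'})<\eps/2+\eps/2=\eps$, so $y_{j(i)}\in B_R(x_i,\eps)\cap B_R(x_{i'},\eps)$, contradicting the separation of $\{x_1,\dots,x_M\}$. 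Thus $i\mapsto j(i)$ is injective and $M\leq N$.

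I do not expect any genuine obstacle: this is a routine metric-space lemma once $d_R$ is known to be a (pseudo)metric. The only points deserving a line of care are the verification of the triangle inequality for $d_R$ — where one must use that the supremum in~\eqref{defdR} ranges over the rotation-invariant set $\adhDR{R}$, so that the change of variable $\xi\mapsto e^{i\theta}\xi$ is available — and the bookkeeping of radii, since it is precisely the halving $\eps\mapsto\eps/2$ in the last term that makes the pigeonhole step work.
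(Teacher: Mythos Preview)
The paper does not supply its own proof --- the lemma is quoted from~\cite[Proposition~3.1]{DNSI} --- so there is nothing to compare against; your argument is the standard one and is correct for the usual meaning of ``$(R,\eps)$-separated'', namely $d_R(x_i,x_j)\geq\eps$ for $i\neq j$.

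One caveat is worth recording. Definition~\ref{defnN(Y,R,E)} in the paper phrases separation as ``the balls $B_R(x_i,\eps)$ are pairwise \emph{disjoint}'', which is a strictly stronger requirement. Under that reading your step ``$y\notin\bigcup_i B_R(x_i,\eps)$ implies that $\{x_1,\dots,x_M,y\}$ is still separated'' does not go through, since you would need $B_R(y,\eps)\cap B_R(x_i,\eps)=\emptyset$ rather than merely $d_R(y,x_i)\geq\eps$; and in fact with that convention the middle inequality $N'\leq M$ can fail for a general metric space (on the circle $\set{R}/3\set{Z}$ with arc-length distance and $\eps=1.1$, any two $\eps$-balls meet so $M=1$, while a single ball does not cover so $N'=2$). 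This is a wording quirk of the paper rather than a flaw in your reasoning: the two conventions for~$M$ are sandwiched by a halving of~$\eps$ and therefore give the same entropy, and your third inequality $M\leq N(\eps/2)$ --- where you exhibit $y_{j(i)}\in B_R(x_i,\eps)\cap B_R(x_{i'},\eps)$ --- is valid for both conventions.
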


\begin{defn}[See~{\cite[Section~3]{DNSI}}] For $Y\subset X$, the \emph{hyperbolic entropy of~$Y$} is defined as the quantity
  \begin{equation}\label{defentropie}h(Y)=\sup_{\eps>0}\limsup_{R\to+\infty}\frac{1}{R}\log N(Y,R,\eps).\end{equation}

  For $Y=X$, we denote it by $h(\fol)$. By Lemma~\ref{lienNN'M}, the same equivalent definition could have been done with the numbers $N'(Y,R,\eps)$ or $M(Y,R,\eps)$.
\end{defn}

\begin{rem} The above entropy will sometimes be referred as a \emph{topological} entropy, as opposed to \emph{measure-theoretic} entropy (see Section~\ref{secmesharm}). As we will see in Proposition~\ref{pasinvdiff}, it is not necessarily invariant under homeomorphisms, so it may not be the best choice of terminology.

  On the other hand, for compact laminations, it is easy to check that it is independent on the choice of the Riemannian metric~$\metm{M}$. Also, it is invariant under applications which are transversally continuous and leafwise biholomorphic.
\end{rem}

\begin{rem} \label{gendefentropie} This setup can be made much more general by considering a metric space~$X$ and a family of distances $(d_R)_{R\geq0}$ on~$X$. In practise, we suppose that $d_0=d_X$, that $d_R$ increases with~$R$ and most of the time, the distances~$d_R$ are continuous with respect to~$d_X$. Then, there is a notion of $d_R$-balls of radius~$\eps$, of covering, dense and separated subsets. Lemma~\ref{lienNN'M} still holds and one can define an entropy for this family of distances by the formula~\eqref{defentropie}. See~\cite[Section~3]{DNSI} for various dynamically meaningful setups. This is something we will constantly do, for families of distances that will induce the same entropy (or a comparable one) as the one of~$d_R$. See for example Subsection~\ref{subsecweight} or Remark~\ref{remhh'}.
\end{rem}

\subsection{Suspensions}\label{subsecsusp}

Consider a compact smooth Riemann surface~$\Sigma$ of genus~$g\geq2$ and denote by $\Gamma=\pi_1(\Sigma)$ its fundamental group. More details about the structure of~$\Gamma$ will be recalled in the next subsection. For now, consider a representation~$\rho\colon\Gamma\to\Homeo(T)$, where~$T$ is a compact metric space. Then,~$\Gamma$ acts diagonally on $\set{D}\times T$ by
\[\Gamma\times(\set{D}\times T)\to\set{D}\times T,\qquad\gamma\cdot(z,t)=(\gamma\cdot z,\rho(\gamma)(t)),\]
where $\gamma\cdot z$ is the action by deck transformations of $\gamma\in\pi_1(\Sigma)$ on a fixed uniformization~$\set{D}\to\Sigma$. Since the action of~$\Gamma$ on~$\set{D}$ is free, properly discontinuous and cocompact, it is also such on~$\set{D}\times T$. Therefore, if we define
\[X=\left(\set{D}\times T\right)/\Gamma,\]
then~$X$ is compact. We will suppose that it is embedded in a real manifold~$M$ (this is the case if for example~$T$ is itself a real manifold), to inherit a well-chosen Riemannian structure. For $A\subset\set{D}\times T$, denote by $[A]$ its image under the quotient map $\set{D}\times T\to X$. Then, $X$ is laminated by the hyperbolic Riemann surfaces $[\set{D}\times\{t\}]$, for $t\in T$. All these leaves are coverings of~$\Sigma$. Note $\fol=\nspllam{}$ this lamination. It is called the \emph{suspension of the representation~$\rho$}.

Above, we presented suspensions in the context that is necessary to define our entropy, that is, the one of hyperbolic leaves. The reader can consult~\cite[Chapter~V, \S4]{CLNgeomfol} for a broader definition. In our setup, Ghys~\cite[pp.~51--52]{Ghys} (see also~\cite[Example~2.31]{surVANG21}) shows that when~$T=\set{P}^1$ and $\rho(\Gamma)\subset\Aut(\set{P}^1)$, then~$X$ is an algebraic surface that can be embedded in~$\set{P}^N$, for some $N\geq3$.

From the point of view of the entropy, suspensions constructed as above have the important and simple property that the covering maps of the leaves are completely explicit. This will allow us to have good estimates of the entropy, in terms of the representation~$\rho$.

More precisely, denote by $\set{T}_0=[\{0\}\times T]$. This is a global transversal to the lamination~$\fol$, which is isomorphic to~$T$, and that will be called the \emph{distinguished transversal}. We will sometimes make the confusion between a point $t\in T$ and the point $[0,t]\in\set{T}_0$. For $t\in T$, denote by $\phi_t$ the uniformization of the leaf $\leafu{t}$ passing through~$t$ given by
\begin{equation}\label{defphit}\phi_t\colon\set{D}\to\leafu{t};\qquad z\mapsto[z,t].\end{equation}
It is of course such that $\phi_t(0)=t$ (with the abuse $t\simeq[0,t]$), as in the notations of Subsection~\ref{subsechypent}.

\section{Some hyperbolic geometry}\label{sechyp}

For this section, our main references are Katok's~\cite{KatokFuchs} and Farb--Margalit's~\cite{Farb} books. We will only focus on the amount of hyperbolic geometry needed for further estimates of the entropy. Also, on the Fuchsian group's side, we will only consider those which are fundamental groups of smooth compact Riemann surfaces. The reader is invited to consult the aforementioned books for a more complete presentation.

\subsection{Cocompact Fuchsian groups} \label{subsecFuchs}

In this paragraph, we recall what is a Fuchsian group, with a focus on those which are fundamental groups of compact smooth hyperbolic Riemann surfaces.

\begin{defn} A \emph{Fuchsian group} is a subgroup~$\Gamma$ of $\Aut(\set{D})$ that acts properly and discontinuously on~$\set{D}$, that is, for every compact~$K$ of~$\set{D}$, $\{\alpha\in\Gamma~;~\alpha(K)\cap K\neq\emptyset\}$ is finite.

  If~$\Gamma$ is a Fuchsian group, the quotient $\Sigma=\set{D}/\Gamma$ is a hyperbolic Riemann surface (with possibly orbifold singularities). We say that~$\Gamma$ is \emph{cocompact} if~$\Sigma$ is compact.
\end{defn}

  Moreover, $\Sigma$ is smooth if and only if every element of~$\Gamma$ besides the identity has no fixed point in~$\set{D}$ and in that case, $\pi_1(\Sigma)=\Gamma$ (see~\cite[\S3.6]{KatokFuchs}). Conversely, let~$\Sigma$ be a compact smooth hyperbolic Riemann surface and let $\phi\colon\set{D}\to\Sigma$ be a uniformization of~$\Sigma$. Then, one can write $\Sigma=\set{D}/\Gamma$, with~$\Gamma=\pi_1(\Sigma)$ being a Fuchsian group acting by deck transformations on~$\set{D}$, and every element of~$\Gamma$ besides the identity having no fixed point.

  The next results collect some very well-known facts we will use about Fuchsian groups.

  \begin{prop}[{\cite[Corollary~4.2.3]{KatokFuchs}}] With the notations above, $\Sigma$ is compact if and only if~$\Gamma$ has a compact fundamental domain~$D$.
  \end{prop}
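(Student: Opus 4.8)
This is the classical characterization of cocompactness through Dirichlet domains, which we merely recall. The plan is to exhibit one explicit fundamental domain --- the Dirichlet domain centered at a point --- and to observe that its closure is compact exactly when $\Sigma$ is.

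Fix any $p\in\set{D}$; since in the situation at hand every non-trivial element of~$\Gamma$ acts freely, any such $p$ is admissible, and one sets
\[D_p=\left\{z\in\set{D}~:~\dPC{z}{p}\le\dPC{z}{\alpha(p)}\ \text{ for every }\alpha\in\Gamma\right\}.\]
Proper discontinuity makes the orbit $\Gamma p$ locally finite, so $D_p$ is a closed convex region, namely an intersection of hyperbolic half-planes (``being closer to $p$ than to $\alpha(p)$'') that is locally a finite intersection; the standard verification, for which we refer to~\cite[\S3.2]{KatokFuchs}, then shows that the $\Gamma$-translates of the interior of $D_p$ are pairwise disjoint while the $\Gamma$-translates of $D_p$ cover~$\set{D}$. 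In other words, $D_p$ is a fundamental domain, and it suffices to prove that $D_p$ is compact if and only if $\Sigma$ is.

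One implication holds for \emph{any} fundamental domain: if $\Gamma$ has a compact fundamental domain~$D$, then $\adh{D}$ is compact with $\Gamma$-translates covering $\set{D}$, so the quotient projection $\pi\colon\set{D}\to\Sigma=\set{D}/\Gamma$ maps the compact set $\adh{D}$ onto $\Sigma$, whence $\Sigma$ is compact. Conversely, assume $\Sigma$ compact; since $\Gamma$ acts freely and properly discontinuously, $\pi$ is a covering map. If $D_p$ were not compact it would be unbounded for the Poincaré metric --- it is closed in the complete space~$\set{D}$, so by Hopf--Rinow non-compactness forces unboundedness --- and we could choose $z_n\in D_p$ with $\dPC{p}{z_n}\to+\infty$. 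Passing to a subsequence, $\pi(z_n)$ converges in $\Sigma$ to some point~$q$; lifting through an evenly covered neighborhood of~$q$ provides elements $\alpha_n\in\Gamma$ and a lift $\tilde q$ of~$q$ with $\alpha_n(z_n)\to\tilde q$, so that $\dPC{p}{\alpha_n(z_n)}$ stays bounded. But applying the isometry $\alpha_n$ to the defining inequality $\dPC{z_n}{p}\le\dPC{z_n}{\alpha_n^{-1}(p)}$ yields $\dPC{z_n}{p}\le\dPC{\alpha_n(z_n)}{p}$, contradicting $\dPC{p}{z_n}\to+\infty$. Hence $D_p$ is compact, and $\Gamma$ admits a compact fundamental domain.

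No genuine difficulty is expected. The two points deserving attention are the verification that the Dirichlet region is indeed a fundamental domain --- a direct consequence of proper discontinuity --- and the lifting step above, which uses that $\pi$ is an honest covering map, legitimate because the groups considered here are torsion-free.
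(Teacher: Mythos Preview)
Your argument is correct and is essentially the classical Dirichlet-domain proof one finds in Katok's book. Note, however, that the paper does not give any proof of this proposition: it is stated as a citation of~\cite[Corollary~4.2.3]{KatokFuchs} and left without argument. So there is nothing to compare on the paper's side beyond the reference; your write-up simply supplies the standard proof that the paper chose to omit.
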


  \begin{prop}[{\cite[10.4.1]{Farb}}]\label{presFuchs} Consider two smooth compact Riemann surfaces $\Sigma_i=\set{D}/\Gamma_i$ of genus $g_i\geq2$, for $i\in\{1,2\}$. Then, $\Sigma_1$ and $\Sigma_2$ are homeomorphic if and only if they are diffeomorphic, if and only if $\Gamma_1$ and $\Gamma_2$ are group-isomorphic, if and only if $g=g_1=g_2$. In that case, the $\Gamma_i$ have the presentation
    \[\Gamma_i=\left\langle\alpha_1,\dots,\alpha_g,\beta_1,\dots,\beta_g~\vert~\alpha_1\beta_1\alpha_1^{-1}\beta_1^{-1}\dots\alpha_g\beta_g\alpha_g^{-1}\beta_g^{-1}=\id\right\rangle.\]
  \end{prop}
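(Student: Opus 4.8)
I would prove the statement as a chain of implications
\[
(g_1=g_2)\ \Longrightarrow\ \Sigma_1\cong\Sigma_2\ (\text{diffeo})\ \Longrightarrow\ \Sigma_1\cong\Sigma_2\ (\text{homeo})\ \Longrightarrow\ \Gamma_1\cong\Gamma_2\ \Longrightarrow\ (g_1=g_2),
\]
and then separately exhibit the presentation. The setup is already in place: by the discussion preceding the statement, each $\Sigma_i=\set{D}/\Gamma_i$ is a closed, orientable, smooth surface with $\pi_1(\Sigma_i)=\Gamma_i$, and since $\set{D}$ is contractible, $\Sigma_i$ is aspherical, i.e.\ a $K(\Gamma_i,1)$; in particular its homotopy type — and, as I will argue, its genus — depends only on $\Gamma_i$.

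The first implication is the classification theorem for closed orientable surfaces: every such surface is diffeomorphic to a sphere with $g$ handles, and the genus $g$ is a complete invariant, so equal genus yields a diffeomorphism. In dimension two the smooth, $\mathrm{PL}$ and topological categories coincide (Radó–Moise), so ``diffeomorphic'' and ``homeomorphic'' are interchangeable here; this gives the (trivial) second implication and is reused below. The third implication is just functoriality of~$\pi_1$ under homeomorphisms.

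For the last implication I would recover the genus from the abstract group by abelianising. From the polygonal model described below, $H_1(\Sigma_g;\set{Z})=\Gamma_g^{\mathrm{ab}}\cong\set{Z}^{2g}$, and isomorphic groups have isomorphic abelianisations, so $\Gamma_1\cong\Gamma_2$ forces $2g_1=2g_2$, hence $g_1=g_2$. (Equivalently one can invoke the Euler characteristic $\chi(\Sigma_g)=2-2g$, which is read off from the one-relator presentation and is therefore an invariant of the group for these aspherical spaces.) Finally, for the presentation itself: a closed orientable surface of genus $g$ is obtained from a $4g$-gon by identifying boundary edges along the word $\alpha_1\beta_1\alpha_1^{-1}\beta_1^{-1}\cdots\alpha_g\beta_g\alpha_g^{-1}\beta_g^{-1}$; the resulting CW structure has one $0$-cell, $2g$ $1$-cells (the loops $\alpha_i,\beta_i$) and a single $2$-cell glued along that word, so van Kampen's theorem delivers exactly the stated presentation with relator equal to $\id$. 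Since $g=g_1=g_2$ and $\Gamma_i=\pi_1(\Sigma_g)$, both $\Gamma_i$ admit it.

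The only genuinely substantive ingredient — everything else being formal or a direct citation — is the classification of closed surfaces, which ultimately rests on the triangulability of surfaces. In the Fuchsian setting one could instead obtain the polygonal model intrinsically, by taking a Dirichlet fundamental domain for the action of $\Gamma_i$ on $\set{D}$ and analysing its side-pairing transformations; I expect this, rather than the homological bookkeeping, to be the part requiring the most care.
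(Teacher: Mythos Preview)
Your argument is correct and is the standard textbook route: classification of closed orientable surfaces for the forward direction, abelianisation $H_1\cong\set{Z}^{2g}$ to recover the genus from the abstract group, and van Kampen on the $4g$-gon for the presentation. Note, however, that the paper does not prove this proposition at all: it is stated with a bare citation to Farb--Margalit~\cite[10.4.1]{Farb} and used as a black box, so there is no ``paper's own proof'' to compare against. Your write-up is essentially what one finds in that reference (or in any introductory text on mapping class groups), and the closing remark about Dirichlet domains is apt but not needed here since the paper is content to quote the result.
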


  Now, we prove a simple result that will be useful in Subsection~\ref{subseccsq} to provide examples of diffeomorphic suspensions having different entropies.
  
\begin{prop}\label{existdegFuchs} Let $g\in\set{N}$ with $g\geq2$ and $\eps>0$. Then, there exists a Fuchsian group~$\Gamma_{\eps}$, with $\set{D}/\Gamma_{\eps}$ a compact smooth Riemann surface of genus~$g$, a set of generators $\mathcal{G}_{\eps}$ of~$\Gamma_{\eps}$ with $\mathcal{G}_{\eps}=\mathcal{G}_{\eps}^{-1}$, $\id\notin\mathcal{G}_{\eps}$, and an element $\alpha_{\eps}\in\mathcal{G}_{\eps}$ such that $\dPC{0}{\alpha_{\eps}(0)}\leq\eps$.
\end{prop}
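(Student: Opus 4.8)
The plan is to build, for each $\eps>0$, a hyperbolic metric of genus $g$ carrying an arbitrarily short closed geodesic, and to take for $\alpha_{\eps}$ the deck transformation that this geodesic represents, after conjugating the group so that its translation axis passes through $0$. Concretely, I would first recall the classical fact that on the closed orientable surface $\Sigma$ of genus $g$ there is, for every $\ell>0$, a hyperbolic metric possessing a simple closed geodesic of length $\ell$: fix a pants decomposition of $\Sigma$ and use the Fenchel--Nielsen parametrization of its Teichmüller space (see e.g.\ \cite{Farb,KatokFuchs}), which allows the length of one chosen decomposition curve to be prescribed equal to $\ell$ with all remaining length and twist parameters arbitrary --- equivalently, glue $2g-2$ hyperbolic pairs of pants, all of whose boundary geodesics have length $\ell$, along a gluing pattern yielding a genus-$g$ surface. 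Applying this with $\ell=\eps/2$ and uniformizing the result as $\Sigma=\set{D}/\Gamma$, the group $\Gamma=\pi_1(\Sigma)$ is a cocompact Fuchsian group every non-identity element of which acts freely, and the short geodesic corresponds to a conjugacy class of hyperbolic elements; picking one such $\alpha\in\Gamma$, its translation length for the Poincaré metric on $\set{D}$ equals $\eps/2$.

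Next I would remove the dependence on the position of the axis. Choosing $h\in\Aut(\set{D})$ that maps a point of the axis of $\alpha$ to $0$, I set $\Gamma_{\eps}=h\Gamma h^{-1}$ and $\alpha_{\eps}=h\alpha h^{-1}$: then $\Gamma_{\eps}$ is again a cocompact Fuchsian group acting freely, so $\set{D}/\Gamma_{\eps}$ is a compact smooth Riemann surface, biholomorphic to $\Sigma$ and hence of genus $g$; and the axis of $\alpha_{\eps}$ now passes through $0$, so $\dPC{0}{\alpha_{\eps}(0)}$ equals the translation length of $\alpha_{\eps}$, namely $\eps/2\le\eps$. Finally, $\Gamma_{\eps}$ is finitely generated, being a surface group, so I would take any finite generating set, discard $\id$ if it appears, adjoin $\alpha_{\eps}$ and $\alpha_{\eps}^{-1}$, and close up under inverses; this gives a set $\mathcal{G}_{\eps}$ with $\mathcal{G}_{\eps}=\mathcal{G}_{\eps}^{-1}$, $\id\notin\mathcal{G}_{\eps}$, $\alpha_{\eps}\in\mathcal{G}_{\eps}$ and $\langle\mathcal{G}_{\eps}\rangle=\Gamma_{\eps}$, as wanted.

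I do not expect a genuine obstacle. The only ingredient that is not completely elementary is the existence, in a fixed genus, of hyperbolic surfaces with arbitrarily short closed geodesics, which is standard Teichmüller/pants-decomposition theory; the rest --- for a hyperbolic isometry $\beta$ of $\set{D}$ with axis $A$, the translation length of $\beta$ equals the length of the corresponding closed geodesic and coincides with $\dPC{0}{\beta(0)}$ exactly when $0\in A$, while conjugation leaves the biholomorphism type of the quotient of a Fuchsian group unchanged --- is recalled in \cite{KatokFuchs,Farb}. The one point worth stressing is that, by Proposition~\ref{presFuchs}, all the $\Gamma_{\eps}$ are abstractly isomorphic to the genus-$g$ surface group, which is precisely what makes this family useful in Subsection~\ref{subseccsq} for producing diffeomorphic suspensions with different entropies.
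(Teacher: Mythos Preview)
Your argument is correct but follows a different route from the paper's. The paper works by hand: it builds an explicit convex $4g$-gon in $\set{D}$ with two ``wide'' sides (subtending a large angle $\nu$ close to $\pi$) and $4g-2$ ``narrow'' sides, adjusts the circumradius via the intermediate value theorem so that the interior angle sum is $2\pi$, and then invokes Poincar\'e's polygon theorem to obtain a genus-$g$ surface group. The side-pairing that identifies the two wide sides is the short generator, and a direct estimate on the midpoints of those sides shows its displacement of $0$ tends to $0$ as $\nu\to\pi$. Your approach instead imports the Fenchel--Nielsen parametrization to produce a short simple closed geodesic, then conjugates so that its axis passes through $0$. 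This is conceptually cleaner and immediately yields the translation-length equality $\dPC{0}{\alpha_{\eps}(0)}=\ell(\alpha_{\eps})$, at the cost of citing a heavier piece of Teichm\"uller theory; the paper's polygon argument is more self-contained and stays within elementary hyperbolic trigonometry, but requires the explicit geometric computation. One small remark for the downstream application in Subsection~\ref{subseccsq}: there one needs $\alpha_{\eps}$ to play the role of a standard generator $\alpha_1$ under a diffeomorphism, so in your construction you should make sure to pick a \emph{non-separating} curve in the pants decomposition (always possible), so that the change-of-coordinates principle for mapping class groups applies.
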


\begin{proof} The proof goes as for Poincaré's theorem~\cite[Theorem~3.4.2]{KatokFuchs}. Consider a convex $4g$-gone, bounded by geodesics, the vertex of which are $re^{i\theta_j}$, $j\in\intent{1}{4g}$, with $\theta_1=\mu/2$, $\theta_2=\mu/2+\nu$, $\theta_{j+1}=\theta_j+\mu$, $j\in\intent{2}{4g-2}$, $\theta_{4g}=\theta_{4g-1}+\nu=2\pi-\mu/2$ (see Figure~\ref{figdegFuchs}). In other words, we choose one big angle~$\nu$ and one small angle~$\mu$, with $2\nu+(4g-2)\mu=2\pi$, put $4g$ points on a circle with two pairs of them at angle~$\nu$ and the others at angle~$\mu$, so that the two big geodesics are separated by only one geodesic. Denote by $A_j=re^{i\theta_j}$ and by~$D$ the closed~$4g$-gone bounded by the geodesics $A_jA_{j+1}$, $j\in\intent{1}{4g}$, with $A_{4g+1}=A_1$. Now, as for Poincaré theorem, the sum of the internal angles of~$D$ goes to $(4g-2)\pi$ when~$r$ goes to~$0$, and to~$0$ when~$r$ goes to~$1$. So, there is an~$r\in\intoo{0}{1}$ such that it is~$2\pi$, and the usual identification of the sides of~$D$ defines a Fuchsian group (see~\cite[10.4.2]{Farb}).

  We wish to show that it satisfies our condition. Denote by~$\alpha_0$ the generator sending the geodesic~$(A_{4g-1}A_{4g})$ to the geodesic~$(A_1A_2)$. It is enough to prove that for well chosen~$\mu$ and~$\nu$, $\dPC{0}{\alpha_0(0)}<\eps$. First, let us estimate this distance in terms of these angles. Note~$B_1$ the middle of the geodesic $[A_{4g-1}A_{4g}]$ and $B_2$ the middle of $[A_1A_2]$. Then $\alpha_0(B_1)=B_2$ and since $B_1$ and~$B_2$ are complex conjugate,
  \begin{equation}\label{dPC0alpha0}\dPC{0}{\alpha_0(0)}\leq\dPC{0}{B_2}+\dPC{\alpha_0(B_1)}{\alpha_0(0)}=2\dPC{0}{B_2}.\end{equation}
  In fact, we even have the equality but we do not need it. Now, $B_2$ is a point $r'e^{i\left(\mu+\nu\right)/2}$ and is in the Euclidean triangle $0A_{1}A_{2}$, which is itself contained in the Euclidean triangle, the vertex of which are $0$, $e^{i\mu/2}$, $e^{i(\mu/2+\nu)}$. By elementary trigonometry, $r'\leq\cos(\nu/2)$ and if~$\nu$ is sufficiently close to~$\pi$, this implies that $\dPC{0}{B_2}<\frac{\eps}{2}$. Equation~\eqref{dPC0alpha0} then gives the result and concludes the proof. 
\end{proof}

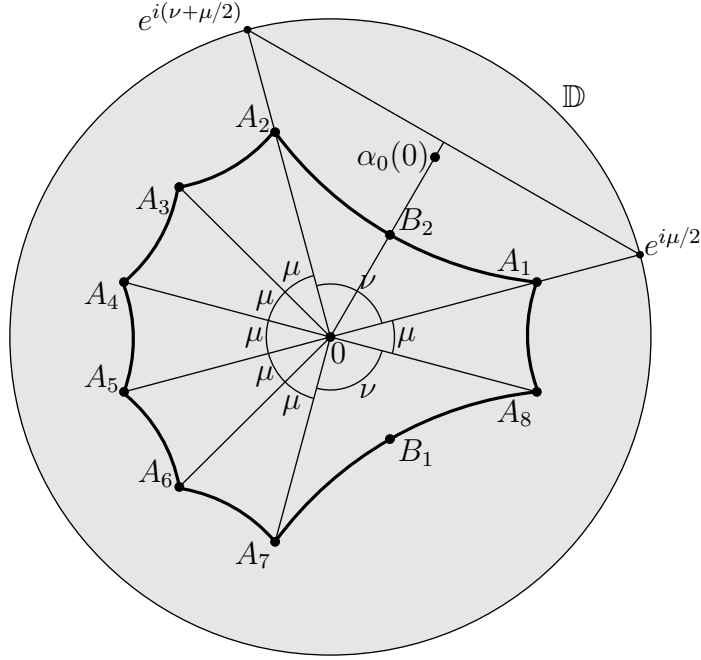
\begin{figure}[tbh]
  \centering
  \def \globalscale{10}
\begin{tikzpicture}[y=0.80pt, x=0.80pt, yscale=\globalscale, xscale=\globalscale, inner sep=0pt, outer sep=0pt,line width=0.5pt,draw=black]
\draw[white] (10.6066,10.6066) node[above right,black,xshift=2,yshift=2] {$\set{D}$} circle (0.1);
\draw[white] (0,0) node[right,black,xshift=25] {$\mu$} circle (0.1);
\draw[white] (0,0) node[left,black,xshift=-25] {$\mu$} circle (0.1);
\draw[white] (0,0) node[above left,black,xshift=-21,yshift=10] {$\mu$} circle (0.1);
\draw[white] (0,0) node[below left,black,xshift=-21,yshift=-11] {$\mu$} circle (0.1);
\draw[white] (0,0) node[above left,black,xshift=-11,yshift=21] {$\mu$} circle (0.1);
\draw[white] (0,0) node[below left,black,xshift=-11,yshift=-22] {$\mu$} circle (0.1);
\draw[white] (0,0) node[above,black,yshift=18,xshift=14] {$\nu$} circle (0.1);
\draw[white] (0,0) node[below,black,yshift=-18,xshift=14] {$\nu$} circle (0.1);
\filldraw[black,fill opacity=0.1] (0,0) circle (15);
\draw (2.8978,-0.7764) arc (-15:15:3);
\draw (-0.7764,2.8978) arc (105:255:3);
\draw (2.4148,0.6470) arc (15:105:2.5);
\draw (-0.6470,-2.4148) arc (-105:-15:2.5);
\draw (-9.659,-2.588) -- (14.4885,3.882);
\draw (-9.659,2.588) -- (9.659,-2.588);
\draw (-7.071,-7.071) -- (0,0);
\draw (-7.071,7.071) -- (0,0);
\draw (-2.588,-9.659) -- (0,0);
\draw (-3.882,14.4885) -- (0,0);
\draw (-3.882,14.4885) -- (14.4885,3.882);
\draw (5.30325,9.18525) -- (0,0);
\filldraw[black] (4.8953,8.479) node[left,xshift=-2] {$\alpha_0(0)$} circle (0.2);
\filldraw[black] (0,0) node[below right,yshift=-2] {$0$} circle (0.2);
\filldraw[black] (9.659,2.588) node[above left,xshift=-1.5,yshift=2.5] {$A_1$} circle (0.2);
\filldraw[black] (9.659,-2.588) node[below left,xshift=-1.5,yshift=-3] {$A_8$} circle (0.2);
\filldraw[black] (-9.659,2.588) node[below left,xshift=-1.5] {$A_4$} circle (0.2);
\filldraw[black] (-9.659,-2.588) node[above left,xshift=-1.5] {$A_5$} circle (0.2);
\filldraw[black] (-7.071,-7.071) node[above left,xshift=-2] {$A_6$} circle (0.2);
\filldraw[black] (-7.071,7.071) node[below left,xshift=-3] {$A_3$} circle (0.2);
\filldraw[black] (-2.588,9.659) node[above left,xshift=-1.5] {$A_2$} circle (0.2);
\filldraw[black] (-2.588,-9.659) node[below left,xshift=-1.5] {$A_7$} circle (0.2);
\filldraw[black] (14.4885,3.882) node[above right,xshift=1.5] {$e^{i\mu/2}$} circle (0.15);
\filldraw[black] (-3.882,14.4885) node[above left,xshift=-1.5] {$e^{i(\nu+\mu/2)}$} circle (0.15);
\draw[line width=1.2pt] (9.659,2.588) arc (160.1361:199.8639:7.3576);
\draw[line width=1.2pt] (-9.659,-2.588) arc (-19.8639:19.8639:7.3576);
\draw[line width=1.2pt] (-9.659,2.588) arc (-49.8639:-10.1361:7.3576);
\draw[line width=1.2pt] (-7.071,-7.071) arc (10.1361:49.8639:7.3576);
\draw[line width=1.2pt] (-7.071,7.071) arc (-79.8639:-40.1361:7.3576);
\draw[line width=1.2pt] (-2.588,-9.659) arc (40.1361:79.8639:7.3576);
\draw[line width=1.2pt] (-2.588,9.659) arc (216.0375:263.9625:17.4105);
\draw[line width=1.2pt] (9.659,-2.588) arc (-263.9625:-216.0375:17.4105);
\filldraw[black] (2.7853,4.8141) node[above right,xshift=3] {$B_2$} circle (0.2);
\filldraw[black] (2.7853,-4.8141) node[below right,xshift=3] {$B_1$} circle (0.2);
\end{tikzpicture}
  \caption{\label{figdegFuchs} The polygon~$D$, for some~$r\in\intoo{0}{1}$ and~$g=2$, in the proof of Proposition~\ref{existdegFuchs}.}
  \end{figure}







\subsection{Covering estimates}\label{subseccover}

In this subsection, we give estimates that allow to compare hyperbolic distance with word distance in a Fuchsian group with generators. Our main result is the following.

\begin{prop}\label{propcovest} Let $\Gamma$ be a Fuchsian group with $\set{D}/\Gamma$ being a smooth compact Riemann surface of genus $g\geq2$. Denote by $D$ a compact fundamental domain of~$\Gamma$, with $0\in D$, and by $\mathcal{G}$ a set of generators such that $\mathcal{G}=\mathcal{G}^{-1}$. Define $R_0=\max\{\dPC{0}{\alpha(0)}~;~\alpha\in\mathcal{G}\}$. Then, there exists $K=K(\Gamma)>1$, such that for any $\delta\in\intoo{0}{K^{-1}}$ and for $N\in\set{N}$ sufficiently large,
  \[\set{D}_{(K^{-1}-\delta)N}\subset\bigcup\limits_{\underset{n\leq N}{\alpha_1,\dots,\alpha_n\in\mathcal{G}}}\alpha_1\dots \alpha_n(D)\subset\set{D}_{N(R_0+\delta)}.\]
\end{prop}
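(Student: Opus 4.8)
The plan is to prove the two inclusions separately, both relying on the fact that the fundamental domain $D$ is compact, hence has finite hyperbolic diameter $\diam_{\PC}(D)=:\Delta<\infty$, and on the Švarc–Milnor lemma, which says that the orbit map $\Gamma\to\set{D}$, $\alpha\mapsto\alpha(0)$, is a quasi-isometry between $(\Gamma,d_{\mathcal{G}})$ (word metric for the generating set $\mathcal{G}$) and $(\set{D},d_{\PC})$. Concretely, there is a constant $K=K(\Gamma)>1$ such that
\[
K^{-1}\,d_{\mathcal{G}}(\id,\alpha)-K\;\leq\;\dPC{0}{\alpha(0)}\;\leq\;K\,d_{\mathcal{G}}(\id,\alpha)+K,\qquad\alpha\in\Gamma.
\]
(One gets this directly: the upper bound is the triangle inequality $\dPC{0}{\alpha_1\cdots\alpha_n(0)}\leq\sum_i\dPC{0}{\alpha_i(0)}\leq nR_0$, which I will use in its sharp form below; the lower bound is the content of Švarc–Milnor and uses cocompactness.) I will also note the \emph{translate-covers-$\set{D}$} fact: since $D$ is a fundamental domain, $\bigcup_{\alpha\in\Gamma}\alpha(D)=\set{D}$, and $0\in D$.

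\emph{The right-hand inclusion.} This one is elementary and does not even need $\delta$ to be small. If $\xi\in\alpha_1\cdots\alpha_n(D)$ with $n\leq N$ and all $\alpha_i\in\mathcal{G}$, write $\xi=(\alpha_1\cdots\alpha_n)(y)$ with $y\in D$. Then
\[
\dPC{0}{\xi}\;\leq\;\dPC{0}{(\alpha_1\cdots\alpha_n)(0)}+\dPC{(\alpha_1\cdots\alpha_n)(0)}{(\alpha_1\cdots\alpha_n)(y)}\;=\;\dPC{0}{(\alpha_1\cdots\alpha_n)(0)}+\dPC{0}{y},
\]
using that $\Gamma$ acts by isometries. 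The first term is at most $\sum_{i=1}^n\dPC{0}{\alpha_i(0)}\leq nR_0\leq NR_0$, and $\dPC{0}{y}\leq\Delta$. So $\dPC{0}{\xi}\leq NR_0+\Delta$, and for $N$ large enough $\Delta\leq N\delta$, giving $\xi\in\set{D}_{N(R_0+\delta)}$. (If one prefers, absorb $\Delta$ into the choice of $K$ so the statement reads cleanly; the point is just that $\diam_{\PC}(D)$ is a fixed finite constant.)

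\emph{The left-hand inclusion.} Here is where Švarc–Milnor does the real work. Let $\xi\in\set{D}_{(K^{-1}-\delta)N}$, i.e.\ $\dPC{0}{\xi}<(K^{-1}-\delta)N$. Since the $\Gamma$-translates of $D$ cover $\set{D}$, there is $\alpha\in\Gamma$ with $\xi\in\alpha(D)$, say $\xi=\alpha(y)$, $y\in D$. Then $\dPC{0}{\alpha(0)}\leq\dPC{0}{\xi}+\dPC{\xi}{\alpha(0)}=\dPC{0}{\xi}+\dPC{0}{y}\leq\dPC{0}{\xi}+\Delta$. Now apply the Švarc–Milnor lower bound to $\alpha$: with $n:=d_{\mathcal{G}}(\id,\alpha)$ the word length,
\[
K^{-1}n - K\;\leq\;\dPC{0}{\alpha(0)}\;\leq\;\dPC{0}{\xi}+\Delta\;<\;(K^{-1}-\delta)N+\Delta,
\]
hence $n< N - K\delta N + K(K+\Delta)\leq N$ once $N$ is large enough (depending on $\delta$, $K$, $\Delta$). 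Writing $\alpha=\alpha_1\cdots\alpha_n$ as a geodesic word in $\mathcal{G}$, we conclude $\xi\in\alpha_1\cdots\alpha_n(D)$ with $n\leq N$, which is exactly membership in the middle set. Choosing the same $K$ in both halves (enlarge it if necessary so that it works as the Švarc–Milnor constant \emph{and} dominates $1$) finishes the proof.

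\emph{Main obstacle.} The only nontrivial ingredient is the \emph{lower} bound $\dPC{0}{\alpha(0)}\geq K^{-1}d_{\mathcal{G}}(\id,\alpha)-K$, i.e.\ the Švarc–Milnor lemma for the cocompact action of $\Gamma$ on $\set{D}$ — equivalently, that a word of length $n$ cannot move $0$ a distance much smaller than linear in $n$. Everything else (the upper bound, the covering property, the role of $\diam_{\PC}(D)$) is a routine triangle-inequality bookkeeping. Since the paper has already recalled the relevant facts about cocompact Fuchsian groups and fundamental domains in Subsection~\ref{subsecFuchs}, I would either cite Švarc–Milnor directly (e.g.\ from Farb--Margalit~\cite{Farb}) or sketch the one-paragraph proof: cover the compact $D$ by finitely many translates, let $R_1$ be large enough that $\set{D}_{R_1}$ meets only those; then a path from $0$ to $\alpha(0)$ of hyperbolic length $\ell$ passes through at most $\sim\ell/R_1$ such translate-regions, and consecutive ones differ by a bounded-length word, giving $d_{\mathcal{G}}(\id,\alpha)\lesssim\ell/R_1$.
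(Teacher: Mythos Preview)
Your proof is correct and follows the same two-part strategy as the paper: the right inclusion via the triangle inequality plus $\diam_{\PC}(D)<\infty$ is identical (the paper calls this diameter $\delta_0$). For the left inclusion, you invoke the \v{S}varc--Milnor lemma to get the quasi-isometry lower bound $\dPC{0}{\alpha(0)}\geq K^{-1}d_{\mathcal{G}}(\id,\alpha)-K$ and then read off the word-length bound directly; the paper instead proves this lower bound by hand via an auxiliary lemma (Lemma~\ref{lemcovest}): it exhibits finitely many $\beta_1,\dots,\beta_p\in\Gamma$ such that for any $\xi$ with $\dPC{0}{\xi}>\delta_0+1$ some $\beta_k$ satisfies $\dPC{\beta_k(0)}{\xi}\leq\dPC{0}{\xi}-1$, writes each $\beta_k$ as a word of length $\leq K$ in $\mathcal{G}$, and inducts. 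This is precisely the standard proof of \v{S}varc--Milnor specialized to this action (and matches the sketch you give at the end), so the two arguments are equivalent in substance. The paper's version is self-contained and makes the constant $K$ explicit as $\max_k\ell(\beta_k)$; your version is cleaner to write but imports the lemma as a black box.
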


\begin{proof}[Beginning of the proof] Let us start with the right inclusion. Let $\delta_0$ be the diameter of $D$ and $N\in\set{N}$ be sufficiently large to have $N\delta>\delta_0$. Take $\alpha_1,\dots,\alpha_n\in\mathcal{G}$ for $n\leq N$, $\zeta\in D$, and denote by $\xi=\alpha_1\dots\alpha_n(\zeta)$. We need to show that $\dPC{0}{\xi}<N(R_0+\delta)$. Denote by $\xi_k=\alpha_1\dots\alpha_k(0)$, for $k\in\intent{0}{n}$, with $\xi_0=0$. Using triangle inequality,
  \[\dPC{0}{\xi}\leq\sum_{k=0}^{n-1}\dPC{\xi_k}{\xi_{k+1}}+\dPC{\xi_n}{\xi}=\sum_{k=0}^{n-1}\dPC{0}{\alpha_{k+1}(0)}+\dPC{0}{\zeta}.\]
  Above, we used that all the $\alpha_j$, $j\in\intent{1}{n}$, are isometries of the Poincar\'e distance. Now, by definition, all the terms of the sum are at most~$R_0$ and $\dPC{0}{\zeta}\leq\delta_0$. Hence,
  \[\dPC{0}{\xi}\leq nR_0+\delta_0<N(R_0+\delta),\]
  thanks to the above choice of~$N$. This concludes the proof of the right inclusion.
\end{proof}

\begin{rem} In Section~\ref{seccomp} (see Proposition~\ref{pasinvdiff}), we will encounter situations in which some generators~$\alpha\in\mathcal{G}$, on the one hand have very small distance $\dPC{0}{\alpha(0)}$, thanks to Proposition~\ref{existdegFuchs}; and on the other hand are ``the only ones to count''. Indeed, in the representation~$\rho$ giving a suspension, the other generators will be sent to the identity, so that we will be able not to mind them. In such a situation, we will need a refinement of the inclusion we just proved. Namely, consider $\alpha_1,\dots,\alpha_n\in\mathcal{G}$, $\omega_j=\dPC{0}{\alpha_j(0)}$, for $j\in\intent{1}{n}$. Then, with the same proof as above,
  \begin{equation}\label{rempropcovset}\alpha_1\dots \alpha_n(D)\subset\set{D}_{\sum_{j=1}^n\omega_j+\delta_0}.\end{equation}
\end{rem}

For the other inclusion, and for finding~$K(\Gamma)$, we need one lemma.

\begin{lem}\label{lemcovest} Recall that $\delta_0$ was defined as the diameter of~$D$, with the notations of Proposition~\ref{propcovest}. There exists a finite number $\beta_1,\dots,\beta_p$ of elements of~$\Gamma$ such that for any~$\xi\in\set{D}$ with $\dPC{0}{\xi}>\delta_0+1$,
  \[\dPC{\beta_k(0)}{\xi}\leq\dPC{0}{\xi}-1,\qquad\text{for some}~k\in\intent{1}{p}.\]
\end{lem}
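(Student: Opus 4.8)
The plan is to exploit cocompactness of $\Gamma$ to reduce the statement to a finite combinatorial fact about a compact annular region. First I would fix a compact fundamental domain $D$ containing $0$ with diameter $\delta_0$. The key observation is that the orbit $\Gamma\cdot 0$ is $(\delta_0)$-dense in $\set{D}$: every $\xi\in\set{D}$ lies in some translate $\gamma(D)$, so $\dPC{\gamma(0)}{\xi}\leq\delta_0$. This already gives that the desired property (moving $1$ closer to infinity by subtracting at least $1$ from the distance to $0$) is witnessed by \emph{some} element of $\Gamma$ whenever $\dPC{0}{\xi}$ is large enough; the real content is to get a \emph{uniform finite list} $\beta_1,\dots,\beta_p$ working for all such $\xi$ simultaneously.

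The main step is a compactness argument. Consider the radial projection / ``shadow'' structure: for $\xi$ with $\dPC{0}{\xi}>\delta_0+1$, look at a point $\xi'$ on the geodesic $[0,\xi]$ with $\dPC{0}{\xi'}=\dPC{0}{\xi}-1$ (slightly more than $1$, to absorb $\delta_0$, so take $\dPC{0}{\xi'}=\dPC{0}{\xi}-1$ but then correct). Pick $\gamma\in\Gamma$ with $\xi'\in\gamma(D)$, so $\dPC{\gamma(0)}{\xi'}\leq\delta_0$. Then, since $\xi'$ is on the geodesic from $0$ to $\xi$ and closer to $0$ by exactly $1$, we get $\dPC{\xi'}{\xi}=1$ and hence by the triangle inequality $\dPC{\gamma(0)}{\xi}\leq\delta_0+1$; this is the wrong direction. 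Instead I would arrange $\dPC{0}{\xi'}=\dPC{0}{\xi}-(\delta_0+1)$, so $\dPC{\gamma(0)}{\xi}\leq\dPC{\gamma(0)}{\xi'}+\dPC{\xi'}{\xi}\leq\delta_0+(\delta_0+1)$, still not $\leq\dPC{0}{\xi}-1$ unless we also bound $\dPC{0}{\gamma(0)}$ from below. The clean route: show $\dPC{\gamma(0)}{\xi}\leq\dPC{0}{\xi}-1$ directly. We have $\dPC{0}{\gamma(0)}\geq\dPC{0}{\xi'}-\dPC{\xi'}{\gamma(0)}\geq(\dPC{0}{\xi}-(\delta_0+1))-\delta_0=\dPC{0}{\xi}-2\delta_0-1$; combined with $\xi'$ lying between $0$ and $\xi$, a thin-triangles estimate in the hyperbolic plane gives $\dPC{\gamma(0)}{\xi}\leq\dPC{0}{\xi}-\dPC{0}{\gamma(0)}+2\delta_0+C\leq 4\delta_0+1+C$, a bounded quantity, which is what we want once $\dPC{0}{\xi}$ is large — but the conclusion needs a \emph{relative} bound, so the cleanest formulation is: $\dPC{\gamma(0)}{\xi}$ is bounded while $\dPC{0}{\xi}$ grows, hence for $\dPC{0}{\xi}$ large the inequality $\dPC{\gamma(0)}{\xi}\leq\dPC{0}{\xi}-1$ holds. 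Since only finitely many $\gamma\in\Gamma$ satisfy $\dPC{0}{\gamma(0)}\leq C'$ for any fixed $C'$ (proper discontinuity), and the $\gamma$ produced above all satisfy $\dPC{0}{\gamma(0)}\leq\dPC{0}{\xi'}+\delta_0\leq\dPC{0}{\xi}$... this is not bounded.

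To get the finite list I would instead bound the relevant $\gamma$ differently: we only need $\gamma$ with $\dPC{\gamma(0)}{\xi}\leq\dPC{0}{\xi}-1$, i.e. $\gamma$ lies ``ahead'' of $0$ toward $\xi$ by at least distance $1$ in the Busemann sense; but there are infinitely many such $\gamma$. The resolution, which I expect to be the genuinely delicate point, is to use the hyperbolicity of $\set{D}$: pass to the point $\xi_1$ on $[0,\xi]$ at distance exactly $\delta_0+2$ from $0$ (this requires $\dPC{0}{\xi}>\delta_0+2$, consistent with the hypothesis after adjusting constants), choose $\gamma$ with $\xi_1\in\gamma(D)$, so $\dPC{0}{\gamma(0)}\leq\dPC{0}{\xi_1}+\delta_0=2\delta_0+2=:C'$ — \emph{now} this is bounded, so $\gamma$ ranges over the finite set $\{\beta_1,\dots,\beta_p\}:=\{\beta\in\Gamma:\dPC{0}{\beta(0)}\leq 2\delta_0+2\}$, finite by proper discontinuity. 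Finally, since $\xi_1$ lies on the geodesic $[0,\xi]$ at distance $\delta_0+2$ from $0$, we have $\dPC{\xi_1}{\xi}=\dPC{0}{\xi}-(\delta_0+2)$, and $\dPC{\gamma(0)}{\xi}\leq\dPC{\gamma(0)}{\xi_1}+\dPC{\xi_1}{\xi}\leq\delta_0+(\dPC{0}{\xi}-\delta_0-2)=\dPC{0}{\xi}-2\leq\dPC{0}{\xi}-1$. This gives the lemma with the threshold $\delta_0+1$ replaced by $\delta_0+2$; one absorbs the discrepancy by noting the hypothesis $\dPC{0}{\xi}>\delta_0+1$ can be strengthened to $>\delta_0+2$ at the cost of nothing in the applications, or by a minor reshuffling of constants. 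The main obstacle is exactly this: arranging that the auxiliary point pulled back into $D$ stays at \emph{bounded} distance from $0$ (so that proper discontinuity yields finitely many $\gamma$) while still being far enough along $[0,\xi]$ that subtracting its bounded ``overhead'' leaves a net gain of at least $1$ — the geodesic-on-$[0,\xi]$ trick handles both, since on a geodesic ray distances from $0$ and to $\xi$ add exactly.
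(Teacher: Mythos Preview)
Your final argument is correct and is essentially the paper's proof: pick $\xi_1$ on the geodesic $[0,\xi]$ at fixed distance from $0$, cover it by some $\gamma(D)$, note that proper discontinuity makes the set of such $\gamma$ finite, and apply the triangle inequality along the geodesic. The paper simply places $\xi_1$ at distance $\delta_0+1$ (rather than your $\delta_0+2$), which yields $\dPC{\gamma(0)}{\xi}\leq\delta_0+(\dPC{0}{\xi}-\delta_0-1)=\dPC{0}{\xi}-1$ on the nose and removes your threshold discrepancy.
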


\begin{proof} Let us first construct some~$\beta_k$, $k\in\intent{1}{p}$, that satisfy
  \begin{equation}\label{covcercle}\partial\DR{\delta_0+1}\subset\bigcup\limits_{k=1}^p\beta_k(D).\end{equation}
  Since~$\Gamma$ acts properly discontinuously on~$\set{D}$, the set $B=\{\beta\in\Gamma~;~\dPC{0}{\beta(0)}\leq 2\delta_0+1\}$  is finite. Now, for $\xi\in\partial\DR{\delta_0+1}$, there exists $\beta\in\Gamma$ such that $\xi\in\beta(D)$. But,
  \[\dPC{0}{\beta(0)}\leq\dPC{0}{\xi}+\dPC{\xi}{\beta(0)}\leq 2\delta_0+1,\]
  since $\xi=\beta(\zeta)$, $\zeta\in D$,~$\beta$ is an isometry, and by definition of~$\delta_0$. Hence, $\beta\in B$, and this completes the proof of~\eqref{covcercle}, denoting by $B=\{\beta_1,\dots,\beta_p\}$.

  With the $\beta_1,\dots,\beta_p$ constructed, let us turn to the proof of the lemma. Let $\xi\in\set{D}$ with $\dPC{0}{\xi}>\delta_0+1$ and let $\xi_1$ be the point in the geodesic joining~$0$ to~$\xi$, lying between~$0$ and~$\xi$ and belonging to $\partial\DR{\delta_0+1}$. By~\eqref{covcercle}, there exist $k\in\intent{1}{p}$ and $\zeta\in D$ such that $\xi_1=\beta_k(\zeta)$. Now, by triangle inequality,
  \[\dPC{\beta_k(0)}{\xi}\leq\dPC{\beta_k(0)}{\beta_k(\zeta)}+\dPC{\xi_1}{\xi}\leq\delta_0+\dPC{0}{\xi}-(\delta_0+1)=\dPC{0}{\xi}-1.\]
  Above, we used that~$\beta_k$ is an isometry of the Poincaré distance, that $\dPC{0}{\zeta}\leq\delta_0$ by definition of~$\delta_0$, and that $\dPC{0}{\xi}=\dPC{0}{\xi_1}+\dPC{\xi_1}{\xi}$, thanks to their configuration on the same geodesic.
\end{proof}

\begin{proof}[Proposition~\ref{propcovest}'s end of proof] Consider the $\beta_k$, $k\in\intent{1}{p}$, given by Lemma~\ref{lemcovest} and decompose each of them $\beta_k=\alpha_{1,k}\dots\alpha_{\ell_k,k}$ as a product of generators $\alpha_{j,k}\in\mathcal{G}$, for $k\in\intent{1}{p}$ and $j\in\intent{1}{\ell_k}$. Denote by $K=\max\{\ell_k~;~k\in\intent{1}{p}\}$.

  Now, arguing as for~\eqref{covcercle}, note that $\adhDR{\delta_0+1}$ is covered by a finite number of copies of the fundamental domain $D$. Hence, there is $N_0\in\set{N}$ such that for $\xi\in\adhDR{\delta_0+1}$, one can find $\alpha_1,\dots,\alpha_n\in\mathcal{G}$, with $n\leq N_0$ and $\xi\in\alpha_1\dots\alpha_n(D)$. Let us show by induction on~$L\in\set{N}$, that if $\xi\in\set{D}$ satisfies $\dPC{0}{\xi}\leq\delta_0+1+L$, then there are $\alpha_1,\dots,\alpha_n\in\mathcal{G}$, with $n\leq KL+N_0$ and $\xi\in\alpha_1\dots\alpha_n(D)$. We have just done the initialization for $L=0$. Let~$L\in\set{N}$, suppose the induction hypothesis satisfied up to rank~$L$ and let $\xi\in\set{D}$ be such that $\dPC{0}{\xi}\in\intoc{\delta_0+L+1}{\delta_0+L+2}$. By Lemma~\ref{lemcovest}, one finds~$\beta_k$, for some $k\in\intent{1}{p}$, with $\dPC{\beta_k(0)}{\xi}\leq\dPC{0}{\xi}-1$. Denote by $\xi'=\beta_k^{-1}(\xi)$. Then, $\dPC{0}{\xi'}\leq\delta_0+L+1$. By induction hypothesis, there exist $\alpha_1,\dots,\alpha_n\in\mathcal{G}$, with $n\leq KL+N_0$ and $\xi'\in\alpha_1\dots\alpha_n(D)$. Hence,
  \[\xi=\beta_k(\xi')\in\beta_k\alpha_1\dots\alpha_n(D)=\alpha_{1,k}\dots\alpha_{\ell_k,k}\alpha_1\dots\alpha_n(D),\]
  with $n+\ell_k\leq K(L+1)+N_0$, since $\ell_k\leq K$ by definition of~$K$. The induction is complete.

  Finally, consider $\delta\in\intoo{0}{K^{-1}}$ and~$N\in\set{N}$ sufficiently large. Let~$\xi\in\DR{(K^{-1}-\delta)N}$, and define $L=\sentp{N(K^{-1}-\delta)-(\delta_0+1)}$. Recall that $\sent{x}$ denotes the smallest integer~$n\in\set{Z}$ such that $n\geq x$, for $x\in\set{R}$. Applying the result of the previous paragraph, we obtain $\alpha_1,\dots,\alpha_n\in\mathcal{G}$, $n\leq KL+N_0$, such that $\xi\in\alpha_1\dots\alpha_n(D)$. So, it is enough to show that if~$N$ is sufficiently large, $KL+N_0\leq N$. By minimality of~$L$, we have
  \[KL+N_0\leq K\left(N(K^{-1}-\delta)-\delta_0\right)+N_0\leq N-KN\delta+N_0\leq N,\]
  if $N\geq N_0K^{-1}\delta^{-1}$. This completes the proof.
\end{proof}


\section{Comparing entropies}\label{seccomp}
  
\subsection{Entropy of a homeomorphisms representation with weigthed generators}\label{subsecweight}

Here, we consider the abstract setup of a compact metric space~$(T,d_T)$, a group~$\mathcal{H}$, with a (typically finite) set of generators~$\mathcal{G}$ such that $\mathcal{G}=\mathcal{G}^{-1}$, and a morphism $\rho\colon\mathcal{H}\to\Homeo(T)$. We will construct some quantity that generalizes slightly an entropy defined by Ghys, Langevin and \mbox{Walczak}~\cite[Section~2]{GLW} in this setup.

Consider also $\omega\colon\mathcal{G}\to\set{R}_+^*$ a \emph{weight function}. Define the \emph{Bowen distance} for $R>0$ and $t,s\in T$ to be
\[d_R^{\rho,\mathcal{G},\omega}(t,s)=\sup\left\{d_T(\rho(g_1\dots g_k)(t),\rho(g_1\dots g_k)(s))\right\},\]
where the $\sup$ is taken over $g_i\in\mathcal{G}$, $i\in\intent{1}{k}$ such that $\sum_{i=1}^k\omega(g_i)\leq R$, for any $k\in\set{N}$.

Once given such a Bowen distance, we can mimick the vocabulary of Subsection~\ref{subsechypent} (see Remark~\ref{gendefentropie}) about $(R,\eps)$-covering, dense and separated subsets for $(\rho,\mathcal{G},\omega)$. Denote by $B_R^{\rho,\mathcal{G},\omega}(t,\eps)$ the Bowen ball $\left\{s\in T~;~d_R^{\rho,\mathcal{G},\omega}(t,s)<\eps\right\}$, for $t\in X$ and $R,\eps>0$. Let $R,\eps>0$ and $Y\subset T$. Define $N(Y,R,\eps,\rho,\mathcal{G},\omega)$ (resp. $N'(Y,R,\eps,\rho,\mathcal{G},\omega)$) to be the minimal cardinality of an $(R,\eps)$-covering subset of~$Y$ (resp. dense subset in~$Y$), and $M(Y,R,\eps,\rho,\mathcal{G},\omega)$ the maximal cardinality of an $(R,\eps)$-separated subset in~$Y$. With exactly the same proof, we have the same property as Lemma~\ref{lienNN'M}. Hence, define the \emph{entropy of~$\rho$ with respect to the set of generators~$\mathcal{G}$ and weight~$\omega$} by
\[h(Y,\rho,\mathcal{G},\omega)=\sup\limits_{\eps>0}\limsup\limits_{R\to +\infty}\frac{1}{R}\log N(Y,R,\eps,\rho,\mathcal{G},\omega).\]
Of course, such a definition yields the same result if we had taken $N'(Y,R,\eps,\rho,\mathcal{G},\omega)$ or $M(Y,R,\eps,\rho,\mathcal{G},\omega)$ instead. If $Y=T$, denote it simply by $h(\rho,\mathcal{G},\omega)$.

\begin{rem}\label{remlienhGLWh} Let us make a comparison with the similar definition of Ghys, Langevin and \mbox{Walczak}~\cite[Section~2]{GLW}. They define the same kind of entropy (without making the Bowen distance explicit) for pseudo-groups of homeomorphisms. Here, we don't need pseudo-groups, since the holonomies of suspensions are global. However, we prefer defining it at the level of representations and not at the level of groups, since we will consider contexts in which two generators have the same image but not the same weight.

  Let us denote by $h_{\rm{GLW}}(\rho(\mathcal{H}),\rho(\mathcal{G}))$ the entropy defined by Ghys--Langevin--Walczak. For clarity reasons, they need to include the identity in~$\mathcal{G}$ but we ignore this issue (or we add it to our setup with $\omega(\id)=0$). Our entropy generalizes theirs in the sense that if~$1$ denotes the constant weight equal to~$1$, then $h_{\rm{GLW}}(\rho(\mathcal{H}),\rho(\mathcal{G}))=h(\rho,\mathcal{G},1)$. More generally, we have the following.
\end{rem}

\begin{lem}\label{lemlienhGLWh}
  Keep the notations above. If~$c_1\leq\omega\leq c_2$, for $c_1,c_2\in\set{R}_+^*$ (this always happens for some $c_1,c_2\in\set{R}_+^*$ if~$\mathcal{G}$ is finite), then
  \[c_2^{-1}h_{\rm{GLW}}(\rho(\mathcal{H}),\rho(\mathcal{G}))\leq h(\rho,\mathcal{G},\omega)\leq c_1^{-1}h_{\rm{GLW}}(\rho(\mathcal{H}),\rho(\mathcal{G})).\]
\end{lem}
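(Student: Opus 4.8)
The statement compares two entropies built from the same combinatorial data (the group $\mathcal{H}$, its generating set $\mathcal{G}$, the representation $\rho$), differing only in that one uses the constant weight $1$ and the other the weight $\omega$ with $c_1\le\omega\le c_2$. The plan is to compare the two Bowen distances $d_R^{\rho,\mathcal{G},1}$ and $d_R^{\rho,\mathcal{G},\omega}$ directly, and then chase the inequalities through the covering numbers and the $\limsup$. The key observation is a containment between the index sets defining the two suprema: if $g_1,\dots,g_k\in\mathcal{G}$ satisfy $\sum_{i=1}^k\omega(g_i)\le R$, then since $\omega\ge c_1$ we get $c_1 k\le R$, i.e.\ $k\le R/c_1$; conversely, if $k\le R/c_2$ then $\sum_{i=1}^k\omega(g_i)\le c_2 k\le R$. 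In other words, for every $R>0$ and all $t,s\in T$,
\[
d_{\lfloor R/c_2\rfloor}^{\rho,\mathcal{G},1}(t,s)\;\le\;d_R^{\rho,\mathcal{G},\omega}(t,s)\;\le\;d_{\lfloor R/c_1\rfloor}^{\rho,\mathcal{G},1}(t,s),
\]
where $d_N^{\rho,\mathcal{G},1}$ is precisely the Bowen distance of Ghys--Langevin--Walczak at discrete time $N$. (One should be slightly careful with floors versus ceilings depending on whether the constraint is $\le R$ or $<R$, but this is harmless in the limit.)

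From this sandwich I would deduce the corresponding inclusions of Bowen balls and hence inequalities between covering numbers: for all $R,\eps>0$,
\[
N\bigl(Y,\lfloor R/c_2\rfloor,\eps,\rho,\mathcal{G},1\bigr)\;\le\;N(Y,R,\eps,\rho,\mathcal{G},\omega)\;\le\;N\bigl(Y,\lfloor R/c_1\rfloor,\eps,\rho,\mathcal{G},1\bigr).
\]
Indeed, a smaller Bowen distance means larger balls, hence it is easier to cover, which controls $N$ from above; and symmetrically a larger Bowen distance gives smaller balls and forces $N$ to be at least as large as for the coarser distance. Taking $\tfrac1R\log$, letting $R\to+\infty$, and then the supremum over $\eps>0$, the change of variables $N\leftrightarrow R/c_i$ turns each $\tfrac1R$ into $\tfrac1{c_i}\cdot\tfrac1N$ in the limit, so the left inequality yields $h(\rho,\mathcal{G},\omega)\ge c_2^{-1}h_{\rm GLW}(\rho(\mathcal{H}),\rho(\mathcal{G}))$ and the right one yields $h(\rho,\mathcal{G},\omega)\le c_1^{-1}h_{\rm GLW}(\rho(\mathcal{H}),\rho(\mathcal{G}))$, which is exactly the claim.

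This proof is essentially routine bookkeeping; the only mildly delicate point—and the one I would treat most carefully—is matching up the precise definition of $h_{\rm GLW}$ used by Ghys--Langevin--Walczak (which is phrased at the level of pseudo-groups and, as Remark~\ref{remlienhGLWh} notes, includes the identity in the generating set) with the quantity $h(\rho,\mathcal{G},1)$ from the present subsection. By Remark~\ref{remlienhGLWh} we already have $h_{\rm GLW}(\rho(\mathcal{H}),\rho(\mathcal{G}))=h(\rho,\mathcal{G},1)$, so I would simply invoke that identification and work throughout with $h(\rho,\mathcal{G},1)$, so that the entire argument stays inside the framework of Subsection~\ref{subsecweight} and no re-examination of the GLW conventions is needed beyond what Remark~\ref{remlienhGLWh} already records. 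The parenthetical claim that such $c_1,c_2$ exist when $\mathcal{G}$ is finite is immediate since $\omega$ takes finitely many positive values.
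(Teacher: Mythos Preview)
Your proof is correct and follows essentially the same route as the paper: compare the Bowen distances via the inclusion of index sets (the paper writes $d_{c_2^{-1}R}^{\rho,\mathcal{G},1}\le d_R^{\rho,\mathcal{G},\omega}\le d_{c_1^{-1}R}^{\rho,\mathcal{G},1}$ without bothering with floors), pass to covering/separated numbers, take $\tfrac1R\log$ and the limit, and finish with Remark~\ref{remlienhGLWh}. The only cosmetic difference is that the paper uses $N$ for one inequality and $M$ for the other, while you use $N$ on both sides; this is immaterial by Lemma~\ref{lienNN'M}.
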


\begin{proof}It follows from the definition of $d_R^{\rho,\mathcal{G},\omega}$ that $d_{c_2^{-1}R}^{\rho,\mathcal{G},1}\leq d_R^{\rho,\mathcal{G},\omega}\leq d_{c_1^{-1}R}^{\rho,\mathcal{G},1}$. Then, separated (resp. covering) sets for $d_R^{\rho,\mathcal{G},\omega}$ being separated (resp. covering) for $d_{c_1^{-1}R}^{\rho,\mathcal{G},\omega}$ (resp. $d_{c_2^{-1}R}^{\rho,\mathcal{G},\omega}$), we obtain
  \[N(Y,c_2^{-1}R,\eps,\rho,\mathcal{G},1)\leq N(Y,R,\eps,\rho,\mathcal{G},\omega),\quad M(Y,R,\eps,\rho,\mathcal{G},\omega)\leq M(Y,c_1^{-1}R,\eps,\rho,\mathcal{G},1).\]
  Applying the $\log$, dividing by~$R$ and taking the relevant limits, it yields
  \[c_2^{-1}h(Y,\rho,\mathcal{G},1)\leq h(Y,\rho,\mathcal{G},\omega)\leq c_1^{-1}h(Y,\rho,\mathcal{G},1).\]
  We finish the proof by applying Remark~\ref{remlienhGLWh}.
\end{proof}

Once is proven that our entropy is linked to the three authors', we refer the reader to their article~\cite{GLW} for examples, estimates, and links to other notions of entropies. 

\subsection{Entropy of the distinguished transversal}\label{subsecT}

From now on, fix $\fol=\nspllam{}$ a lamination given by the suspension process of Subsection~\ref{subsecsusp}. Keep the same notations as those introduced in it. Also, denote by~$D$ a compact fundamental domain of~$\Gamma$, with $0\in D$, as was done in Subsection~\ref{subseccover} and denote by~$\delta_0$ the diameter of~$D$. Moreover, consider a smooth Riemannian metric~$\metm{M}$ on~$X$, which restricts to a Hermitian metric on the leaves, as explained in Subsection~\ref{subseclam}. Denote by~$d$ the distance on~$X$ induced by~$\metm{M}$. We want to reduce the problem of computing the entropy to the distinguished transversal~$\set{T}_0$. More precisely, we first want to obtain the following result.

\begin{thm}\label{hFhT} With the notations above, $h(\fol)=h(\set{T}_0)+2$.
\end{thm}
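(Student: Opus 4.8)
The plan is to establish the two inequalities $h(\fol)\le h(\set{T}_0)+2$ and $h(\fol)\ge h(\set{T}_0)+2$ separately, by relating the $R$-Bowen balls in $X$ to products of a Bowen ball along the leaf direction (which contributes the $+2$, since a single hyperbolic leaf has entropy $2$) and a Bowen ball on the transversal $\set{T}_0$ coming from the holonomy. The central reduction will be that, using the explicit uniformizations $\phi_t\colon\set{D}\to\leafu{t}$, $z\mapsto[z,t]$ from \eqref{defphit}, the reparametrization infimum in \eqref{defdR} can be dispensed with: I would first show that for suspensions $d_R(x,y)$ is comparable (up to fixed multiplicative constants in $\eps$ and a fixed additive shift in $R$, which do not affect the entropy) to the ``canonical'' Bowen distance $\hat d_R([z,t],[w,s])=\sup_{\xi\in\adhDR R}d_X([z\cdot\xi,t],[w\cdot\xi',s])$ where the disk automorphisms $z\cdot\xi$ are normalized so that $0\mapsto z$; the point is that two nearby points in $X$ lift to nearby points in $\set{D}\times T$ for a canonical choice of lift, and the rotation ambiguity is absorbed once we also allow a small shift in radius. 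This is exactly the ``getting rid of the reparametrization issue'' step advertised in the introduction, and I expect it to be the main technical obstacle, because one must control how the Poincaré-disk automorphisms $\sigma_z\in\Aut(\set{D})$ with $\sigma_z(0)=z$ distort $\DR R$ and compose, uniformly over $z$ in a fundamental domain.

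For the upper bound $h(\fol)\le h(\set{T}_0)+2$, I would take an $(R,\eps)$-separated set $\{x_1,\dots,x_M\}$ in $X$ realizing $M(X,R,\eps)$. Projecting each $x_i$ along its plaque to a point of $\set{T}_0$ (following the leaf back to the transversal, which moves it by a bounded amount and changes $d_R$ in a controlled way) and using the comparison of $d_R$ with $\hat d_R$, I would cover $X$ by a bounded number of flow boxes and argue that in each flow box $d_R$ decomposes as roughly the max of a leafwise $d_R$-distance and a transversal $d_R^{\rho,\mathcal{G},\omega}$-type distance via Proposition~\ref{propcovest}: namely the plaques visited inside $\DR R$ correspond to group elements $\alpha_1\cdots\alpha_n$ with $\sum\omega(\alpha_j)\lesssim R$, so separation in $X$ forces either leafwise separation (contributing at most $e^{(2+o(1))R}$ points, by the known entropy $2$ of the disk together with \eqref{bndeta}) or transversal separation under the holonomy (contributing at most $e^{(h(\set{T}_0)+o(1))R}$ points). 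Multiplying these two counts and taking $\frac1R\log$ gives the bound.

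For the lower bound $h(\fol)\ge h(\set{T}_0)+2$, I would reverse the construction: given an $(R,\eps)$-separated set in $\set{T}_0$ for the hyperbolic Bowen distance restricted to $\set{T}_0$, realizing (at the level of $\frac1R\log\limsup$) the quantity $h(\set{T}_0)$, and independently an $(R,\eps)$-separated set of size $\approx e^{2R}$ inside a single plaque (using that the leafwise entropy is exactly $2$ — this is the standard computation for the Poincaré disk, transported via \eqref{bndeta}), I would show that the ``product'' family, suitably spaced, is $(R,\eps')$-separated in $X$ for some smaller $\eps'$ depending only on $\eps$ and the geometry. The key geometric input is again the canonical-parametrization comparison, which guarantees that leafwise displacement and transversal (holonomy) displacement are genuinely independent directions up to bounded distortion, so that the two separation properties can be combined. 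Finally one lets $\eps\to 0$ and takes the $\sup$ in the definition \eqref{defentropie} of the entropy to conclude $h(\fol)=h(\set{T}_0)+2$.
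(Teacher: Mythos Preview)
Your overall strategy is correct and essentially the same as the paper's: build product-type separated (resp.\ dense) sets by combining a leafwise $e^{-R}$-net in a bounded Poincar\'e ball (which accounts for the $+2$) with an $(R,\eps)$-separated (resp.\ dense) set in $\set{T}_0$. The paper uses Lemmas~\ref{lemBac4angles}, \ref{lemDNS1Aut}, \ref{lemh>dp>eps} and especially Lemma~\ref{lemhh'} to handle the reparametrization and show that the two factors are genuinely independent; your ``canonical parametrization'' step is exactly this, and you correctly identify it as the main technical point. One stylistic difference: the paper does the upper bound by constructing a dense set in $X$ from dense sets in $\set{T}_0$ and in the disk, while you propose the dual route of projecting a separated set in $X$ down to $\set{T}_0$. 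Both work.

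There is one confusion you should clean up. You invoke Proposition~\ref{propcovest} and the weighted Bowen distance $d_R^{\rho,\mathcal{G},\omega}$ in the proof of Theorem~\ref{hFhT}, but these are not needed here and do not belong. The quantity $h(\set{T}_0)$ in the statement is the hyperbolic entropy of the subset $\set{T}_0\subset X$ computed with the \emph{same} Bowen distance $d_R$ of~\eqref{defdR}; it has nothing, a priori, to do with the group word metric. The translation from $h(\set{T}_0)$ to $h(\rho,\mathcal{G},\omega)$ is a separate statement (Theorem~\ref{hThrho}), and that is where Proposition~\ref{propcovest} enters. For Theorem~\ref{hFhT} you should work directly with $d_R$ on $\set{T}_0$: once Lemma~\ref{lemhh'} lets you replace $d_R$ by $d'_R=d_{\adhDR{R}}(\phi_t,\phi_s)$ at the cost of shrinking $R$ to $(1-\delta)R$, the product structure is immediate without ever mentioning generators or weights. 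Mixing in the group-theoretic distance here muddles the logical structure and risks a circularity if you later want to use Theorem~\ref{hFhT} in the proof of Theorem~\ref{hThrho}.
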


To prove the latter, we need several lemmas. The first two concern how fast two automorphisms of the disk diverge from each other if they are close in~$0$. The first one comes from our previous work. The formulation is slightly modified for our purposes, but the statement is exactly the one that is proven (with the notations below, because $e^{-R}\leq1-\Cmod{\xi}^2\leq2e^{-R}$, if~$R$ is sufficiently large). The second one comes from Dinh, Nguy\^{e}n and Sibony's work on the entropy of foliations. A more precise version of it can be found in~\cite[Lemma~3.14]{Bac4}, but the one here is sufficient for our needs.

\begin{lem}[{\cite[Lemma~3.11]{Bac4}}]\label{lemBac4angles} Let~$\eps>0$ be sufficiently small, $\theta\in\intcc{-\pi}{\pi}$ and $\xi\in\set{D}$ be such that $R=\dPC{0}{\xi}$ is sufficiently large.
  \begin{enumerate}[label=(\roman*),ref=\roman*]
  \item \label{lemBac4<} If $e^{-R}\geq 8\eps^{-1}\Cmod{\sin(\theta/2)}$, then $\dPC{\xi}{e^{i\theta}\xi}\leq\eps$.
  \item \label{lemBac4>} If $e^{-R}\leq \frac{1}{4}\eps^{-1}\Cmod{\sin(\theta/2)}$, then $\dPC{\xi}{e^{i\theta}\xi}\geq\eps$.
  \end{enumerate}
\end{lem}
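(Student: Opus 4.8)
The plan is to compute $\dPC{\xi}{e^{i\theta}\xi}$ explicitly (up to comparison constants) in terms of $|\xi|$ and $\theta$, and then extract the two implications by plugging in the two hypotheses. First I would recall that the Poincaré distance on $\set{D}$ is given by $\dPC{z}{w}=\ln\frac{1+\delta}{1-\delta}$ where $\delta=\left|\frac{z-w}{1-\cjg{z}w}\right|$ is the pseudo-hyperbolic distance, and that $\dPC{z}{w}$ and $2\delta$ are comparable when $\delta$ is bounded away from $1$ (say $\delta\le 1/2$), which will be the relevant regime since we are assuming $\dPC{\xi}{e^{i\theta}\xi}$ is small. So the core computation is to estimate $\delta=\left|\frac{\xi-e^{i\theta}\xi}{1-\cjg{\xi}e^{i\theta}\xi}\right|=\frac{|\xi|\,|1-e^{i\theta}|}{|1-|\xi|^2 e^{i\theta}|}$. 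The numerator is $|\xi|\cdot 2|\sin(\theta/2)|$. For the denominator, write $|\xi|^2=1-u$ with $u=1-|\xi|^2$ small (since $R=\dPC{0}{\xi}$ large means $|\xi|\to 1$); then $1-|\xi|^2 e^{i\theta}=u+(1-u)(1-e^{i\theta})$, whose modulus is between $\max(u,\,c(1-u)|\sin(\theta/2)|)$ and $u+2|\sin(\theta/2)|$, up to harmless constants. Combining with the stated relation $e^{-R}\le u=1-|\xi|^2\le 2e^{-R}$ for $R$ large, the point is that $\delta$ is comparable to $\min\left(1,\ \dfrac{e^{-R}|\sin(\theta/2)|}{\max(e^{-2R},\,e^{-R}|\sin(\theta/2)|)}\right)$, i.e. $\delta\asymp\min\left(1,\ \dfrac{|\sin(\theta/2)|}{e^{-R}+|\sin(\theta/2)|}\right)\asymp e^{R}|\sin(\theta/2)|$ when $e^{R}|\sin(\theta/2)|$ is small.

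With this in hand, both implications are immediate. For~\eqref{lemBac4<}: if $e^{-R}\ge 8\eps^{-1}|\sin(\theta/2)|$, i.e. $e^{R}|\sin(\theta/2)|\le \eps/8$, then the denominator is dominated by the $u\asymp e^{-R}$ term, so $\delta\le C e^{R}|\sin(\theta/2)|\le C\eps/8$, hence $\dPC{\xi}{e^{i\theta}\xi}\le 2\delta+\,O(\delta^2)\le\eps$ once $\eps$ is small enough and the constant $8$ is chosen to absorb $C$ (this is why the statement says ``$\eps>0$ sufficiently small'' and uses the specific constants $8$ and $\tfrac14$). For~\eqref{lemBac4>}: if $e^{-R}\le \tfrac14\eps^{-1}|\sin(\theta/2)|$, then $|\sin(\theta/2)|\ge 4\eps e^{-R}$, and now either $|\sin(\theta/2)|$ is itself not small — in which case $\delta$ is bounded below by an absolute constant and $\dPC{\xi}{e^{i\theta}\xi}\ge\eps$ trivially for $\eps$ small — or $|\sin(\theta/2)|$ is small and $\delta\asymp e^{R}|\sin(\theta/2)|\ge c\cdot 4\eps$, giving $\dPC{\xi}{e^{i\theta}\xi}\ge 2\delta\ge\eps$ after fixing constants. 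Since this lemma is quoted from~\cite[Lemma~3.11]{Bac4}, in the paper it suffices to cite that reference and remark that the reformulation is equivalent via $e^{-R}\le 1-|\xi|^2\le 2e^{-R}$, so one need not reproduce the computation in full.

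The only mildly delicate point — and the one I would be most careful about — is the bookkeeping of the absolute constants: one must check that the numbers $8$ and $\tfrac14$ in the hypotheses really are large/small enough to dominate the comparison constant $C$ coming from the pseudo-hyperbolic $\to$ hyperbolic conversion and from the denominator estimate, uniformly once $\eps$ is below some absolute threshold. Everything else is a one-line Möbius computation. In the write-up I would therefore state the single estimate $\dPC{\xi}{e^{i\theta}\xi}\asymp \min\!\left(1,\ e^{R}|\sin(\theta/2)|\right)$ (valid for $R$ large), sketch its proof in two lines via the formula for $\delta$ above, and then deduce~\eqref{lemBac4<} and~\eqref{lemBac4>} by substitution — or, as the paper does, simply defer to~\cite{Bac4}.
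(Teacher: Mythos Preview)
Your proposal is correct. The paper does not prove this lemma at all: it is quoted verbatim from~\cite[Lemma~3.11]{Bac4}, with only the parenthetical remark that the present formulation is equivalent to the one proved there via $e^{-R}\leq 1-\Cmod{\xi}^2\leq 2e^{-R}$ for~$R$ large --- exactly as you anticipated in your final paragraph. Your direct computation through the pseudo-hyperbolic distance $\delta=\dfrac{2\Cmod{\xi}\,\Cmod{\sin(\theta/2)}}{\sqrt{(1-\Cmod{\xi}^2)^2+4(1-(1-\Cmod{\xi}^2))\sin^2(\theta/2)}}$ and the comparison $\dPC{\xi}{e^{i\theta}\xi}\asymp\min\bigl(1,\,e^{R}\Cmod{\sin(\theta/2)}\bigr)$ is the natural argument behind the cited result and goes through as you describe; the only cosmetic slip is the stray $e^{-R}$ factors in both numerator and denominator of your intermediate displayed expression, which cancel and do not affect the conclusion.
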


\begin{lem}[{\cite[Lemma~3.8]{DNSI}}]\label{lemDNS1Aut}
  Let $\eps>0$ be sufficiently small. There exists $A>1$ such that for all $a,b\in\set{D}$, we have the following.
  \begin{enumerate}[label=(\roman*),ref=\roman*]
  \item\label{lemDNSI<} If $\dPC{a}{b}\leq A^{-1}e^{-R}$, then there exist $\tau_a,\tau_b\in\Aut(\set{D})$ such that $\tau_a(0)=a$, $\tau_b(0)=b$ and $\dPCs{\adhDR{R}}{\tau_a}{\tau_b}\leq\eps$.
  \item\label{lemDNSI>} If $\dPC{a}{b}\geq Ae^{-R}$, then for all $\tau_a,\tau_b\in\Aut(\set{D})$ such that $\tau_a(0)=a$ and $\tau_b(0)=b$, $\dPCs{\adhDR{R}}{\tau_a}{\tau_b}\geq\eps$.
  \end{enumerate}
\end{lem}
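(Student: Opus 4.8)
My plan is to reduce both statements to a single estimate for the displacement of one automorphism. Since the Poincaré metric is $\Aut(\set{D})$-invariant, applying the isometry $\tau_a^{-1}$ to both arguments gives
\[\dPCs{\adhDR{R}}{\tau_a}{\tau_b}=\sup_{\xi\in\adhDR{R}}\dPC{\xi}{\sigma(\xi)},\qquad\sigma=\tau_a^{-1}\tau_b,\]
and $\dPC{0}{\sigma(0)}=\dPC{\tau_a(0)}{b}=\dPC{a}{b}=:\delta$. Conjugating by a rotation (which preserves the rotation-invariant set $\adhDR{R}$), I may assume $\sigma(0)=b'$ is real with $|b'|=\tanh(\delta/2)$; then every such $\sigma$ factors uniquely as $\sigma=T_{b'}k_\psi$, where $T_{b'}(z)=\frac{z+b'}{1+b'z}$ is the hyperbolic translation of length $\delta$ along the real diameter and $k_\psi(z)=e^{i\psi}z$ is a rotation. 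The angle $\psi$ is free: for~(\ref{lemDNSI<}) I get to choose it, whereas for~(\ref{lemDNSI>}) I must control every~$\psi$.

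The workhorse is the classical formula $\sinh^2\!\big(\tfrac12\dPC{z}{w}\big)=\frac{|z-w|^2}{(1-|z|^2)(1-|w|^2)}$ (which one checks at $w=0$ matches $\dPC{0}{\xi}=\ln\frac{1+|\xi|}{1-|\xi|}$), together with $1-|\xi|^2\asymp e^{-r}$ for $\dPC{0}{\xi}=r$. Expanding $\sigma(\xi)-\xi$ for $\sigma=T_{b'}k_\psi$ and $\xi=\rho e^{i\phi}$ on the circle $\rho=|\xi|=\tanh(r/2)$ yields $\sigma(\xi)-\xi=n(\phi)/(1+b'e^{i\psi}\xi)$ with
\[n(\phi)=b'+(e^{i\psi}-1)\rho e^{i\phi}-b'\rho^2e^{i\psi}e^{2i\phi},\]
the denominator lying in $\intcc{1/2}{2}$ once $\delta$ is small. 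Using $1-|\sigma(\xi)|^2\asymp e^{-r}$ (as $\dPC{0}{\sigma(\xi)}\in\intcc{r-\delta}{r+\delta}$), everything reduces to $\dPC{\xi}{\sigma(\xi)}\asymp 2\operatorname{arcsinh}\!\big(|n(\phi)|\,e^{r}\big)$, up to uniform constants; so I only need to bound $|n(\phi)|$.

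For~(\ref{lemDNSI<}) I choose $\psi=0$, i.e. $\sigma=T_{b'}$ a pure translation (realized by $\tau_a=T_a$, $\tau_b=T_aT_{b'}$ with $b'=T_a^{-1}(b)$). Then the oscillating middle term disappears and $|n(\phi)|=|b'|\,|1-\rho^2e^{2i\phi}|\le 2|b'|\le C\delta$ uniformly in $\phi$, and this bound holds at every interior radius $r\le R$ as well. Hence $\dPC{\xi}{\sigma(\xi)}\lesssim\delta e^{r}\le\delta e^{R}$ for all $\xi\in\adhDR{R}$; if $\delta\le A^{-1}e^{-R}$ this is $\le CA^{-1}\le\eps$ once $A$ is large, which is exactly~(\ref{lemDNSI<}).

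For~(\ref{lemDNSI>}) the subtle point is that an arbitrary rotation part $\psi$ could, a priori, cancel the translation part at the very places where the latter is large; since I only need \emph{one} point of large displacement this is where the real work lies, and I expect it to be the main obstacle. The clean way around it is an $L^2$ (Parseval) argument: $n(\phi)$ is a trigonometric polynomial with constant Fourier coefficient $b'$, so
\[\max_{\phi}|n(\phi)|^2\ \ge\ \frac{1}{2\pi}\int_0^{2\pi}|n(\phi)|^2\,d\phi\ =\ |b'|^2+|e^{i\psi}-1|^2\rho^2+|b'|^2\rho^4\ \ge\ |b'|^2,\]
whence some $\phi$ gives $|n(\phi)|\ge|b'|\gtrsim\delta$ \emph{regardless of}~$\psi$. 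Taking $\xi=\rho e^{i\phi}$ on $\partial\DR{R}$ for this $\phi$ and using $1-|\sigma(\xi)|^2\gtrsim e^{-R}$ (since $\dPC{0}{\sigma(\xi)}\le R+\delta$), I get $\sinh\!\big(\tfrac12\dPC{\xi}{\sigma(\xi)}\big)\gtrsim\delta e^{R}\ge A$; for $A$ large this forces $\dPC{\xi}{\sigma(\xi)}\ge\eps$, giving~(\ref{lemDNSI>}). The degenerate case $\delta\ge\eps$ is handled trivially by $\xi=0$. Alternatively one could split into a ``rotation dominates'' regime (invoking Lemma~\ref{lemBac4angles}) and a ``translation dominates'' regime, but the averaging argument avoids that bookkeeping.
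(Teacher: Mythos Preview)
The paper does not give its own proof of this lemma; it is quoted from~\cite[Lemma~3.8]{DNSI} and used as a black box, so there is no in-paper argument to compare against.

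On its own merits your proof is essentially correct. The reduction via $\sigma=\tau_a^{-1}\tau_b$, the decomposition $\sigma=T_{b'}k_{\psi}$, and the formula $\sinh^2\!\big(\tfrac12\dPC{z}{w}\big)=\Cmod{z-w}^2/\big((1-\Cmod{z}^2)(1-\Cmod{w}^2)\big)$ are the natural tools. For~(\ref{lemDNSI<}) the choice $\psi=0$ kills the dangerous term and the estimate is immediate. For~(\ref{lemDNSI>}) the Parseval argument on the trigonometric polynomial $n(\phi)$ is a neat way to get $\max_{\phi}\Cmod{n(\phi)}\geq\Cmod{b'}$ uniformly in~$\psi$, bypassing the case split you mention at the end.

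One slip to fix in~(\ref{lemDNSI>}): you write ``using $1-\Cmod{\sigma(\xi)}^2\gtrsim e^{-R}$ (since $\dPC{0}{\sigma(\xi)}\le R+\delta$)'', but to obtain a \emph{lower} bound on $\sinh\!\big(\tfrac12\dPC{\xi}{\sigma(\xi)}\big)$ you need an \emph{upper} bound on the denominator, i.e.\ $1-\Cmod{\sigma(\xi)}^2\lesssim e^{-R}$. That follows instead from $\dPC{0}{\sigma(\xi)}\ge R-\delta$, which is equally immediate; after this correction the argument goes through unchanged.
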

  Above, recall the notation
  \[\dPCs{K}{\tau}{\sigma}=\sup_{\xi\in K}\,\dPC{\tau(\xi)}{\sigma(\xi)},\]
  for $K\subset\set{D}$ and $\tau,\sigma\in\Aut(\set{D})$.

  Now, we also need a simple result which compares the Poincaré metric on the disk and on close points in~$S$. Recall the notation~$\phi_t$, for $t\in\set{T}_0$, introduced in~\eqref{defphit}.
  
  \begin{lem}\label{lemh>dp>eps} There exist $h>0$ and $c>1$ such that
    \begin{enumerate}[label=(\roman*),ref=\roman*]
    \item \label{dp<eps} if $t\in\set{T}_0$ and $\zeta,\xi\in\set{D}$, then $d(\phi_t(\zeta),\phi_t(\xi))\leq c\dPC{\zeta}{\xi}$;
    \item \label{dp>eps} if $t,s\in\set{T}_0$, $\zeta,\xi\in\set{D}$ are such that $\dPC{\zeta}{\xi}\leq h$, then $d(\phi_t(\zeta),\phi_s(\xi))\geq c^{-1}\dPC{\zeta}{\xi}$.
    \end{enumerate}
  \end{lem}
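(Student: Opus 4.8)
The plan is to reduce everything to a single compact region upstairs in $\set{D}\times T$, where the relevant metrics are all mutually comparable by a compactness argument. First I would set up the following picture: fix a compact fundamental domain $D\ni 0$ of $\Gamma$, and recall that $\phi_t(\zeta)=[\zeta,t]$. For part \eqref{dp<eps}, the point is purely leafwise: the map $\phi_t\colon\set{D}\to\leafu{t}$ is, by construction, a local isometry from the Poincar\'e metric $\metPC{}$ to the leafwise Poincar\'e metric, and by \eqref{bndeta} we have $\metPC{}\leq c_0^2\,\metm{M}$ on the tangent space to the leaves, so $\metm{M}$-lengths of curves in $\leafu{t}$ are at most $c_0$ times their Poincar\'e lengths. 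Taking the $\metPC{}$-geodesic from $\zeta$ to $\xi$ in $\set{D}$ and pushing it forward by $\phi_t$ gives a path in $\leafu{t}\subset X$ whose $d$-length is at most $c_0\,\dPC{\zeta}{\xi}$; hence $d(\phi_t(\zeta),\phi_t(\xi))\leq c_0\,\dPC{\zeta}{\xi}$, giving the inequality with $c\geq c_0$. (One should note this uses that $d=d_X$ is the length metric induced by $\metm{M}$, which is part of the standing hypotheses.)

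For part \eqref{dp>eps}, the lower bound, I would argue by contradiction and compactness. Suppose no such $h,c$ work; then there are sequences $t_k,s_k\in\set{T}_0$ and $\zeta_k,\xi_k\in\set{D}$ with $\dPC{\zeta_k}{\xi_k}\to 0$ (we may as well assume this, shrinking $h$) yet $d(\phi_{t_k}(\zeta_k),\phi_{s_k}(\xi_k))\leq k^{-1}\dPC{\zeta_k}{\xi_k}$. The key move is to use the $\Gamma$-action to bring the base points into $D$: choose $\gamma_k\in\Gamma$ with $\gamma_k\zeta_k\in D$; replacing $(\zeta_k,\xi_k,t_k)$ by $(\gamma_k\zeta_k,\gamma_k\xi_k,\rho(\gamma_k)t_k)$ changes neither $\dPC{\zeta_k}{\xi_k}$ (the $\gamma_k$ are Poincar\'e isometries) nor the points $[\zeta_k,t_k]$, $[\xi_k,s_k]$ in $X$ (by definition of the quotient). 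After this we have $\zeta_k\in D$, and since $\dPC{\zeta_k}{\xi_k}\to 0$ the $\xi_k$ lie in a fixed compact enlargement $D'$ of $D$ once $k$ is large. Passing to subsequences, $\zeta_k,\xi_k\to\zeta_\infty\in D'$ with the same limit, $t_k\to t_\infty$, $s_k\to s_\infty$ in the compact space $T$. Then $d(\phi_{t_k}(\zeta_k),\phi_{s_k}(\xi_k))\to 0$ forces $[\zeta_\infty,t_\infty]=[\zeta_\infty,s_\infty]$, i.e. $t_\infty$ and $s_\infty$ are in the same $\Gamma$-orbit fixing $\zeta_\infty$; since the only element of $\Gamma$ with a fixed point is the identity, $t_\infty=s_\infty$. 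So near $(\zeta_\infty,t_\infty)$ we are in a single flow box, on which $d$ and the Euclidean metric on $\set{D}\times T$ are bi-Lipschitz, and on the compact disk around $\zeta_\infty$ the Euclidean and Poincar\'e metrics on $\set{D}$ are bi-Lipschitz; combining, for $k$ large $d(\phi_{t_k}(\zeta_k),\phi_{s_k}(\xi_k))\geq C^{-1}\dPC{\zeta_k}{\xi_k}$, contradicting the assumption. Then I would take $c$ to be the larger of this constant and the one from part \eqref{dp<eps}, and $h$ small enough that the compactness setup applies uniformly.

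The main obstacle is making the compactness argument uniform in the \emph{transversal} directions $t,s\in\set{T}_0$: one must check that the bi-Lipschitz constants between $d$ and the flow-box coordinates can be chosen uniformly over a neighbourhood of the diagonal $\{(x,t_\infty): \dots\}$, and that the reduction to $D$ genuinely does not disturb the transversal coordinate in a way that escapes compactness. This is handled precisely because $\set{T}_0\cong T$ is compact and $\Gamma$ acts on $\set{D}$ (hence on $\set{D}\times T$) freely and properly discontinuously, so finitely many flow boxes cover the relevant compact region and their transition data is uniformly controlled; the fixed-point-free property of $\Gamma$ (recorded in Subsection~\ref{subsecFuchs}) is exactly what rules out the degenerate case $t_\infty\neq s_\infty$. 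Everything else is a routine two-metrics-on-a-compact-set comparison.
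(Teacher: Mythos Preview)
Your argument is correct. Part~\eqref{dp<eps} is exactly the paper's proof: reduce to the fundamental domain, then apply~\eqref{bndeta} and compactness. One small slip in your write-up of~\eqref{dp>eps}: when you act by $\gamma_k$ you must also replace $s_k$ by $\rho(\gamma_k)s_k$, not just $t_k$, since $[\xi_k,s_k]=[\gamma_k\xi_k,\rho(\gamma_k)s_k]$; otherwise the point $\phi_{s_k}(\gamma_k\xi_k)$ is not the one you started from. With that fix your contradiction argument goes through.

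For part~\eqref{dp>eps} the paper takes a more direct route. Instead of arguing by contradiction, it first \emph{chooses} $h$ explicitly: since $\Gamma$ is cocompact and torsion-free, there is $h>0$ with $\dPC{\xi}{\alpha(\xi)}>2h$ for every $\xi\in\set{D}$ and every $\alpha\in\Gamma\setminus\{\id\}$ (essentially half the systole of $\Sigma$). With $h$ possibly shrunk so that both points lie in a common chart $U\simeq\set{D}\times\set{B}^p$ of the ambient manifold, one takes the Riemannian geodesic from $\phi_t(\zeta)$ to $\phi_s(\xi)$, projects it onto the leafwise factor, and observes that this projection joins $\zeta$ to some $\alpha(\xi)$; the choice of $h$ forces $\alpha=\id$, so the projected curve has Poincar\'e length at least $\dPC{\zeta}{\xi}$, and the metric comparison~\eqref{bndeta} plus compactness of $D\times\set{T}_0$ finish. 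The payoff of the paper's approach is an explicit, geometrically meaningful $h$ and a one-step argument; your compactness-and-contradiction approach is equally valid and perhaps more robust if one did not want to invoke the systole bound, but it does not identify $h$ concretely. Both proofs use the fixed-point-free property at the same essential moment---yours to force $t_\infty=s_\infty$, the paper's to force $\alpha=\id$.
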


  \begin{proof} Note that $\phi_t(\zeta)=\phi_{t'}(\zeta')$, for some $\zeta'\in D$ and $t'\in\set{T}_0$. Then, the statement~\eqref{dp<eps} follows immediatly from the equivalence of Hermitian metrics~\eqref{bndeta} and a compactness argument. For statement~\eqref{dp>eps}, let us choose $h>0$ such that
    \begin{equation}\label{condh}\dPC{\xi}{\alpha(\xi)}>2h,\qquad\xi\in\set{D},\alpha\in\Gamma\setminus\{\id\}.\end{equation}
    It is classical that such an~$h$ exists, since~$\Gamma$ is cocompact and since each $\alpha\in\Gamma\setminus\{\id\}$ has no fixed point. Reducing~$h$ if necessary, it is sufficient to prove the statement only for $\phi_t(\zeta)$ and $\phi_s(\xi)$ belonging to a common chart $U\simeq\set{D}\times\set{B}^p$ of the ambient Riemannian manifold~$\mani{M}$, where $\set{B}^p$ is the unit ball of some~$\set{R}^p$. Take a Riemannian geodesic joining $\phi_t(\zeta)$ and $\phi_s(\xi)$. If~$h$ is sufficiently small, it stays inside~$U$ and its projection in the leafwise direction joins~$\zeta$ to some~$\alpha(\xi)$, $\alpha\in\Gamma$. So that it is of Poincaré length at least~$\dPC{\zeta}{\xi}$, thanks to condition~\eqref{condh}. We conclude by compactness of $D\times\set{T}_0$, the equivalence of Riemannian metrics and~\eqref{bndeta}.
  \end{proof}

  The following result will be the last of our preparation for Theorem~\ref{hFhT}, but will also prove itself very useful in the next subsection (see Remark~\ref{remhh'}).

  \begin{lem}\label{lemhh'} Let $h>0$ be given by Lemma~\ref{lemh>dp>eps} and $\delta>0$. For all $\eps>0$ sufficiently small, and all $R>0$ sufficiently large, we have the following. Let $\zeta\in\DR{h}$ and $t,s\in\set{T}_0$ be such that $d_R(\phi_t(\zeta),\phi_s(\zeta))<\eps$. Then,
    \[d_{\adhDR{(1-\delta)R}}(\phi_t,\phi_s)<2\eps.\]
  \end{lem}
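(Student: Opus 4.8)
The plan is to unwind the definition of the Bowen distance $d_R$ on $\fol$ and reduce the claim to a statement about automorphisms of the disk, then invoke Lemma~\ref{lemBac4angles} and Lemma~\ref{lemDNS1Aut}. First I would spell out what $d_R(\phi_t(\zeta),\phi_s(\zeta))<\eps$ gives: there exist uniformizations centered at $\phi_t(\zeta)$ and $\phi_s(\zeta)$ — these are $\phi_t\circ\tau_\zeta$ and $\phi_s\circ\tau_\zeta'$ for suitable $\tau_\zeta,\tau_\zeta'\in\Aut(\set{D})$ sending $0$ to $\zeta$ — and a rotation $e^{i\theta}$ so that the leafwise orbits of $\phi_t(\zeta)$ and $\phi_s(\zeta)$ stay $\eps$-close on $\adhDR{R}$, i.e.
\[
\sup_{\xi\in\adhDR{R}} d\bigl(\phi_t(\tau_\zeta(\xi)),\,\phi_s(\tau_\zeta'(e^{i\theta}\xi))\bigr)<\eps.
\]
I want to conclude the analogous estimate for the uniformizations centered at $t$ and $s$ themselves, namely $\phi_t$ and $\phi_s$, on the slightly smaller ball $\adhDR{(1-\delta)R}$, up to a rotation; this is exactly $d_{\adhDR{(1-\delta)R}}(\phi_t,\phi_s)<2\eps$.

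The key point is a change of parametrization. Since $\dPC{0}{\zeta}\le h$ is bounded independently of $R$, the automorphism $\tau_\zeta$ (canonically, say $\tau_\zeta(\xi)=\frac{\xi+\zeta}{1+\bar\zeta\xi}$ up to rotation) moves $\adhDR{(1-\delta)R}$ into $\adhDR{R}$ once $R$ is large enough, because a Möbius transformation moving $0$ a bounded hyperbolic distance distorts hyperbolic radii only by an additive constant $O(h)$, which is absorbed by the gap $\delta R$. So I would: (1) take $\xi\in\adhDR{(1-\delta)R}$, write $\xi=\tau_\zeta(\xi')$ with $\xi'=\tau_\zeta^{-1}(\xi)$, noting $\xi'\in\adhDR{R}$ for $R$ large; (2) similarly $\xi$, viewed from $s$, corresponds to $\tau_\zeta'^{-1}(\xi)$, but I should compare $\tau_\zeta$ and $\tau_\zeta'$. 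Both send $0$ to points at bounded hyperbolic distance, and I would like them to essentially agree. Here is where Lemma~\ref{lemBac4angles} and Lemma~\ref{lemDNS1Aut} enter: the hypothesis $d_R(\phi_t(\zeta),\phi_s(\zeta))<\eps$ together with Lemma~\ref{lemh>dp>eps}\eqref{dp>eps} forces $\dPC{\zeta}{\text{(image of }\zeta\text{ under the relevant deck map)}}$ and the rotation angle $\theta$ to be small — more precisely it forces the two orbits to be genuinely close, so $\tau_\zeta$ and $e^{i\theta}\tau_\zeta'$ differ by an automorphism fixing $0$ and moving $\adhDR{(1-\delta)R}$ by at most, say, $\eps$ in the sup metric, by Lemma~\ref{lemDNS1Aut}\eqref{lemDNSI<}. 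Then for $\xi\in\adhDR{(1-\delta)R}$, writing $\xi$ through $\tau_\zeta$ and using the triangle inequality in $d$ together with the Lipschitz bound from Lemma~\ref{lemh>dp>eps}\eqref{dp<eps},
\[
d\bigl(\phi_t(\xi),\phi_s(e^{i\theta_0}\xi)\bigr)\le d\bigl(\phi_t(\tau_\zeta(\xi')),\phi_s(\tau_\zeta'(e^{i\theta}\xi'))\bigr)+c\cdot(\text{small Poincaré error})<2\eps,
\]
for a suitable $\theta_0$, provided $\eps$ was chosen small enough at the outset. Taking the $\inf$ over rotations gives $d_{\adhDR{(1-\delta)R}}(\phi_t,\phi_s)<2\eps$.

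The main obstacle I expect is bookkeeping the two distinct reparametrizations $\tau_\zeta,\tau_\zeta'$ and the rotation $\theta$ coming from the definition of $d_R$, and showing they can be replaced by a single honest rotation relating $\phi_t$ and $\phi_s$ with only an $O(\eps)$ cost — this is precisely the "getting rid of the reparametrization issue" point flagged in the introduction. The quantitative input is that all these automorphisms fix $0$ up to a bounded hyperbolic translation (because $\zeta\in\DR{h}$, $h$ fixed), so on the shrunken ball $\adhDR{(1-\delta)R}$ the deficiency $\delta R\to\infty$ dominates every additive constant, and Lemma~\ref{lemDNS1Aut} converts "close at $0$" into "uniformly close on $\adhDR{(1-\delta)R}$". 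Everything else is the triangle inequality plus the two-sided comparison of $d$ with the leafwise Poincaré distance from Lemma~\ref{lemh>dp>eps}.
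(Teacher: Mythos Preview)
Your plan has the right ingredients but a genuine gap at the end, rooted in a misreading of the conclusion. The quantity $d_{\adhDR{(1-\delta)R}}(\phi_t,\phi_s)$ is by definition $\sup_{\xi\in\adhDR{(1-\delta)R}}d(\phi_t(\xi),\phi_s(\xi))$, with \emph{no} rotation allowed. So the whole point of the lemma is to \emph{remove} the rotation appearing in $d_R$. Your final line, ``for a suitable $\theta_0$ \ldots\ Taking the $\inf$ over rotations gives $d_{\adhDR{(1-\delta)R}}(\phi_t,\phi_s)<2\eps$'', goes the wrong way: establishing $\sup_\xi d(\phi_t(\xi),\phi_s(e^{i\theta_0}\xi))<2\eps$ and then taking an infimum over rotations yields something \emph{smaller} than the quantity you need, not a bound on it.

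Relatedly, two simplifications would make the argument straight. First, since $\phi_t(\zeta)$ and $\phi_s(\zeta)$ involve the \emph{same} $\zeta$, you may take a single $\tau\in\Aut(\set{D})$ with $\tau(0)=\zeta$ and set $\psi_t=\phi_t\circ\tau$, $\psi_s=\phi_s\circ\tau$; the second automorphism $\tau_\zeta'$ is redundant (it differs from $\tau_\zeta$ by a rotation, which is absorbed into $\theta$), and Lemma~\ref{lemDNS1Aut} plays no role here. Second --- and this is the missing step --- once you have argued (via Lemma~\ref{lemh>dp>eps}\eqref{dp>eps} and Lemma~\ref{lemBac4angles}\eqref{lemBac4>}) that $|\theta|\lesssim e^{-R}$, you must use this smallness to kill the rotation: by Lemma~\ref{lemBac4angles}\eqref{lemBac4<}, for $\xi\in\adhDR{(1-\delta/2)R}$ one has $\dPC{\xi}{e^{i\theta}\xi}\le c^{-1}\eps$, hence $d(\psi_s(\xi),\psi_s(e^{i\theta}\xi))\le\eps$ by Lemma~\ref{lemh>dp>eps}\eqref{dp<eps}. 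The triangle inequality then gives $d_{\adhDR{(1-\delta/2)R}}(\psi_t,\psi_s)<2\eps$, and since $\adhDR{(1-\delta)R}\subset\tau(\adhDR{(1-\delta/2)R})$ for $R$ large (because $\dPC{0}{\zeta}<h$ is bounded), you conclude $d_{\adhDR{(1-\delta)R}}(\phi_t,\phi_s)<2\eps$ with no rotation left. This is exactly how the paper proceeds.
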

  Here again and in the proof below, we use the notation
  \[d_{K}(\phi,\psi)=\sup_{\xi\in K}\,d(\phi(\xi),\psi(\xi)),\]
  for $K\subset\set{D}$ and $\phi,\psi\colon\set{D}\to X$.

  \begin{proof} Let $\tau\in\Aut(\set{D})$ be such that $\tau(0)=\zeta$ and define $\psi_t=\phi_t\circ\tau$, $\psi_s=\phi_s\circ\tau$. Since $d_R(\psi_t(0),\psi_s(0))<\eps$, one finds $\theta\in\intcc{-\pi}{\pi}$ with $d_{\adhDR{R}}(\psi_t,\psi_s\circ r_{\theta})<\eps$, where $r_{\theta}$ is the rotation of angle~$\theta$ in~$\set{D}$. We first show that $\Cmod{\theta}\leq e^{-R}$. Note that if there is some~$\xi\in\adhDR{R}$ with $\dPC{\xi}{e^{i\theta}\xi}\geq c\eps$, then by continuity, there is a~$\xi\in\adhDR{R}$ such that $h\geq\dPC{\xi}{e^{i\theta}\xi}\geq c\eps$. Applying statement~\eqref{dp>eps} of Lemma~\ref{lemh>dp>eps} to $\tau(\xi)$ and $\tau(e^{i\theta}\xi)$, it follows that $d(\psi_t(\xi),\psi_s(e^{i\theta}\xi))\geq\eps$, a contradiction. Hence, for all~$\xi\in\adhDR{R}$, $\dPC{\xi}{e^{i\theta}\xi}\leq c\eps$. By statement~\eqref{lemBac4>} of Lemma~\ref{lemBac4angles}, one obtains $\Cmod{\sin(\theta/2)}\leq 4\eps e^{-R}$, so that $\Cmod{\theta}\leq e^{-R}$, for~$\eps$ sufficiently small.

    Now, since $\adhDR{(1-\delta)R}\subset\tau\left(\adhDR{(1-\delta/2)R}\right)$, for $R$ sufficiently large,
    \[d_{\adhDR{(1-\delta)R}}(\phi_t,\phi_s)\leq d_{\adhDR{(1-\delta/2)R}}(\psi_t,\psi_s)\leq d_{\adhDR{R}}(\psi_t,\psi_s\circ r_{\theta})+d_{\adhDR{(1-\delta/2)R}}(\psi_s,\psi_s\circ r_{\theta}).\]
    Above, the first term of the right hand side is at most~$\eps$, and the second also by statement~\eqref{lemBac4<} of Lemma~\ref{lemBac4angles}. This concludes the proof.
  \end{proof}

  \begin{proof}[Proof of Theorem~\ref{hFhT}] We prove separately the two inequalities. Let us first show that $h(\fol)\leq2+h(\set{T}_0)$. Take~$F$ an $\left(R+\delta_0,\frac{\eps}{2}\right)$-dense subset in~$\set{T}_0$ of minimal cardinality and~$G$ an $A^{-1}e^{-R}$-dense subset of~$\DR{\delta_0}$ for the Poincaré distance of minimal cardinality. Here, $A$ is the constant of Lemma~\ref{lemDNS1Aut}. Define $H=\{\phi_t(\zeta)~;~t\in F,~\zeta\in G\}$. We claim that~$H$ is $(R,\eps)$-dense in~$X$. Let $x=\phi_s(\xi)\in X$, for $s\in\set{T}_0$ and $\xi\in D$. Take $t\in F$ with $d_{R+\delta_0}(t,s)<\frac{\eps}{2}$. In particular, one finds $\theta\in\set{R}$ with $d_{\adhDR{R+\delta_0}}(\phi_t,\phi_s\circ r_{\theta})<\frac{\eps}{2}$. Let~$\tau$ be an automorphism of the disk satisfying $\tau(0)=e^{-i\theta}\xi$. Since $\dPC{0}{\xi}\leq\delta_0$, $d_{\adhDR{R}}(\phi_t\circ\tau,\phi_s\circ r_{\theta}\circ\tau)<\frac{\eps}{2}$. Moreover, one can find~$\zeta\in G$ such that $\dPC{\zeta}{\tau(0)}<A^{-1}e^{-R}$, and an automorphism~$\sigma$ of~$\set{D}$ such that $\sigma(0)=\zeta$ and $\dPCs{\adhDR{R}}{\sigma}{\tau}\leq c^{-1}\frac{\eps}{2}$, by statement~\eqref{lemDNSI<} of Lemma~\ref{lemDNS1Aut}. Putting all together, we obtain
    \[d_R(x,\phi_t(\zeta))\leq d_{\adhDR{R}}\left(\phi_s\circ r_{\theta}\circ\tau,\phi_t\circ\sigma\right)\leq d_{\adhDR{R}}\left(\phi_s\circ r_{\theta}\circ\tau,\phi_t\circ\tau\right)+d_{\adhDR{R}}\left(\phi_t\circ\tau,\phi_t\circ\sigma\right).\]
    In the right hand side, the first term is smaller than~$\frac{\eps}{2}$, and the second too, by statement~\eqref{dp<eps} of Lemma~\ref{lemh>dp>eps}. It follows that~$H$ is $(R,\eps)$-dense in~$X$. Hence,
    \[N'(X,R,\eps)\leq\card(H)=\card(F)\card(G)\leq Ce^{2R}N'\left(\set{T}_0,R+\delta_0,\frac{\eps}{2}\right).\]
      Taking $\frac{1}{R}\log$ and relevant limits, it yields $h(\fol)\leq2+h(\set{T}_0)$.

      For the other inequality, fix $\delta>0$, $F$ a $((1-\delta)R,2\eps)$-separated subset in~$\set{T}_0$ of maximal cardinality and~$G$ an $Ae^{-R}$-separated subset for the Poincaré distance in~$\DR{h/2}$ of maximal cardinality. Let us show that~$H=\{\phi_t(\zeta)~;~t\in F,~\zeta\in G\}$ is $(R,\eps)$-separated in~$X$. Take $t,s\in F$ and $\zeta,\xi\in G$ such that $d_R(\phi_t(\zeta),\phi_s(\xi))<\eps$. We need to prove that $t=s$, $\zeta=\xi$. Since $d_R(\phi_t(\zeta),\phi_s(\xi))<\eps$, one finds two automorphisms $\tau,\sigma\in\Aut(\set{D})$ such that $\tau(0)=\zeta$, $\sigma(0)=\xi$ and $d_{\adhDR{R}}(\phi_t\circ\tau,\phi_s\circ\sigma)<\eps$. By the same argument as in Lemma~\ref{lemhh'}, since $\dPC{\zeta}{\xi}\leq h$, it follows that $\dPCs{\adhDR{R}}{\tau}{\sigma}\leq c\eps$. Choosing well~$A$ above to apply Lemma~\ref{lemDNS1Aut}\eqref{lemDNSI>}, one gets that $\dPC{\zeta}{\xi}\leq Ae^{-R}$ and that $\zeta=\xi$, since they both belong to~$G$. Now, apply Lemma~\ref{lemhh'} to obtain that $d_{\adhDR{(1-\delta)R}}(\phi_t,\phi_s)<2\eps$. Since $t,s\in F$, one gets that $t=s$. Hence,~$H$ is $(R,\eps)$-separated in~$X$ and
      \[M(X,R,\eps)\geq\card(H)=\card(F)\card(G)\geq C^{-1}e^{2R}M(\set{T}_0,(1-\delta)R,2\eps).\]
      Taking $\frac{1}{R}\log$ and limits, it yields $h(\fol)\geq2+(1-\delta)h(\set{T}_0)$, and since~$\delta$ is arbitrary, this concludes the proof.
  \end{proof}

  \begin{rem}\label{remhh'} Lemma~\ref{lemhh'} will be very useful in the next subsection, since it will help us to get rid of reparametrization in~\eqref{defdR} to compute $h(\set{T}_0)$. Indeed, define another Bowen distance to be for $R>0$,
    \begin{equation}\label{defdR'}d_R'(t,s)=\sup_{\xi\in\adhDR{R}}\,d(\phi_t(\xi),\phi_s(\xi)),\qquad t,s\in\set{T}_0.\end{equation}
    Compared to~$d_R$, instead of being the distance between the two closest parametrizations of~$\leafu{t}$, $\leafu{s}$, it is the distance between the two canonical parametrizations~$\phi_t$ of~$\leafu{t}$ and~$\phi_s$ of~$\leafu{s}$. Once the Bowen distance $d_R'$ is given, we define completely analogous concepts of being $(R,\eps)'$-covering, dense or separated, for subsets of~$\set{T}_0$. Also, adding primes in the definition, we obtain an analogous notion of entropy denoted by~$h'(Y)$, for $Y\subset\set{T}_0$.

    In fact, $h(Y)=h'(Y)$. Indeed, it is clear that $d_R'\geq d_R$. Therefore, $h'(Y)\geq h(Y)$. On the other hand, Lemma~\ref{lemhh'} gives that for $\delta>0$, there is a $C>0$ such that $d_{(1-\delta)R}'\leq Cd_R$, for~$R$ sufficiently large. It follows that $h(Y)\geq(1-\delta)h'(Y)$ and since~$\delta$ is arbitrary, $h(Y)=h'(Y)$. It will be much more convenient to work with~$d_R'$ in the following to avoid reparametrization issues. Here, we showed that we still compute the same entropy.
  \end{rem}

  \subsection{Comparing with the weighted entropy}

  The results of the previous subsection are very general to non-singular laminations (see also~\cite[Proposition~4.1]{DNSII} for the singular case) and do not really depend on the suspension structure, with minor changes. What should be changed is that we would no longer consider the global transversal~$\set{T}_0$ but the union of local transversals in flow boxes. Also, there would be nothing like Lemma~\ref{lemhh'} since there would be no \emph{canonical} parametrization. In this subsection, we enter more deeply into the structure of a suspension, and into the underlying Fuchsian group.

  For $R>0$, denote by $\Gamma_R=\{\alpha\in\Gamma~;~\dPC{0}{\alpha(0)}\leq R\}$, and for $t,s\in\set{T}_0$, consider yet another Bowen-like distance
  \begin{equation}\label{defdGammaR}d_{\Gamma_R}(t,s)=\max\{d(\phi_t(\alpha(0)),\phi_s(\alpha(0)))~;~\alpha\in\Gamma_R\}.\end{equation}
  This distance is somehow equivalent to the distance~$d_R'$ defined in~\eqref{defdR'}, as is shown in the following.

  \begin{lem}\label{dGammad'} For $t,s\in\set{T}_0$ and $R>0$, we have $d_{\Gamma_R}(t,s)\leq d_R'(t,s)$. Moreover, for any~$\eps>0$, there exists~$\lambda>0$, independent on~$R$, $t$ and~$s$, such that $d_{\Gamma_{R+\delta_0}}(t,s)\leq\lambda$ implies $d_R'(t,s)\leq\eps$.
  \end{lem}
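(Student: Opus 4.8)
The first inequality $d_{\Gamma_R}(t,s)\le d_R'(t,s)$ is immediate: if $\alpha\in\Gamma_R$ then $\alpha(0)\in\adhDR{R}$ by definition of $\Gamma_R$ and $\DR{R}$, so $d(\phi_t(\alpha(0)),\phi_s(\alpha(0)))$ is one of the terms over which the supremum defining $d_R'(t,s)$ is taken; taking the max over $\alpha\in\Gamma_R$ yields the claim. So the whole content of the lemma is the second statement, and that will be the main obstacle: controlling $\sup_{\xi\in\adhDR{R}}d(\phi_t(\xi),\phi_s(\xi))$ by finitely many ``lattice'' evaluations at points $\alpha(0)$.

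The plan is to use that the $\Gamma$-translates $\alpha(D)$, $\alpha\in\Gamma$, tile $\set{D}$, and that $D$ has diameter $\delta_0$. Given $\xi\in\adhDR{R}$, pick $\alpha\in\Gamma$ with $\xi\in\alpha(D)$; then $\dPC{\xi}{\alpha(0)}\le\delta_0$, hence $\dPC{0}{\alpha(0)}\le R+\delta_0$, i.e. $\alpha\in\Gamma_{R+\delta_0}$. Now estimate
\[
d(\phi_t(\xi),\phi_s(\xi))\le d(\phi_t(\xi),\phi_t(\alpha(0)))+d(\phi_t(\alpha(0)),\phi_s(\alpha(0)))+d(\phi_s(\alpha(0)),\phi_s(\xi)).
\]
The middle term is at most $d_{\Gamma_{R+\delta_0}}(t,s)\le\lambda$ by hypothesis. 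For the two outer terms, apply Lemma~\ref{lemh>dp>eps}\eqref{dp<eps}: each is bounded by $c\,\dPC{\xi}{\alpha(0)}\le c\delta_0$. This is not small, so a naive choice of $\lambda$ does not finish the job — and this is precisely where the suspension structure must be exploited more carefully.

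The fix is to replace the crude estimate $\dPC{\xi}{\alpha(0)}\le\delta_0$ by a genuinely small one, obtained by passing to a \emph{finer} tiling. Fix a subgroup-free finite set of coset representatives: since $\adhDR{\delta_0}$ is covered by finitely many translates $\alpha(D)$, for any $\eta>0$ one can instead subdivide, or—more simply—argue as follows. Given $\eta>0$, cover the compact set $\adhDR{\delta_0}$ by finitely many Poincaré balls of radius $\eta$; pulling back through the uniformizations and using cocompactness of $\Gamma$, there is a finite set $S\subset\Gamma$ such that for every $\xi\in\set{D}$ there is $\alpha\in S\cdot\Gamma$... cleaner: by cocompactness there is $r_0>\delta_0$ and a finite $S\subset\Gamma$ with $\adhDR{r_0}\subset\bigcup_{\beta\in S}\beta(\adhDR{\eta})$, whence for any $\xi\in\set{D}$ with $\dPC{0}{\xi}\le R$ there is $\beta\in S\cdot\Gamma_{?}$... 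The robust formulation is: choose a finite set $E=E(\eta)\subset\Gamma$ of elements with $\dPC{0}{\beta(0)}\le\delta_0$-neighbourhood structure so that the balls $\beta(\adhDR{\eta})$, $\beta\in E$, cover $\adhDR{\delta_0}$ (possible by compactness of $\adhDR{\delta_0}$ and the fact that $\{\gamma(0):\gamma\in\Gamma\}$ is $\delta_0$-dense in $\set{D}$). Then for $\xi\in\adhDR{R}$, writing $\xi=\gamma(\zeta)$ with $\zeta\in\adhDR{\delta_0}$ (after translating back by an element of $\Gamma_{R}$ as above) and $\zeta\in\beta(\adhDR{\eta})$ with $\beta\in E$, the element $\alpha=\gamma\beta$ satisfies $\alpha\in\Gamma_{R+2\delta_0}$ (absorb $2\delta_0$ into $\delta_0$ by enlarging $D$, or track the constant) and $\dPC{\xi}{\alpha(0)}\le\eta$. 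Feeding $\eta$ small enough into Lemma~\ref{lemh>dp>eps}\eqref{dp<eps} makes the two outer terms $\le c\eta<\eps/3$ each, and choosing $\lambda=\eps/3$ handles the middle term. I would present this by first stating and fixing $\eta=\eta(\eps)$ with $c\eta<\eps/3$, then extracting the finite $\delta_0$-net giving $E(\eta)$, then running the three-term triangle inequality; the only mild care needed is bookkeeping of the additive constant to $R$ (a harmless relabelling of $\delta_0$, or one observes $R+2\delta_0\le R+\delta_0$ is false so one simply states the hypothesis with the correct constant $R+2\delta_0$, or absorbs it since $\delta_0$ was merely chosen as ``a'' diameter-type constant). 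The main obstacle, as flagged, is organizing this finite-net argument so that the point $\alpha(0)$ is simultaneously Poincaré-close to $\xi$ \emph{and} lies in $\Gamma_{R+\delta_0}$; cocompactness of $\Gamma$ is what makes both possible at once.
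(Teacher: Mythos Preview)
Your first inequality is fine, and you correctly diagnose why the naive triangle inequality
\[
d(\phi_t(\xi),\phi_s(\xi))\le c\,\dPC{\xi}{\alpha(0)}+d_{\Gamma_{R+\delta_0}}(t,s)+c\,\dPC{\xi}{\alpha(0)}
\]
only gives $2c\delta_0+\lambda$, which is not small. But your proposed fix is impossible. You want, for every $\eta>0$, to find $\alpha\in\Gamma$ with $\dPC{\xi}{\alpha(0)}\le\eta$; equivalently, to cover $\adhDR{\delta_0}$ by finitely many balls $\beta(\adhDR{\eta})$ with $\beta\in\Gamma$. Since $\Gamma$ is Fuchsian, the orbit $\Gamma\cdot0$ is a \emph{discrete} subset of $\set{D}$: there is a positive lower bound (essentially the injectivity radius of $\Sigma$) on the distance between distinct orbit points, and for $\eta$ below half that bound the balls $\beta(\adhDR{\eta})$ are pairwise disjoint and cannot cover anything open. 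Cocompactness gives only $\delta_0$-density of the orbit, not $\eta$-density for arbitrary $\eta$. So the ``finer tiling'' argument cannot be carried out.

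The idea you are missing is the equivariance built into the suspension: $\phi_t\circ\alpha=\phi_{\rho(\alpha)^{-1}(t)}$, because $[\alpha(\zeta),t]=[\zeta,\rho(\alpha)^{-1}(t)]$ in $X=(\set{D}\times T)/\Gamma$. Write $\xi=\alpha(\zeta)$ with $\zeta\in D\subset\adhDR{\delta_0}$ and set $t'=\rho(\alpha)^{-1}(t)$, $s'=\rho(\alpha)^{-1}(s)$. Then
\[
d(\phi_t(\xi),\phi_s(\xi))=d(\phi_{t'}(\zeta),\phi_{s'}(\zeta))\le d'_{\delta_0}(t',s'),
\]
while $d(t',s')=d(\phi_t(\alpha(0)),\phi_s(\alpha(0)))\le d_{\Gamma_{R+\delta_0}}(t,s)\le\lambda$. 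Now $(t',s')\mapsto d'_{\delta_0}(t',s')$ is a single continuous function on the compact set $\set{T}_0\times\set{T}_0$, vanishing on the diagonal; uniform continuity gives $\lambda=\lambda(\eps)$ with $d(t',s')\le\lambda\Rightarrow d'_{\delta_0}(t',s')\le\eps$, and this $\lambda$ is visibly independent of $R$, $t$, $s$. This is exactly the paper's proof. The point is not to make $\alpha(0)$ Poincaré-close to $\xi$, but to use the $\Gamma$-equivariance to transport the pair $(\phi_t,\phi_s)$ near $\xi$ back to a pair $(\phi_{t'},\phi_{s'})$ near $0$, where a single compactness argument handles all $\alpha$ at once.
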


  \begin{proof} The first inequality is immediate. For the second, define~$\Delta_R=\cup_{\alpha\in\Gamma_R}\alpha(\adhDR{\delta_0})$. Then, $\adhDR{R}\subset\Delta_{R+\delta_0}$. Hence, for $t,s\in\set{T}_0$
    \begin{equation}\label{eqdR'dGammaR}\begin{aligned}d_R'(t,s)=d_{\adhDR{R}}(\phi_t,\phi_s)\leq d_{\Delta_{R+\delta_0}}(\phi_t,\phi_s)\leq&\max_{\alpha\in\Gamma_{R+\delta_0}}d_{\adhDR{\delta_0}}(\phi_t\circ\alpha,\phi_s\circ\alpha),\\
    \leq &\max_{\alpha\in\Gamma_{R+\delta_0}}d'_{\delta_0}(\rho(\alpha)(t),\rho(\alpha)(s)).\end{aligned}\end{equation}
    The function~$d'_{\delta_0}$ is uniformly continuous on the compact~$\set{T}_0\times\set{T}_0$. So, for~$\eps>0$, we can find $\lambda>0$ such that $d(t',s')\leq\lambda$ implies $d'_{\delta_0}(t',s')\leq\eps$, $t',s'\in\set{T}_0$. Suppose now that $d_{\Gamma_{R+\delta_0}}(t,s)\leq\lambda$. Then, for all $\alpha\in\Gamma_{R+\delta_0}$, $d(\rho(\alpha)(t),\rho(\alpha)(s))\leq\lambda$ and thus, $d'_{\delta_0}(\rho(\alpha)(t),\rho(\alpha)(s))\leq\eps$. It follows from~\eqref{eqdR'dGammaR} that $d_R'(t,s)\leq\eps$, as awaited. 
  \end{proof}

  The previous lemma will enable us to compare the entropy of~$\fol$ to some weighted entropy for~$\Gamma$ and~$\rho$, in the sense of Subsection~\ref{subsecweight}. Consider~$\mathcal{G}$ a set of generators such that $\mathcal{G}=\mathcal{G}^{-1}$, and for $\alpha\in\mathcal{G}$, define $\omega(\alpha)=\dPC{0}{\alpha(0)}$. Note that the weight~$\omega$ depends only on the Fuchsian group, and not on the representation~$\rho$.

  \begin{thm}\label{hThrho} With the notations above, there exists a constant $K'=K'(\Gamma)$, depending only on the Fuchsian group~$\Gamma$ and not on the representation~$\rho$, such that
    \[h(\rho,\mathcal{G},\omega)\leq h(\set{T}_0)\leq K'h(\rho,\mathcal{G},\omega).\]
  \end{thm}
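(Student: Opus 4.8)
The plan is to compare the entropy $h(\set{T}_0) = h'(\set{T}_0)$ coming from the Bowen distance $d_R'$ of~\eqref{defdR'} (using Remark~\ref{remhh'}) with the weighted entropy $h(\rho,\mathcal{G},\omega)$, using the intermediate distance $d_{\Gamma_R}$ of~\eqref{defdGammaR} and Lemma~\ref{dGammad'} as a bridge on one side, and the covering estimates of Proposition~\ref{propcovest} together with Remark~\eqref{rempropcovset} on the other. The key point is that, via the abuse of notation $t \simeq [0,t]$, one has $\phi_t(\alpha(0)) = [\alpha(0),t] = [\,0,\rho(\alpha)(t)\,]$ after pulling back by a deck transformation, so that evaluating $\phi_t$ at the orbit point $\alpha(0)$ is essentially the same as applying the holonomy $\rho(\alpha)$ to $t$; this is what translates Poincaré distance to~$0$ in the disk (i.e. $\dPC{0}{\alpha(0)}$, which is $\sum_j \omega(\alpha_{i_j})$ for a word $\alpha = \alpha_{i_1}\cdots\alpha_{i_n}$) into the weighted word length governing $d_R^{\rho,\mathcal{G},\omega}$.

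For the \textbf{left inequality} $h(\rho,\mathcal{G},\omega) \le h(\set{T}_0)$, I would show that a separated set for $d_R^{\rho,\mathcal{G},\omega}$ is, up to constants, separated for $d_{cR}'$ for a suitable $c$, hence controls $h(\set{T}_0) = h'(\set{T}_0)$. Concretely: if $t,s \in \set{T}_0$ satisfy $d_R'(t,s) < \eps$, then by Lemma~\ref{dGammad'} (first inequality) $d_{\Gamma_R}(t,s) < \eps$, i.e. $d(\phi_t(\alpha(0)),\phi_s(\alpha(0))) < \eps$ for all $\alpha \in \Gamma_R$. For any word $\alpha = \alpha_1\cdots\alpha_n$ with $\sum_j \omega(\alpha_j) = \sum_j \dPC{0}{\alpha_j(0)} \le R$, the triangle inequality gives $\dPC{0}{\alpha(0)} \le R$, so $\alpha \in \Gamma_R$; combined with the translation $\phi_t(\alpha(0)) \leftrightarrow \rho(\alpha)(t)$ and the uniform comparison~\eqref{bndeta}/compactness between $d$ on $\set{T}_0$ and $d$ on plaques, this forces $d_T(\rho(\alpha)(t),\rho(\alpha)(s)) < C\eps$. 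Hence $d_{R}^{\rho,\mathcal{G},\omega}(t,s) \le C\eps$, so an $(R,\eps)$-separated set for $d_R^{\rho,\mathcal{G},\omega}$ is $(R, C^{-1}\eps)$-separated for $d_R'$; taking $\log$, dividing by $R$ and passing to the relevant limits yields $h(\rho,\mathcal{G},\omega) \le h'(\set{T}_0) = h(\set{T}_0)$.

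For the \textbf{right inequality} $h(\set{T}_0) \le K' h(\rho,\mathcal{G},\omega)$, I would go the other direction using Lemma~\ref{dGammad'} (second part) and Proposition~\ref{propcovest}. Fix $\eps>0$ and let $\lambda = \lambda(\eps)>0$ be as in Lemma~\ref{dGammad'}, so that $d_{\Gamma_{R+\delta_0}}(t,s) \le \lambda$ implies $d_R'(t,s) \le \eps$. By Proposition~\ref{propcovest}, there is $K = K(\Gamma) > 1$ such that for large $N$, every $\alpha \in \Gamma$ with $\dPC{0}{\alpha(0)} \le R+\delta_0$ lies in $\bigcup_{\,n \le N}\, \alpha_1\cdots\alpha_n(D)$ with $N \approx K(R+\delta_0)$; equivalently, up to a bounded $D$-diameter error $\delta_0$, every such $\alpha$ agrees (modulo $\Gamma$-translate of the fundamental domain) with a word of generator-length at most $\sim K(R+\delta_0)$, whose weighted length $\sum_j \omega(\alpha_j) \le N R_0 \le K R_0 (R+\delta_0)$ by~\eqref{rempropcovset}. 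Therefore if $d^{\rho,\mathcal{G},\omega}_{K R_0(R+\delta_0)}(t,s)$ is small, then $d(\rho(\alpha)(t),\rho(\alpha)(s))$ is small — hence $\le \lambda$ after absorbing the $\delta_0$-error via uniform continuity of $d'_{\delta_0}$ exactly as in Lemma~\ref{dGammad'} — for all $\alpha \in \Gamma_{R+\delta_0}$, so $d_{\Gamma_{R+\delta_0}}(t,s) \le \lambda$ and thus $d_R'(t,s) \le \eps$. This gives, for a suitable radius comparison $R' \asymp K R_0 R$, that $(R',\eta)$-covering sets for $d^{\rho,\mathcal{G},\omega}$ are $(R,\eps)'$-covering for $\set{T}_0$; taking $\log$, dividing by $R$ and letting the relevant limits act yields $h'(\set{T}_0) \le K R_0 \cdot h(\rho,\mathcal{G},\omega)$ (with the $\delta_0$ disappearing in the limit), so $h(\set{T}_0) \le K' h(\rho,\mathcal{G},\omega)$ with $K' = K' (\Gamma)$ built from $K(\Gamma)$, $R_0$ and $\delta_0$ — all of which depend only on the Fuchsian group and the choice of $\mathcal{G}$, not on $\rho$.

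The main obstacle I anticipate is the \textbf{bookkeeping of the radius/weight dictionary}: keeping careful track of how the weighted word-length $\sum_j \omega(\alpha_j)$, the generator-count $n$, the hyperbolic radius $\dPC{0}{\alpha(0)}$, and the fundamental-domain diameter $\delta_0$ interconvert, so that the chain $d^{\rho,\mathcal{G},\omega} \leadsto d_{\Gamma_R} \leadsto d_R' \leadsto d_R$ composes into clean linear radius comparisons with the $\delta_0$-shifts and additive constants genuinely vanishing after dividing by $R \to \infty$. A secondary subtlety is ensuring the identification $\phi_t(\alpha(0)) \leftrightarrow \rho(\alpha)(t)$ is used with the right uniform comparison between the ambient metric $d$ restricted to $\set{T}_0$ and to plaques — but this is exactly the kind of compactness-plus-\eqref{bndeta} argument already deployed in Lemma~\ref{lemh>dp>eps} and Lemma~\ref{dGammad'}, so it should go through verbatim.
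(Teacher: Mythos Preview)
Your proposal is correct and follows essentially the same route as the paper: compare $d_R'$ to $d_{\Gamma_R}$ via Lemma~\ref{dGammad'}, then $d_{\Gamma_R}$ to $d^{\rho,\mathcal{G},\omega}$ via the covering estimates of Proposition~\ref{propcovest}, with the resulting constant $K' = K R_0 = K c_2$ depending only on~$\Gamma$. The paper's execution is marginally more streamlined --- it works directly with the distance chain $d_{\Gamma_{(K^{-1}-\delta)N}} \le d_N^{\rho,\mathcal{G},1} \le d_{c_2 N}^{\rho,\mathcal{G},\omega}$ rather than your word-representation digression, and note that the identification $\phi_t(\alpha(0)) = [0,\rho(\alpha^{-1})(t)] \in \set{T}_0$ is \emph{exact} (so your constant $C$ in the left inequality is unnecessary; the harmless $\alpha \leftrightarrow \alpha^{-1}$ swap is absorbed by the symmetry of~$\Gamma_R$).
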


  \begin{proof} Here, we use the results of Subsections~\ref{subseccover} and~\ref{subsecweight}. By~\eqref{rempropcovset} and Lemma~\ref{dGammad'}, $d_R'\geq d_{\Gamma_R}\geq d_R^{\rho,\mathcal{G},\omega}$. Hence, considering an $(R,\eps)'$-dense subset in~$\set{T}_0$, it is $\eps$-dense for $d_R^{\rho,\mathcal{G},\omega}$ and $h(\set{T}_0)=h'(\set{T}_0)\geq h(\rho,\mathcal{G},\omega)$, by Remark~\ref{remhh'}.

    For the other inequality, by Proposition~\ref{propcovest}, $d_{\Gamma_{(K^{-1}-\delta)N}}\leq d_N^{\rho,\mathcal{G},1}\leq d_{c_2N}^{\rho,\mathcal{G},\omega}$, for a constant~$K$ depending only on~$\Gamma$, any~$\delta\in\intoo{0}{K^{-1}}$ and~$N$ sufficiently large. Here, we used the notations of Lemma~\ref{lemlienhGLWh}. In particular, $c_2=\max\{\omega(\alpha)~;~\alpha\in\mathcal{G}\}$ only depends on~$\Gamma$. So, using Lemma~\ref{dGammad'}, if $d_{c_2N}^{\rho,\mathcal{G},\omega}(t,s)\leq\lambda$, then $d'_{(K^{-1}-\delta)N-\delta_0}(t,s)\leq\eps$. Considering a $\lambda$-dense subset for $d_{c_2N}^{\rho,\mathcal{G},\omega}$, it is $\left((K^{-1}-\delta)N-\delta_0,\eps\right)'$-dense. Taking $\frac{1}{N}\log$ and limits, we get $c_2h(\rho,\mathcal{G},\omega)\geq (K^{-1}-\delta)h'(\set{T}_0)$. Since~$\delta$ is arbitrary and by Remark~\ref{remhh'}, it yields $h(\set{T}_0)\leq c_2Kh(\rho,\mathcal{G},\omega)$, with $K'=c_2K$ only depending on~$\Gamma$.
  \end{proof}

  \subsection{Deriving some consequences}\label{subseccsq}

  We begin by constructing an example showing that the entropy is not a smooth invariant. Consider a complex manifold~$T$ and $f\colon T\to T$ to be an automorphism of positive entropy~$h(f)$. Take~$\rho\colon\Gamma\to\Aut(T)$ a representation such that $\rho(\alpha_1)=f$, and $\rho(\alpha_k)=\id$, $k\in\intent{2}{g}$, $\rho(\beta_k)=\id$, for $k\in\intent{1}{g}$. Here, we used the notations of the presentation of Proposition~\ref{presFuchs}. For such a representation, Ghys--Langevin--Walczak~\cite[Exemple~2.2]{GLW} showed that~$h_{\text{GLW}}\left(\rho(\Gamma),\rho(\mathcal{G})\right)=2h(f)$. Moreover, with the arguments of Lemma~\ref{lemlienhGLWh}, we also obtain that $h(\rho,\mathcal{G},\omega)=\frac{1}{\omega(\alpha_1)}h_{\text{GLW}}\left(\rho(\Gamma),\rho(\mathcal{G})\right)$. This has the following consequence.

  \begin{prop}\label{pasinvdiff} There exist two smoothly diffeomorphic foliations~$\fol_1$, $\fol_2$, such that $h(\fol_1)\neq h(\fol_2)$. In other words, the entropy in not a $\smooth$-invariant.
  \end{prop}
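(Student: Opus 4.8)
The plan is to exploit the freedom in choosing the Fuchsian group underlying a suspension. The entropy $h(\fol)$ of a suspension is governed, via Theorem~\ref{hFhT} and Theorem~\ref{hThrho}, by the weighted entropy $h(\rho,\mathcal{G},\omega)$, and the computation recalled just above shows that for the specific representation $\rho$ with $\rho(\alpha_1)=f$ and all other generators sent to $\id$, one has $h(\rho,\mathcal{G},\omega)=\tfrac{1}{\omega(\alpha_1)}\,h_{\mathrm{GLW}}(\rho(\Gamma),\rho(\mathcal{G}))=\tfrac{2h(f)}{\omega(\alpha_1)}$, where $\omega(\alpha_1)=\dPC{0}{\alpha_1(0)}$. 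Thus the entropy of the resulting suspension depends on the displacement length $\dPC{0}{\alpha_1(0)}$ of the chosen generator, a quantity which can be made as small as we like by Proposition~\ref{existdegFuchs}, while the genus $g$ stays fixed. Since all smooth compact Riemann surfaces of genus $g$ are diffeomorphic (Proposition~\ref{presFuchs}), suspensions built over two different Fuchsian groups of the same genus, with the ``same'' representation data, will be smoothly diffeomorphic but can be arranged to have different entropies.

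More precisely, first I would fix a complex manifold $T$ carrying an automorphism $f$ with $h(f)>0$ (for instance a hyperbolic automorphism of a complex torus or K3 surface, or even just $T$ the circle times a hyperbolic toral automorphism in the real category — one only needs $h(f)>0$). Then, using Proposition~\ref{existdegFuchs}, choose two Fuchsian groups $\Gamma_1=\Gamma_{\eps_1}$ and $\Gamma_2=\Gamma_{\eps_2}$ with $\set{D}/\Gamma_i$ smooth compact of the same genus $g\geq2$, symmetric generating sets $\mathcal{G}_i$, and distinguished generators $\alpha^{(i)}\in\mathcal{G}_i$ with $\dPC{0}{\alpha^{(i)}(0)}\leq\eps_i$, choosing $\eps_1\neq\eps_2$ small enough that the resulting entropies differ. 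By Proposition~\ref{presFuchs} there is a group isomorphism $\Gamma_1\cong\Gamma_2$ carrying the standard presentation generators to the standard presentation generators; composing with the canonical representation $\rho_0\colon\langle\alpha_1,\dots\rangle\to\Aut(T)$ that sends $\alpha_1\mapsto f$ and all other generators to $\id$, I obtain representations $\rho_i\colon\Gamma_i\to\Aut(T)$, and hence suspensions $\fol_1,\fol_2$.

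The two suspensions are smoothly diffeomorphic: the isomorphism $\Gamma_1\cong\Gamma_2$ is realized by a diffeomorphism $\set{D}/\Gamma_1\to\set{D}/\Gamma_2$ which lifts to a $\Gamma$-equivariant diffeomorphism $\wt{F}\colon\set{D}\to\set{D}$ (quasiconformal, hence smooth after isotopy, by the solution of the Nielsen realization / change of marking), and $\wt{F}\times\id_T$ descends to a diffeomorphism $(\set{D}\times T)/\Gamma_1\to(\set{D}\times T)/\Gamma_2$ sending leaves to leaves because the two representations are intertwined by the isomorphism. On the other hand, Theorem~\ref{hFhT} and Theorem~\ref{hThrho} give, for a suspension over $\Gamma_i$,
\[
2+h(\rho_i,\mathcal{G}_i,\omega_i)\le h(\fol_i)\le 2+K'(\Gamma_i)\,h(\rho_i,\mathcal{G}_i,\omega_i),
\]
and the explicit value $h(\rho_i,\mathcal{G}_i,\omega_i)=2h(f)/\dPC{0}{\alpha^{(i)}(0)}\ge 2h(f)/\eps_i$. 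Taking $\eps_2$ extremely small compared to $\eps_1$ (and then, if necessary, replacing $\Gamma_1$ by one with $\alpha^{(1)}$ not shrunk, i.e.\ $\eps_1$ of order one), the lower bound for $h(\fol_2)$ exceeds the upper bound for $h(\fol_1)$, since $K'$ and the relevant constants depend only on the respective Fuchsian groups; one may even keep a single value of $\eps$ and simply note that shrinking $\eps$ forces $h(\fol)\to\infty$ while over a fixed group of the same genus the entropy is finite, so two suitable choices are non-equal.

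The main obstacle — and the only genuinely non-formal point — is producing the \emph{smooth} diffeomorphism between $\fol_1$ and $\fol_2$: one must check that the abstract group isomorphism of Proposition~\ref{presFuchs} is induced by an actual diffeomorphism of the surfaces and that its lift, twisted by $\id_T$, respects the leafwise structure. This is classical (any homeomorphism of a surface is isotopic to a diffeomorphism, and a marking-preserving diffeomorphism $\set{D}/\Gamma_1\to\set{D}/\Gamma_2$ lifts to a $\Gamma$-equivariant diffeomorphism of $\set{D}$), and equivariance immediately gives that $\wt F\times\id_T$ intertwines the two $\Gamma$-actions, hence descends to a leafwise-smooth diffeomorphism of the total spaces; but it is worth stating carefully since the definition of ``$\class{\infty}$-invariant'' is exactly about such diffeomorphisms. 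Everything else is bookkeeping: finiteness of $h(\fol_1)$ (from the cited results, as the representation has positive but finite $h_{\mathrm{GLW}}$), the monotonicity of $h(\fol)$ as a function of $\eps$ through the displacement length, and a comparison of constants to guarantee strict inequality.
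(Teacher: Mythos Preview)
Your proposal is correct and follows essentially the same route as the paper: fix one Fuchsian group $\Gamma_1$, use Proposition~\ref{existdegFuchs} to produce $\Gamma_2$ of the same genus with a generator of arbitrarily small displacement, and compare the upper bound $h(\fol_1)\le 2+2h(f)K'(\Gamma_1)/\omega_1(\alpha_{1,1})$ against the lower bound $h(\fol_2)\ge 2+2h(f)/\omega_2(\alpha_{1,2})$ coming from Theorems~\ref{hFhT} and~\ref{hThrho}. The paper's proof is organized exactly this way; your alternative phrasing ``shrinking $\eps$ forces $h(\fol)\to\infty$ while $h(\fol_1)$ stays finite'' is in fact slightly cleaner. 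The one point both you and the paper must handle with care --- and which you flag --- is that the short generator $\alpha_\eps$ produced by Proposition~\ref{existdegFuchs} really is a \emph{standard} generator in the presentation of Proposition~\ref{presFuchs}, so that the diffeomorphism $\Sigma_1\to\Sigma_2$ can be chosen to send $\alpha_{1,1}$ to $\alpha_\eps$; the paper dispatches this by pointing to the side-pairing construction and \cite[10.4.2]{Farb}.
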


  \begin{proof} Denote by $\fol_1$, $\fol_2$ two suspensions obtained by the process above, with different but group-isomorphic Fuchsian groups~$\Gamma_1,\Gamma_2$, respective generators~$\mathcal{G}_1$, $\mathcal{G}_2$, and with same~$T$ and~$f$. Denote by~$\alpha_{1,j}\in\mathcal{G}_j$, $j\in\{1,2\}$, the generator of~$\Gamma_j$ sent to~$f$. Since the Riemann surfaces $\Sigma_j=\set{D}/\Gamma_j$ are diffeomorphic (being compact and of same genus) by a diffeomorphism sending~$\alpha_{1,1}$ to~$\alpha_{1,2}$, the foliations~$\fol_1$ and~$\fol_2$ are diffeomorphic. Moreover, by the paragraph above and by Theorems~\ref{hFhT} and~\ref{hThrho},
    \[\begin{aligned}h(\fol_1)&\leq2+K'(\Gamma_1)h(\rho_1,\mathcal{G}_1,\omega_1)=2+2h(f)\frac{K'(\Gamma_1)}{\omega_1(\alpha_{1,1})},\\
    h(\fol_2)&\geq2+h(\rho_2,\mathcal{G}_2,\omega_2)=2+\frac{2h(f)}{\omega_2(\alpha_{1,2})}.\end{aligned}\]

    So, fixing~$\Gamma_1$ and choosing~$\Gamma_2$ in consequence, it is enough to find it, along with a generator~$\alpha_{1,2}$ such that $\omega_2(\alpha_{1,2})=\dPC{0}{\alpha_{1,2}(0)}<\frac{\omega_1(\alpha_{1,1})}{K'(\Gamma_1)}$, which is a fixed constant. This can be achieved by Proposition~\ref{existdegFuchs} (see Figure~\ref{figpasinvdiff}). The whole point is that one can find a fixed generator~$\alpha_{1,1}\in\mathcal{G}_1$, which will be sent to the~$\alpha_{\eps}$ of Proposition~\ref{existdegFuchs} by the diffeomorphism. This is guaranteed by the construction of~$\alpha_{\eps}$ and by~\cite[10.4.2]{Farb}.
  \end{proof}

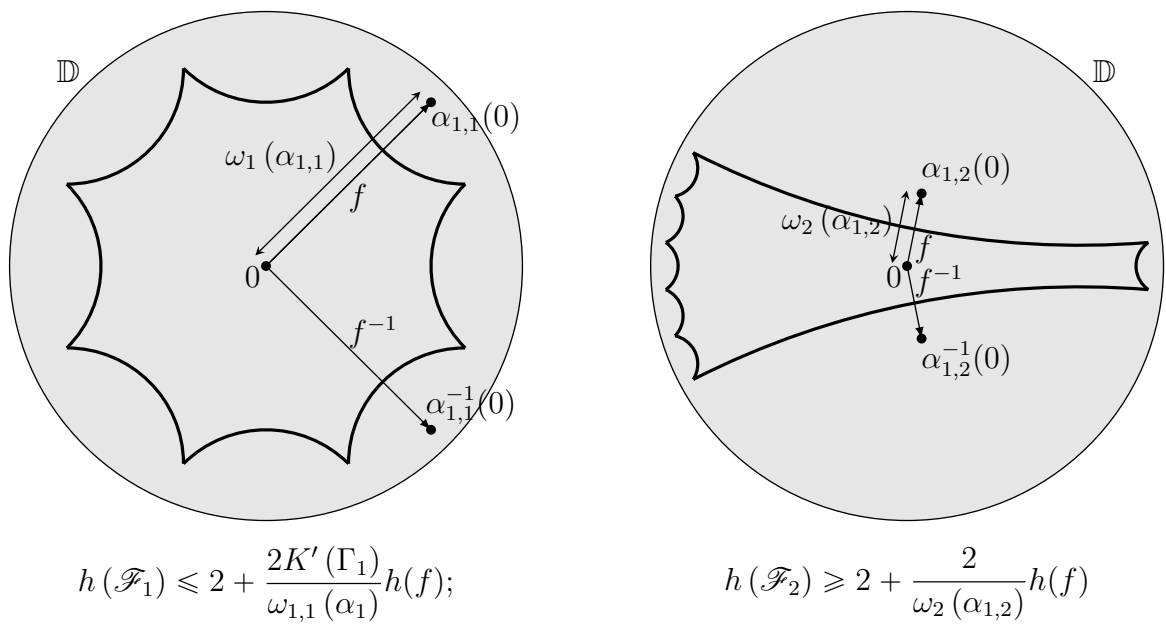
\begin{figure}[thb]
  \centering
  \def \globalscale{10}
\begin{tikzpicture}[y=0.80pt, x=0.80pt, yscale=\globalscale, xscale=\globalscale, inner sep=0pt, outer sep=0pt,line width=0.5pt,draw=black]
\draw[white] (-8.7,8.7) node[above left,black] {$\set{D}$} circle (.1);
\filldraw[black,fill opacity=0.1] (0,0) circle (12);
\draw[line width=1.2pt] (9.32265,3.86166) arc (135:225:5.46108);
\draw[line width=1.2pt] (3.86166,9.32265) arc (180:270:5.46108);
\draw[line width=1.2pt] (-3.86166,9.32265) arc (225:315:5.46108);
\draw[line width=1.2pt] (-9.32265,3.86166) arc (270:360:5.46108);
\draw[line width=1.2pt] (-9.32265,-3.86166) arc (315:405:5.46108);
\draw[line width=1.2pt] (-3.86166,-9.32265) arc (0:90:5.46108);
\draw[line width=1.2pt] (3.86166,-9.32265) arc (45:135:5.46108);
\draw[line width=1.2pt] (9.32265,-3.86166) arc (90:180:5.46108);
\filldraw[black] (0,0) node[below left,xshift=-2] {$0$} circle (0.2);
\filldraw[black] (7.72313,7.72313) node[below right] {$\alpha_{1,1}(0)$} circle (0.2);
\filldraw[black] (7.72313,-7.72313) node[above right,yshift=2,xshift=-2] {$\alpha_{1,1}^{-1}(0)$} circle (0.2);
\draw[->,>=latex](0,0) -- (7.72313,7.72313) node[midway,below right] {$f$};
\draw[<->,>=stealth](-0.5,0.5) -- (7.22313,8.22313) node[midway,above left] {$\omega_1\left(\alpha_{1,1}\right)$};
\draw(0,0) -- (7.72313,7.72313) ;
\draw[->,>=latex](0,0) -- (7.72313,-7.72313) node[midway,above right] {$f^{-1}$};
\draw[white] (0,-15) node[black] {$h\left(\fol_1\right)\leq2+\dfrac{2K'\left(\Gamma_1\right)}{\omega_{1,1}\left(\alpha_1\right)}h(f);$} circle (.1);

\draw[white] (38.7,8.7) node[above right,black] {$\set{D}$} circle (.1);
\filldraw[fill=black, fill opacity=0.1] (30,0) circle (12);
\draw[line width=1.2pt] (41.2898,1.1119) arc (125.2999:234.7001:1.36245);
\draw[line width=1.2pt] (18.7101,-1.1119) arc (-54.7001:54.7001:1.36245);
\draw[line width=1.2pt] (18.7101,1.1119) arc (-65.9501:43.4501:1.36245);
\draw[line width=1.2pt] (19.14406,3.2931) arc (-77.2001:32.2001:1.36245);
\draw[line width=1.2pt] (19.14406,-3.2931) arc (-43.4501:65.9501:1.36245);
\draw[line width=1.2pt] (19.9951,-5.34477) arc (-32.2001:77.2001:1.36245);
\draw[line width=1.2pt] (41.2898,-1.1119) arc (85.3503:117.1497:39.6269);
\draw[line width=1.2pt] (19.9951,5.34477) arc (-117.1497:-85.3503:39.6269);
\filldraw[black] (30,0) node[below left,xshift=-2] {$0$} circle (0.2);
\draw[white] (30,-15) node[black] {$h\left(\fol_2\right)\geq2+\dfrac{2}{\omega_2\left(\alpha_{1,2}\right)}h(f)$} circle (.1);
\filldraw[black] (30.6807,3.4221) node[above right,yshift=3] {$\alpha_{1,2}(0)$} circle (0.2);
\filldraw[black] (30.6807,-3.4221) node[below right,yshift=-1] {$\alpha_{1,2}^{-1}(0)$} circle (0.2);
\draw[<->,>=stealth] (29.3065,0.13795) -- (29.9872,3.56005) node[midway,left,xshift=-2,yshift=2] {$\omega_2\left(\alpha_{1,2}\right)$};
\draw[<-,>=latex] (30.6807,3.4221) -- (30,0) node[midway,below right,yshift=-2] {$f$};
\draw[->,>=latex] (30,0) -- (30.6807,-3.4221) node[midway,above right,yshift=2] {$f^{-1}$};
\end{tikzpicture}
  \caption{\label{figpasinvdiff} Fundamental domains in the proof of Proposition~\ref{existdegFuchs}.}
  \end{figure}


  \begin{rem} We believe that such a construction can show that there are suspensions for which all values in $\intoo{2}{\infty}$ can be achieved in their class of diffeomorphism. Indeed, the proof above shows that arbitrarily large entropy can be achieved. Moreover, Proposition~\ref{existdegFuchs} can be adapted to have a generator of arbitrary big weight. In Subsection~\ref{subsecZ}, we will even get an exact value of the entropy, of the form $2K_0h(f)$, with a constant~$K_0$ depending only on~$\Gamma$. However, this is not sufficient, since the constant~$K_0$ is not easily seen to depend continuously on the Fuchsian group, and we are not able to control it enough to show that arbitrarily small entropy greater than~$2$ can be achieved.
  \end{rem}

  So, the entropy is not a smooth invariant. However, in the case of suspensions and of homeomorphisms like the one in the proof of Proposition~\ref{pasinvdiff}, we have the following.

  \begin{prop}\label{h2inffaible} Let $\Phi\colon\Sigma_1\to\Sigma_2$ be a homeomorphism between two hyperbolic \mbox{Riemann} surfaces inducing an isomorphism $\varphi\colon\Gamma_1=\pi_1(\Sigma_1)\to\Gamma_2=\pi_1(\Sigma_2)$, $\Psi\colon T_1\to T_2$ a homeomorphism between compact metric spaces, and $\rho_i\colon\Gamma_i\to\Homeo(T_i)$, $i\in\{1,2\}$, be representations such that $\rho_2(\gamma_2)=\Psi\circ\rho_1\left(\varphi^{-1}\left(\gamma_2\right)\right)\circ\Psi^{-1}$, for $\gamma_2\in\Gamma_1$.

    Let~$\fol_i$ be the usual suspension of the representation~$\rho_i$, $i\in\{1,2\}$. Then, $h(\fol_1)=2$ if and only if $h(\fol_2)=2$ and $h(\fol_1)=+\infty$ if and only if $h(\fol_2)=+\infty$.
  \end{prop}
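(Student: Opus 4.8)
The plan is to strip off, one layer at a time, everything that distinguishes~$\fol_1$ from~$\fol_2$, until the only thing left is a quantity that is manifestly invariant under the conjugation by~$\Psi$. The point that makes this possible, despite Proposition~\ref{pasinvdiff}, is that the statement only concerns the \emph{extremal} values~$2$ and~$+\infty$: all the comparison constants produced along the way by Theorems~\ref{hFhT} and~\ref{hThrho} and by Lemma~\ref{lemlienhGLWh} are positive and finite, so although they do change the value of the entropy, they cannot change whether it is~$2$ or~$+\infty$.

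First I would apply Theorem~\ref{hFhT}: if $\set{T}_0^{(i)}$ denotes the distinguished transversal of~$\fol_i$, then $h(\fol_i)=2+h(\set{T}_0^{(i)})$, $i\in\{1,2\}$, so, the hyperbolic entropy being nonnegative, $h(\fol_i)=2$ is equivalent to $h(\set{T}_0^{(i)})=0$ and $h(\fol_i)=+\infty$ is equivalent to $h(\set{T}_0^{(i)})=+\infty$. Next, fix a symmetric generating set~$\mathcal{G}_2$ of~$\Gamma_2$ and set $\mathcal{G}_1=\varphi^{-1}(\mathcal{G}_2)$, which is a symmetric generating set of~$\Gamma_1$ (as $\varphi$ is a group isomorphism), and let $\omega_i(\alpha)=\dPC{0}{\alpha(0)}$ be the associated weights. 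Theorem~\ref{hThrho} gives $h(\rho_i,\mathcal{G}_i,\omega_i)\leq h(\set{T}_0^{(i)})\leq K'(\Gamma_i)\,h(\rho_i,\mathcal{G}_i,\omega_i)$ with $K'(\Gamma_i)$ finite and positive, and since~$\mathcal{G}_i$ is finite the weight~$\omega_i$ is bounded between two positive constants, so Lemma~\ref{lemlienhGLWh} shows that $h(\rho_i,\mathcal{G}_i,\omega_i)$ and $h(\rho_i,\mathcal{G}_i,1)=h_{\mathrm{GLW}}(\rho_i(\Gamma_i),\rho_i(\mathcal{G}_i))$ are comparable up to positive finite factors. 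Chaining these, $h(\fol_i)=2$ is equivalent to $h(\rho_i,\mathcal{G}_i,1)=0$ and $h(\fol_i)=+\infty$ is equivalent to $h(\rho_i,\mathcal{G}_i,1)=+\infty$.

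It then remains to prove $h(\rho_1,\mathcal{G}_1,1)=h(\rho_2,\mathcal{G}_2,1)$. By the choice $\mathcal{G}_1=\varphi^{-1}(\mathcal{G}_2)$, for each $N\in\set{N}$ the map~$\varphi^{-1}$ induces a bijection between words of length~$\leq N$ in~$\mathcal{G}_2$ and words of length~$\leq N$ in~$\mathcal{G}_1$, and the hypothesis $\rho_2(\gamma)=\Psi\circ\rho_1(\varphi^{-1}(\gamma))\circ\Psi^{-1}$ propagates to every such word: $\rho_2(w)=\Psi\circ\rho_1(\varphi^{-1}(w))\circ\Psi^{-1}$. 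Therefore the Bowen distances $d_R^{\rho_1,\mathcal{G}_1,1}$ and $d_R^{\rho_2,\mathcal{G}_2,1}$ correspond to one another under~$\Psi$ up to a loss which, by uniform continuity of~$\Psi$ and~$\Psi^{-1}$ on the compact metric spaces~$T_1$ and~$T_2$, can be bounded uniformly in~$R$; running the usual $\eps$--$\delta$ comparison of covering numbers then gives the equality, exactly as in the proof that topological entropy is invariant under topological conjugacy. Combining this with the two reductions of the previous paragraph proves the proposition.

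The main obstacle is exactly this last step: since~$\Psi$ is only assumed to be a homeomorphism (not bi-Lipschitz), one cannot compare $d_R^{\rho_1,\mathcal{G}_1,1}$ and $d_R^{\rho_2,\mathcal{G}_2,1}$ pointwise, so one has to be careful to keep the $\eps$'s and $\delta$'s in the right order when passing between covering numbers for~$\rho_1$ and for~$\rho_2$. Everything else is bookkeeping with the positive finite constants of Theorems~\ref{hFhT} and~\ref{hThrho} and of Lemma~\ref{lemlienhGLWh}.
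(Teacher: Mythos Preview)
Your proof is correct and follows essentially the same route as the paper's: reduce via Theorems~\ref{hFhT} and~\ref{hThrho} (together with Lemma~\ref{lemlienhGLWh}) to showing that $h_{\mathrm{GLW}}(\rho_i(\Gamma_i),\rho_i(\mathcal{G}_i))$ vanishes or is infinite for $i=1$ iff for $i=2$, and then invoke uniform continuity of~$\Psi$ and~$\Psi^{-1}$. Your explicit choice $\mathcal{G}_1=\varphi^{-1}(\mathcal{G}_2)$ and the careful tracking of the $\eps$--$\delta$ conjugacy argument are details the paper leaves implicit, but the structure is the same.
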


  \begin{proof} By Theorems~\ref{hFhT} and~\ref{hThrho}, we have $h(\fol_i)=2$ (resp. $h(\fol_i)=+\infty$) if and only if $h_{\text{GLW}}\left(\rho_i\left(\Gamma_i\right),\rho_i\left(\mathcal{G}_i\right)\right)=0$ (resp. $=+\infty$), for $i\in\{1,2\}$. Since~$\Psi$ is uniformly continuous, it is easy to obtain the statement of the proposition from this observation.
  \end{proof}


  
  It is worth noting that having infinite entropy is quite a wild phenomenon. Indeed, for a general compact lamination, it was shown by Dinh, Nguy\^{e}n and Sibony (see in~\cite{DNSI}, Theorem~3.10 and the few lines below Theorem~2.1) that if it is of class $\class{2+\alpha}$, for~$\alpha>0$, then it has finite entropy. In the case of suspensions, thanks to a result of Ghys, Langevin and Walczak, one has better.

  \begin{prop}\label{Lipimpfin} Let~$\fol$ be a Lipschitz suspension. Then $h(\fol)<+\infty$.
  \end{prop}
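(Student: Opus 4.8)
The plan is to push finiteness through the two comparison theorems of this section and then land in exactly the situation handled by Ghys, Langevin and Walczak. First, Theorem~\ref{hFhT} gives $h(\fol)=h(\set{T}_0)+2$, so it is enough to bound $h(\set{T}_0)$. Next, Theorem~\ref{hThrho} gives $h(\set{T}_0)\leq K'(\Gamma)\,h(\rho,\mathcal{G},\omega)$; and since $\mathcal{G}$ is finite the weight $\omega$ satisfies $c_1\leq\omega\leq c_2$ for suitable $c_1,c_2\in\set{R}_+^*$, so Lemma~\ref{lemlienhGLWh} gives $h(\rho,\mathcal{G},\omega)\leq c_1^{-1}h_{\rm{GLW}}(\rho(\Gamma),\rho(\mathcal{G}))$. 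Hence everything reduces to $h_{\rm{GLW}}(\rho(\Gamma),\rho(\mathcal{G}))<+\infty$, which is precisely the finiteness statement of~\cite{GLW} for a finitely generated pseudo-group of Lipschitz maps; one may simply quote it here, since our $h_{\rm{GLW}}$ is literally theirs by Remark~\ref{remlienhGLWh}. For completeness I would also give the short self-contained estimate below.

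Since $\fol$ is a Lipschitz lamination, by the form~\eqref{chgtcartes} of the transition maps of a suspension each holonomy generator $\rho(g)$, $g\in\mathcal{G}$, is a bi-Lipschitz homeomorphism of $T$; set $L=\max\{1,\operatorname{Lip}(\rho(g))\ ;\ g\in\mathcal{G}\}$. Then any word $\rho(g_1\cdots g_n)$ with $g_i\in\mathcal{G}$ and $n\leq N$ is $L^N$-Lipschitz, so the Ghys--Langevin--Walczak Bowen distance satisfies $d_N^{\rho,\mathcal{G},1}(t,s)\leq L^N d_T(t,s)$. Consequently any $\eps L^{-N}$-net of $(T,d_T)$ is $(N,\eps)$-dense for $d_N^{\rho,\mathcal{G},1}$, so, writing $m(r)$ for the least number of $d_T$-balls of radius $r$ covering $T$, we get $N(T,N,\eps,\rho,\mathcal{G},1)\leq m(\eps L^{-N})$. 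Because $T$ is a compact subset of a manifold (Subsection~\ref{subsecsusp}), it has finite box dimension $D$, so $m(r)\leq C_\eps\, r^{-D}$ for small $r$, hence $N(T,N,\eps,\rho,\mathcal{G},1)\leq C_\eps\,\eps^{-D}L^{ND}$ for large $N$. Taking $\tfrac1N\log$, letting $N\to+\infty$ and then $\sup_{\eps>0}$, this yields $h_{\rm{GLW}}(\rho(\Gamma),\rho(\mathcal{G}))\leq D\log L<+\infty$, and tracing back the reductions gives $h(\fol)<+\infty$.

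I do not expect a genuine obstacle here: the whole content is GLW's box-counting observation plus the two comparison theorems already established. The only mild points are the translation ``$\fol$ Lipschitz $\Rightarrow$ each $\rho(g)$ bi-Lipschitz'' (immediate from~\eqref{chgtcartes} and compactness of $T$) and the fact that $T$, sitting inside a manifold, has finite box dimension; both are routine, and if one prefers one can bypass them altogether by invoking the finiteness part of~\cite{GLW} directly.
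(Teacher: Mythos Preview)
Your proof is correct and follows essentially the same route as the paper's: reduce via Theorem~\ref{hThrho} and Lemma~\ref{lemlienhGLWh} (the paper leaves the use of Theorem~\ref{hFhT} implicit) to the finiteness statement of Ghys--Langevin--Walczak for Lipschitz generators, which the paper simply cites as~\cite[Proposition~2.7]{GLW}. Your optional self-contained box-counting argument for that finiteness is a reasonable addition and is valid given the standing assumption of Subsection~\ref{subsecsusp} that $X$ (hence $\set{T}_0\simeq T$) sits inside a real manifold.
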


  \begin{proof} We use the notations of the previous subsection. If $\rho(\mathcal{G})$ contains only \mbox{Lipschitz} maps, it follows from~\cite[Proposition~2.7]{GLW} that $h_{\text{GLW}}\left(\rho(\Gamma),\rho(\mathcal{G})\right)$ is finite. We apply Theorem~\ref{hThrho} and Lemma~\ref{lemlienhGLWh} to conclude the proof.
  \end{proof}

  On the other hand, still by a previous work of Ghys, Langevin and Walczak, we have the following necessary condition for having minimal entropy~$2$. Note that a priori, having an invariant measure for the action of a non-amenable group~$\Gamma$ is a rare phenomenon (see for example~\cite{Furman,CantatDujardin,Roda} and references therein). 

  \begin{prop}\label{2impinv} Let $\fol$ be a suspension with entropy~$2$. Then, there exists a Borel probability measure~$\nu$ on~$T$ which is invariant by~$\Gamma$ (see Definition~\ref{defnmesstat} below).
  \end{prop}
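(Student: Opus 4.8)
The plan is to show that the hypothesis $h(\fol)=2$ propagates, through Theorems~\ref{hFhT} and~\ref{hThrho} and Lemma~\ref{lemlienhGLWh}, to the vanishing of the Ghys--Langevin--Walczak entropy of the $\Gamma$-action on $T$, and then to quote the theorem of~\cite{GLW} according to which a finitely generated pseudo-group of homeomorphisms of a compact space with vanishing geometric entropy preserves a probability measure.

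Concretely, I would argue as follows. Assume $h(\fol)=2$. By Theorem~\ref{hFhT}, $h(\fol)=h(\set{T}_0)+2$, so $h(\set{T}_0)=0$. Fix a finite symmetric generating set $\mathcal{G}$ of $\Gamma$ with $\id\notin\mathcal{G}$, and put $\omega(\alpha)=\dPC{0}{\alpha(0)}$ for $\alpha\in\mathcal{G}$; since $\mathcal{G}$ is finite and no element of $\Gamma$ besides the identity has a fixed point in $\set{D}$, the weight $\omega$ takes values in a compact subinterval $[c_1,c_2]$ of $\set{R}_+^*$. Theorem~\ref{hThrho} then gives $0\leq h(\rho,\mathcal{G},\omega)\leq h(\set{T}_0)=0$, so $h(\rho,\mathcal{G},\omega)=0$, and Lemma~\ref{lemlienhGLWh}, applicable thanks to the bounds $c_1\leq\omega\leq c_2$, yields $h_{\rm{GLW}}(\rho(\Gamma),\rho(\mathcal{G}))=0$.

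It remains to produce the invariant measure, and here the substance of the argument is imported wholesale from~\cite{GLW}: since the holonomies of a suspension are global, the pseudo-group on $T$ generated by $\rho(\mathcal{G})$ is just the group $\rho(\Gamma)$ acting on $T$, and the cited result furnishes a Borel probability measure $\nu$ on $T$ with $\rho(\gamma)_{*}\nu=\nu$ for every $\gamma\in\Gamma$, that is, a $\Gamma$-invariant measure in the sense of Definition~\ref{defnmesstat}. The only points requiring care are bookkeeping ones: that the entropy $h_{\rm{GLW}}$ of Remark~\ref{remlienhGLWh} and Lemma~\ref{lemlienhGLWh} is exactly the one for which~\cite{GLW} proves the invariant-measure statement, and---should that statement be phrased for transversals that are manifolds---that both its entropy and its measure construction use only the compact metric structure of $T$, so that the global $\Gamma$-action coming from the suspension is covered and one indeed obtains a genuine $\Gamma$-invariant probability measure on $T$.
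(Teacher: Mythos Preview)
Your proof is correct and follows essentially the same route as the paper: reduce $h(\fol)=2$ to $h_{\rm GLW}(\rho(\Gamma),\rho(\mathcal{G}))=0$ via Theorems~\ref{hFhT}, \ref{hThrho} and Lemma~\ref{lemlienhGLWh}, and then invoke~\cite[Section~5]{GLW}. The paper's argument is a shade terser (it compresses the chain into a single inequality $h(\fol)\geq 2+c\,h_{\rm GLW}(\rho(\Gamma),\rho(\mathcal{G}))$), but the logic is identical, and your additional remarks about the positivity of $\omega$ and the applicability of the GLW result to the group (rather than pseudo-group) setting are sound.
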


  \begin{proof} It is shown in~\cite[Section~5]{GLW} that if~$\Gamma$ is a group of homeomorphisms of a compact metric space~$T$ and~$\mathcal{G}$ is a set of generators of~$\Gamma$ such that $\mathcal{G}=\mathcal{G}^{-1}$ and \mbox{$h_{\text{GLW}}(\Gamma,\mathcal{G})=0$}, then there exists a probability measure~$\nu$ on~$T$ which is invariant by~$\Gamma$. Since by Theorem~\ref{hThrho} and Lemma~\ref{lemlienhGLWh}, $h(\fol)\geq2+c_2h_{\text{GLW}}(\rho(\Gamma),\rho(\mathcal{G}))$, for $c_2=\max(\omega)$, this result applies.

  \end{proof}

\section{Measure-theoretic entropy}\label{secmesharm}

In this section, we will consider a measure-theoretic entropy for hyperbolic laminations, defined by Dinh, Nguy\^{e}n and Sibony~\cite{DNSI}.

\subsection{Harmonic, invariant and stationary measures}

\begin{defn}
  Roughly speaking, harmonic measures are quasi-invariant measures by the lamination~$\fol$. More precisely, let $\fol=\nspllam$ be a hyperbolic lamination and~$\mu$ be a Borel probability measure on~$X$. The measure~$\mu$ is said to be \emph{harmonic} if for every flow box $U\simeq\set{D}\times\set{T}$, there exists a Radon measure~$\nu$ on~$\set{T}$ and harmonic functions $h_t\colon\set{D}\to\set{R}_+$, for $t\in\set{T}$, such that for every integrable function $f\colon U\to\set{R}$,
  \begin{equation} \label{defmesharm}\int fd\mu=\int_{\set{T}}\int_{\set{D}}h_t(z)f(t,z)\metPC{}(z)d\nu(t).\end{equation}
  If moreover the harmonic functions~$h_t$, $t\in\set{T}$, are all constant,~$\mu$ is called an invariant measure.
\end{defn}

There are analogous concepts for group actions.

\begin{defn}\label{defnmesstat}
  Let $\Gamma$ be a group acting on a topological space~$T$ and~$m$ be a probability measure on~$\Gamma$. A Borel measure~$\nu$ on~$T$ is called \emph{invariant} by~$\Gamma$ if for every~$\gamma\in\Gamma$, $\gamma_*\nu=\nu$. It is called \emph{$m$-stationary} if it is invariant on average, \emph{i.e.} if
  \[\int_{\Gamma}\gamma_*\nu dm(\gamma)=\nu.\]
\end{defn}

In fact, in the case of suspensions, there is a bridge between these two couples of notions with the following result.

\begin{thm}[Alvarez~\cite{Alvarez}]\label{thmAlvarez} Let $\Sigma=\set{D}/\Gamma$ be a smooth compact Riemann surface of genus~$g\geq2$. There exists a probability measure~$m$ on~$\Gamma$ such that $m(\gamma)>0$ for every~$\gamma\in\Gamma$ and satisfying the following. Let~$\fol$ be a suspension obtained as in Subsection~\ref{subsecsusp}. Then, the map
  \[\mu\mapsto h_t(0)d\nu(t),\]
  with the notations of~\eqref{defmesharm} on the flow box $U\times\set{T}_0$, for~$U$ a small neighbourhood of~$0$ in~$\set{D}$, sends bijectively harmonic measures for~$\fol$ on $m$-stationary measures for~$\Gamma$. Moreover, it maps bijectively invariant measures for~$\fol$ on invariant measures for~$\Gamma$, and bijectively ergodic measures for~$\fol$ on ergodic measures for~$\Gamma$.
\end{thm}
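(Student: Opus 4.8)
The plan is to reprove the statement by discretizing the leafwise Brownian motion, in the style of Lyons--Sullivan and Furstenberg, so that $m$ depends only on the Fuchsian group. First I would fix the uniformization $\set{D}\to\Sigma$ sending $0$ to a base point $x_0$ and choose, once and for all, a $\Gamma$-equivariant family of pairwise disjoint small hyperbolic balls $B_\gamma$ centered at $\gamma(0)$, $\gamma\in\Gamma$. Running Poincar\'e Brownian motion on $\set{D}$ started at $0$ together with the Lyons--Sullivan stopping-time mechanism attached to these balls produces a random walk on the orbit $\Gamma(0)\simeq\Gamma$; let $m$ be the law of its first step. Since the heat kernel on $\set{D}$ is strictly positive and $\set{D}$ is connected, the scheme can be arranged so that $m(\gamma)>0$ for every $\gamma$, and by construction $m$ is built entirely ``upstairs'', hence depends on $\Sigma$ alone and not on any representation.

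Next I would set up the dictionary. By Garnett's description~\cite{Gar} a harmonic measure $\mu$ on $\fol$ lifts to a $\Gamma$-invariant locally finite measure $\hat\mu$ on $\set{D}\times T$ with leafwise disintegration $h(z,t)\,\metPC{}(z)\,d\lambda(t)$, where $z\mapsto h(z,t)$ is positive harmonic and the $\Gamma$-invariance of $\hat\mu$ is a cocycle identity relating $h(\gamma z,\rho(\gamma)t)$, $h(z,t)$ and the transverse measure. The Alvarez map sends $\mu$ to $\bar\nu=h_t(0)\,d\nu(t)$. The heart of the matter is to show that the mean-value property of $h(\cdot,t)$ at $0$, read through one step of the Lyons--Sullivan walk, is \emph{exactly} the stationarity relation $\int_\Gamma\rho(\gamma)_*\bar\nu\,dm(\gamma)=\bar\nu$; this amounts to checking that the operator the discretization induces on measures on $T$ is literally $\nu\mapsto\sum_\gamma m(\gamma)\rho(\gamma)_*\nu$, which works uniformly in $\rho$ precisely because the walk was constructed on $\set{D}$. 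For injectivity one reconstructs $h_t$ on all of $\set{D}$ from its boundary data, which $\bar\nu$ and the $\rho$-cocycle determine through a Poisson-type formula, hence recovers $\mu$. For surjectivity one runs the construction in reverse: from an $m$-stationary $\nu$ the martingale attached to the walk yields, for each $t$, a positive harmonic $h_t$ on $\set{D}$; assembling the $h_t$ into a $\Gamma$-invariant measure on $\set{D}\times T$ and pushing it down gives a harmonic measure on $\fol$ mapping to $\nu$. This reverse direction is the classical ``inverse Lyons--Sullivan'' statement, which is easy here because $\Sigma$ is compact, so the Harnack constants are uniform and there is no escape of mass beyond ordinary Fatou theory.

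Finally, the extra structure transfers by soft arguments once the correspondence is known to be bijective and affine between the two Choquet simplices of harmonic and of $m$-stationary probability measures. Invariance of $\mu$ for $\fol$ means every $h_t$ is constant, which collapses the cocycle identity to genuine $\Gamma$-invariance $\rho(\gamma)_*\bar\nu=\bar\nu$; conversely a $\Gamma$-invariant $\nu$ is a fortiori $m$-stationary, and the $h_t$ it produces are deck-invariant, hence descend to bounded harmonic functions on the compact surface $\Sigma$, hence are constant. Ergodicity is preserved because an affine bijection of simplices matches extreme points, and ergodic harmonic (resp.\ stationary) measures are exactly the extreme points; equivalently, one checks that the heat-flow-invariant measurable sets for $\fol$ correspond to the $\Gamma$-invariant measurable sets for the action on $T$, both being controlled by the tail of the same random walk. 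I expect the main obstacle to be the middle paragraph: exhibiting a single $m$ valid for all representations simultaneously and establishing the intertwining in \emph{both} directions, the surjectivity being the delicate half where compactness of the base $\Sigma$ is essential.
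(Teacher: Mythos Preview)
The paper does not prove this theorem at all: it is quoted as a result of Alvarez~\cite{Alvarez} and used as a black box, with the remark that ``the theorem above is valid in a broader context, but we state it in the one that is relevant here.'' So there is no proof in the paper to compare against.

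That said, your sketch is a faithful outline of the Lyons--Sullivan/Furstenberg discretization strategy that underlies Alvarez' original argument: build $m$ from the hitting distribution of leafwise Brownian motion on a $\Gamma$-equivariant family of balls in $\set{D}$, then read the mean-value property of the leafwise harmonic densities as $m$-stationarity of the transverse measure. The key structural points --- that $m$ depends only on $\Sigma$ (not on $\rho$), that the correspondence is affine between Choquet simplices so ergodicity passes to extreme points, and that constant $h_t$ corresponds to genuine $\Gamma$-invariance --- are all correctly identified. Your assessment that surjectivity (inverse Lyons--Sullivan) is the delicate half, rescued by compactness of $\Sigma$ and uniform Harnack, is also accurate. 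As a roadmap this is sound; turning it into a complete proof requires making the stopping-time construction precise enough to guarantee $m(\gamma)>0$ for \emph{every} $\gamma$ (not just generators), and carefully verifying the cocycle identity in both directions, but these are exactly the technical points Alvarez handles.
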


The theorem above is valid in a broader context, but we state it in the one that is relevant here. Note that Proposition~\ref{2impinv} now gives the following.

 \begin{prop}\label{2impinvv2} Let $\fol$ be a suspension with entropy~$2$. Then, there exists an invariant measure for~$\fol$.
 \end{prop}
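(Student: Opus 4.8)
The plan is to combine the two results that immediately precede the statement: Proposition~\ref{2impinv}, which extracts a $\Gamma$-invariant probability measure on~$T$ from the hypothesis $h(\fol)=2$, and Alvarez' theorem (Theorem~\ref{thmAlvarez}), which transfers invariant measures for the $\Gamma$-action on~$T$ into invariant measures for the lamination~$\fol$. So the argument is a one-line composition of these two facts.

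First I would invoke Proposition~\ref{2impinv}. Since $\fol$ is a suspension of a representation $\rho\colon\Gamma\to\Homeo(T)$ with $h(\fol)=2$, that proposition yields a Borel probability measure~$\nu$ on~$T$ which is invariant by~$\Gamma$ in the sense of Definition~\ref{defnmesstat}, that is, $\gamma_*\nu=\nu$ (equivalently $\rho(\gamma)_*\nu=\nu$) for every $\gamma\in\Gamma$. Then I would apply Theorem~\ref{thmAlvarez}: it states that the map $\mu\mapsto h_t(0)d\nu(t)$ restricts to a \emph{bijection} between invariant measures for~$\fol$ and invariant measures for~$\Gamma$. Since $\nu$ lies in the codomain of this bijection, there is a (unique) invariant measure~$\mu$ for~$\fol$ mapping onto~$\nu$. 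In particular $\fol$ admits an invariant measure, which is exactly the assertion; note that such a~$\mu$ is automatically harmonic, as invariant measures are by definition the harmonic ones whose leafwise densities are constant.

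There is essentially no obstacle here: the statement is a formal consequence of the two cited results, and the present proposition is best read as the "measure on the lamination" reformulation of Proposition~\ref{2impinv}. The only genuinely substantive input — already absorbed into the proof of Proposition~\ref{2impinv} — is the passage from the geometric equality $h(\fol)=2$ to the vanishing $h_{\text{GLW}}(\rho(\Gamma),\rho(\mathcal{G}))=0$, which runs through Theorems~\ref{hFhT} and~\ref{hThrho} together with Lemma~\ref{lemlienhGLWh}; granting that, the Ghys--Langevin--Walczak construction supplies the $\Gamma$-invariant measure and Alvarez' correspondence does the rest.
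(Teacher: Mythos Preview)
Your proposal is correct and matches the paper's approach exactly: the paper simply remarks that Proposition~\ref{2impinv} ``now gives'' the statement, meaning one combines the $\Gamma$-invariant measure on~$T$ from Proposition~\ref{2impinv} with the invariant-to-invariant bijection of Theorem~\ref{thmAlvarez}. There is nothing to add.
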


\subsection{Entropy of a harmonic measure}

For the dynamics of a single map, the usual definition of the entropy of a measure relies on measurable partitions. In our context, such a definition does not work, but thanks to Katok~\cite{Katok} and Brin--Katok~\cite{BrinKatok} theorems, one can still have analogies with only a notion of Bowen balls.

\begin{defn}[See~{\cite[Section~4]{DNSI}}]\label{defentmes} Let~$\fol=\nspllam{}$ be a hyperbolic lamination and~$\mu$ a harmonic measure for~$\fol$. For $R,\eps,\delta>0$, denote by $N(R,\eps,\delta)$ the minimal amount of $(R,\eps)$-Bowen balls required to cover a subset of $\mu$-measure at least $1-\delta$. Define the \emph{entropy of~$\mu$} to be
  \[h(\mu)=\sup_{\delta>0}\sup_{\eps>0}\limsup_{R\to+\infty}\frac{1}{R}\log N(R,\eps,\delta).\]
  Given $x\in X$, one also defines \emph{local upper and lower entropies of~$\mu$ at~$x$}
  \[\begin{aligned} h^+(\mu,x)&=\sup_{\eps>0}\limsup_{R\to+\infty}-\frac{1}{R}\log\mu(B_R(x,\eps)),\\
  h^-(\mu,x)&=\sup_{\eps>0}\liminf_{R\to+\infty}-\frac{1}{R}\log\mu(B_R(x,\eps)).\end{aligned}\]
  Finally, consider a local flow box $U\simeq\set{D}\times\set{T}$ and a local disintegration of~$\mu$ into a leafwise harmonic density and a transversal Radon measure~$\nu$ as in~\eqref{defmesharm}. Rescale the~$h_t$ and~$\nu$ if necessary to have that $h_t(0)=1$, for all $t\in\set{T}$. Let~$\pi_{\set{T}}\colon U\to\set{T}$ be the projection on the second coordinate. Define the \emph{local transversal upper and lower entropies of~$\mu$ at~$x$} to be
  \[\begin{aligned} \wt{h}^+(\mu,x)&=\sup_{\eps>0}\limsup_{R\to+\infty}-\frac{1}{R}\log\nu(\pi_{\set{T}}(B_R(x,\eps))),\\
  \wt{h}^-(\mu,x)&=\sup_{\eps>0}\liminf_{R\to+\infty}-\frac{1}{R}\log\nu(\pi_{\set{T}}(B_R(x,\eps))).\end{aligned}\]
\end{defn}

In our context, such quantities were studied by Dinh, Nguy\^{e}n and Sibony~\cite{DNSI} (see also~\cite{Bac4} for a singular context). In particular, they show the following.

\begin{thm}[Dinh--Nguy\^{e}n--Sibony~{\cite[Theorem~4.2 and Proposition~4.5]{DNSI}}]\label{ordreent} Consider $\fol=\nspllam$ a compact hyperbolic lamination and~$\mu$ a harmonic measure. Then, the quantities $x\mapsto\wt{h}^{\pm}(\mu,x)$ and $x\mapsto h^{\pm}(\mu,x)$ are leafwise constant with $h^{\pm}(\mu,x)=2+\wt{h}^{\pm}(\mu,x)$. In particular, if~$\mu$ is ergodic, then all these quantities are constant almost everywhere. Denote them by $\wt{h}^{\pm}(\mu)$, $h^{\pm}(\mu)$. We have
  \[2\leq h^-(\mu)\leq h(\mu)\leq h^+(\mu)\leq h(\fol).\]
\end{thm}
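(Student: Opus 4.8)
The statement is a reformulation of results of Dinh, Nguy\^{e}n and Sibony~\cite[Theorem~4.2 and Proposition~4.5]{DNSI}, so the plan is to recall their strategy, noting that for suspensions it can be streamlined using the canonical parametrisations $\phi_t$ and the reparametrisation-free description of the entropy from Remark~\ref{remhh'}. The heart of the matter is a two-sided \emph{product estimate} for Bowen balls. Fix a flow box $U\simeq\set{D}\times\set{T}$, disintegrate $\mu=h_t(z)\,\metPC{}(z)\,d\nu(t)$ with $h_t(0)=1$, and let $x=(0,t_0)\in U$. One first shows that, for $\eps$ small and $R$ large, the leafwise trace $\{\,y=\phi_x(w)\in\leafu{x}~;~d_R(x,y)<\eps\,\}$ is trapped between two Euclidean disks $\phi_x(\{|w|<c_1e^{-R}\})$ and $\phi_x(\{|w|<c_2e^{-R}\})$: this is precisely the content of Lemmas~\ref{lemBac4angles} and~\ref{lemDNS1Aut} --- two uniformisations whose base points lie at Poincar\'e distance of order $e^{-R}$ remain $\eps$-close on $\adhDR{R}$, up to a rotation of size of order $e^{-R}$ --- argued exactly as in Lemma~\ref{lemhh'} and the proof of Theorem~\ref{hFhT}. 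Simultaneously one controls the transversal trace $\pi_{\set{T}}(B_R(x,\eps))$ by analogous two-sided inclusions; in the suspension case this reduces, via Lemma~\ref{dGammad'}, to comparing $d_R'$ with the holonomy distances, and in general it is the Dinh--Nguy\^{e}n--Sibony holonomy argument.

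Since, near $x$, the leafwise density of $\mu$ is bounded above and below (with value $1$ at $x$) and the leafwise Bowen ball is a Euclidean disk of radius of order $e^{-R}$, integrating the product estimate shows that $\mu(B_R(x,\eps))$ equals, up to subexponential factors, a constant multiple of $e^{-2R}\,\nu(\pi_{\set{T}}(B_R(x,\eps)))$; taking $-\frac1R\log$, then $\limsup$ (resp.\ $\liminf$) in $R$, then $\sup$ in $\eps$, gives $h^{\pm}(\mu,x)=2+\wt{h}^{\pm}(\mu,x)$, the summand $2$ being the exponential rate of the Poincar\'e area of a Euclidean disk of radius $e^{-R}$. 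For leaf-invariance: if $y$ lies on the leaf of $x$ with $\phi_x^{-1}(y)$ at fixed Poincar\'e distance $\ell$ from $0$, then $\phi_y=\phi_x\circ\tau\circ r_\theta$ for an isometry $\tau$ sending $0$ to $\phi_x^{-1}(y)$, so $\tau(\adhDR{R-\ell})\subset\adhDR{R}\subset\tau(\adhDR{R+\ell})$ and one obtains comparisons of the form $B_{R+C}(y,\eps)\subset B_R(x,C\eps)$ with $C=C(\ell)$ fixed; dividing $-\log\mu(\,\cdot\,)$ by $R$ and letting $R\to\infty$ kills the shift and the constant, so $h^{\pm}(\mu,\cdot)$ and $\wt{h}^{\pm}(\mu,\cdot)$ are constant on leaves. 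Each set $\{x~;~h^{+}(\mu,x)\le c\}$ is then leaf-saturated and Borel (a countable $\sup$/$\limsup$ of the measurable maps $x\mapsto\mu(B_R(x,\eps))$), hence of $\mu$-measure $0$ or $1$ when $\mu$ is ergodic, which forces a.e.\ constancy of all four functions.

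It remains to establish $2\le h^-(\mu)\le h(\mu)\le h^+(\mu)\le h(\fol)$. Here $h^-(\mu)\le h^+(\mu)$ is trivial and $h^-(\mu)\ge2$ follows from the previous step, since $\wt{h}^-(\mu)\ge0$ (the $\nu$-masses of the shrinking sets $\pi_{\set{T}}(B_R(x,\eps))$ stay bounded). The two middle inequalities are Katok/Brin--Katok estimates. For $h(\mu)\le h^+(\mu)$: given $s>h^+(\mu)$, Egorov provides $A'$ with $\mu(A')\ge1-\delta$ on which $\mu(B_R(x,\eps))>e^{-sR}$ for all large $R$; a maximal $(R,\eps)$-separated subset of $A'$ then has fewer than $e^{sR}$ points and its $(R,\eps)$-balls cover $A'$, so $N(R,\eps,\delta)<e^{sR}$ eventually. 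For $h(\mu)\ge h^-(\mu)$: given $s<h^-(\mu)$, Egorov provides $A'$ with $\mu(A')>1-\delta$ on which $\mu(B_R(x,2\eps))<e^{-sR}$ for all large $R$, and any family of $(R,\eps)$-balls covering a set of $\mu$-measure $\ge1-\delta$, after enlarging those meeting $A'$ to concentric $(R,2\eps)$-balls centred in $A'$ (using the triangle inequality for $d_R$), covers $A'$ by sets of $\mu$-mass $<e^{-sR}$, hence must have at least of order $e^{sR}$ members. Finally, for $h^+(\mu)\le h(\fol)$: for $s>h(\fol)$, the set $G_R=\{x~;~\mu(B_R(x,\eps))<e^{-sR}\}$ is covered by at most $M(X,R,\eps/2)\le e^{(h(\fol)+o(1))R}$ Bowen balls of $\mu$-mass $<e^{-sR}$, so $\sum_R\mu(G_R)<\infty$; Borel--Cantelli gives $\mu(B_R(x,\eps))\ge e^{-sR}$ eventually for a.e.\ $x$, and intersecting over countably many $\eps$ and over $s$ decreasing to $h(\fol)$ yields $h^+(\mu,x)\le h(\fol)$ a.e.

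The main obstacle is the product estimate of the first paragraph. The leafwise direction is under control through Lemmas~\ref{lemBac4angles}--\ref{lemDNS1Aut}, and the reparametrisation issue in~\eqref{defdR} has already been disposed of in Remark~\ref{remhh'}; what is delicate is the \emph{uniform} two-sided control of the transversal trace $\pi_{\set{T}}(B_R(x,\eps))$, with constants independent of $R$ and of the base point, combined with the fact that the product structure is only approximate --- so $\mu(B_R(x,\eps))$ must genuinely be squeezed between an upper and a lower estimate before passing to the limit. In the general lamination case this is the core of the Dinh--Nguy\^{e}n--Sibony argument; for suspensions it can be reduced, via Lemma~\ref{dGammad'} and the inequality $d_R'\ge d_{\Gamma_R}$, to uniform continuity of the finitely many holonomies $\rho(\alpha)$ with $\dPC{0}{\alpha(0)}\le\delta_0$.
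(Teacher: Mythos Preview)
The paper itself gives no proof of this theorem: it is quoted verbatim as a result of Dinh, Nguy\^{e}n and Sibony \cite[Theorem~4.2 and Proposition~4.5]{DNSI}, and the text moves on immediately after the statement. So there is nothing to compare against; your proposal is effectively a sketch of the argument in \cite{DNSI}, and as such it is broadly faithful. The product estimate $\mu(B_R(x,\eps))\asymp e^{-2R}\,\nu(\pi_{\set{T}}(B_R(x,\eps)))$, obtained via the exponential pinching of uniformisations (your appeal to Lemmas~\ref{lemBac4angles}--\ref{lemDNS1Aut} and the mechanism of Lemma~\ref{lemhh'}), is indeed the heart of $h^{\pm}=2+\wt{h}^{\pm}$, and the Katok/Brin--Katok style derivations of the chain $h^-(\mu)\le h(\mu)\le h^+(\mu)\le h(\fol)$ are the standard ones.

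Two places where your sketch is a bit loose. First, the leaf-invariance step: the inclusion you write, $B_{R+C}(y,\eps)\subset B_R(x,C\eps)$, is not literally correct, because after the substitution $\phi_y=\phi_x\circ\tau$ the map $\xi\mapsto e^{i\theta}\tau^{-1}(\xi)$ is not a rotation, so one does not land directly in the definition of $d_R(x,\cdot)$. The actual argument in \cite{DNSI} compares the \emph{transversal} projections $\pi_{\set{T}}(B_R(x,\eps))$ and $\pi_{\set{T}}(B_R(y,\eps))$ via holonomy (which is what makes $\wt{h}^{\pm}$ leafwise constant), and then transports this to $h^{\pm}$ through the identity $h^{\pm}=2+\wt{h}^{\pm}$; alternatively one uses the shift $\adhDR{R-\ell}\subset\tau^{-1}(\adhDR{R})\subset\adhDR{R+\ell}$ together with the full freedom of choosing the automorphism in the infimum defining $d_R$. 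Second, in your lower bound $h(\mu)\ge h^-(\mu)$ you implicitly use that any set of measure $\ge1-\delta$ must intersect the Egorov set $A'$ in measure at least $1-2\delta$; this is fine, but it means you should take $\mu(A')>1-\delta$ with the \emph{same} $\delta$ appearing in $N(R,\eps,\delta)$ (or halve it), otherwise the covering you start from need not see $A'$ at all. Neither point is a genuine gap, only a matter of writing the bookkeeping correctly.
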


In~\cite[Problems~5 and~6]{DNSI}, the three authors ask whether $h^+(\mu)=h^-(\mu)$ (a Brin--Katok-type theorem) and whether one has the variational principle $h(\fol)=\sup_{\mu}h(\mu)$. Except when the entropy equals~$2$, for which it is trivial, there are no examples yet in which one can show it holds or it does not. Below, in Theorem~\ref{casZ}, we will study a toy-example in which one can use the classical theorems for maps to show them on foliations.

Before ending this subsection, let us give a more adapted way to compute the transversal entropies in the case of suspensions.

\begin{lem}\label{enttransmes} Let $\fol$ be a suspension with the usual notations, $\mu$ be a harmonic measure for~$\fol$ and recall the Bowen distances $d_R'$ defined in~\eqref{defdR'}. For $t\in T$, define
  \[B_R^{\set{T}}(t,\eps)=\{s\in T~;~d_R'(s,t)<\eps\},\qquad B_R^{\set{T},\Gamma}(t,\eps)=\{s\in T~;~d_{\Gamma_R}(t,s)<\eps\},\]
  where $d_{\Gamma_R}$ is defined in~\eqref{defdGammaR}. Then, if~$\nu$ denotes the measure on~$T$ given by Theorem~\ref{thmAlvarez},
  \[\begin{aligned}\wt{h}^+(\mu,t)&=\sup_{\eps>0}\limsup_{R\to+\infty}-\frac{1}{R}\log\nu\left(B_R^{\set{T}}(t,\eps)\right)=\sup_{\eps>0}\limsup_{R\to+\infty}-\frac{1}{R}\log\nu\left(B_R^{\set{T},\Gamma}(t,\eps)\right),\\
  \wt{h}^-(\mu,t)&=\sup_{\eps>0}\liminf_{R\to+\infty}-\frac{1}{R}\log\nu\left(B_R^{\set{T}}(t,\eps)\right)=\sup_{\eps>0}\liminf_{R\to+\infty}-\frac{1}{R}\log\nu\left(B_R^{\set{T},\Gamma}(t,\eps)\right).\end{aligned}\]
\end{lem}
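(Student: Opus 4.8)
The plan is to reduce the statement to a single fact: the three families of transversal Bowen balls occurring in it, namely $\pi_{\set{T}}(B_R(t,\eps))$, $B_R^{\set{T}}(t,\eps)$ and $B_R^{\set{T},\Gamma}(t,\eps)$, are mutually comparable --- each sandwiched, up to a bounded multiplicative change of $\eps$ and a bounded additive change of $R$, between balls of another family. Such changes are invisible to $\sup_{\eps>0}\limsup_{R\to+\infty}\bigl(-\frac{1}{R}\log\nu(\,\cdot\,)\bigr)$, and likewise with $\liminf$, so all four claimed equalities would follow at once, the $\wt{h}^+$ and $\wt{h}^-$ versions being identical up to interchanging $\limsup$ and $\liminf$. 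Throughout, I would fix a flow box $U\simeq\set{D}\times\set{T}$ around $[0,t]\in\set{T}_0$ small enough that its leafwise size is less than the constant $h$ of Lemma~\ref{lemh>dp>eps}; by the normalisation $h_t(0)=1$ of Definition~\ref{defentmes}, the transversal measure $\nu$ of the disintegration of $\mu$ on $U$ is exactly the measure of Theorem~\ref{thmAlvarez} restricted to $\set{T}$, so the two measures called $\nu$ coincide.

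The comparison between $B_R^{\set{T}}$ and $B_R^{\set{T},\Gamma}$ is handed to us by Lemma~\ref{dGammad'}: $d_{\Gamma_R}\le d_R'$ gives $B_R^{\set{T}}(t,\eps)\subseteq B_R^{\set{T},\Gamma}(t,\eps)$, while the second assertion of that lemma yields, for each $\eps>0$, some $\lambda>0$ with $B_{R+\delta_0}^{\set{T},\Gamma}(t,\lambda)\subseteq B_R^{\set{T}}(t,\eps)$; the shift $R\mapsto R+\delta_0$ does not affect the $\frac{1}{R}\log$ asymptotics. So the real work is to compare $\pi_{\set{T}}(B_R(t,\eps))$ with $B_R^{\set{T}}(t,\eps)$.

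One inclusion is easy: if $d_R'(s,t)<\eps$, then $d_R(s,t)\le d_R'(s,t)<\eps$ by Remark~\ref{remhh'}, so $[0,s]\in B_R(t,\eps)$, and since $[0,s]=\phi_s(0)\in U$ projects to $s$ under $\pi_{\set{T}}$ we get $B_R^{\set{T}}(t,\eps)\subseteq\pi_{\set{T}}(B_R(t,\eps))$ for $\eps$ small. The other inclusion is where the content lies. Given $s\in\pi_{\set{T}}(B_R(t,\eps))$, there is some $\xi_0\in\set{D}$ with $\dPC{0}{\xi_0}<h$ (the leafwise size of the flow box) such that $d_R(\phi_t(0),\phi_s(\xi_0))<\eps$; unfolding the infimum over rotations in~\eqref{defdR}, this produces an automorphism $\sigma\in\Aut(\set{D})$ with $\sigma(0)=\xi_0$ and $d(\phi_t(\zeta),\phi_s(\sigma(\zeta)))<\eps$ for all $\zeta\in\adhDR{R}$. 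The difficult point is that $\sigma$ fixes only a tiny ball around the origin and could for instance be a hyperbolic translation, hence far from any rotation on $\adhDR{R}$, so there is no reason for $\phi_s\circ\sigma$ to be close to $\phi_s$ there. I would get around this by a connectedness argument in the spirit of Lemma~\ref{lemhh'}: the continuous function $\eta\mapsto\dPC{\eta}{\sigma(\eta)}$ on the connected set $\adhDR{R}$ is less than $h$ at $\eta=0$; at every $\eta$ where it is $\le h$, statement~\eqref{dp>eps} of Lemma~\ref{lemh>dp>eps} applied to $\eta$ and $\sigma(\eta)$ forces $\dPC{\eta}{\sigma(\eta)}\le c\,d(\phi_t(\eta),\phi_s(\sigma(\eta)))<c\eps<h$ once $\eps$ is small; hence $\{\eta\in\adhDR{R}~;~\dPC{\eta}{\sigma(\eta)}<h\}$ is open and closed in $\adhDR{R}$, so equals $\adhDR{R}$, and therefore $\dPC{\eta}{\sigma(\eta)}<c\eps$ for all $\eta\in\adhDR{R}$. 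Then statement~\eqref{dp<eps} of the same lemma gives $d(\phi_s(\sigma(\eta)),\phi_s(\eta))\le c\,\dPC{\sigma(\eta)}{\eta}<c^2\eps$, so by the triangle inequality $d(\phi_t(\eta),\phi_s(\eta))<(1+c^2)\eps$ for every $\eta\in\adhDR{R}$; that is, $d_R'(t,s)\le(1+c^2)\eps$ and $s\in B_R^{\set{T}}(t,C\eps)$ with $C=2(1+c^2)$.

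It would then remain to assemble the inclusions, apply $-\frac{1}{R}\log\nu(\,\cdot\,)$, pass to $\limsup_{R\to+\infty}$ (resp.\ $\liminf_{R\to+\infty}$) --- where the additive shifts of $R$ disappear since $\frac{R+\delta_0}{R}\to1$ --- and finally to $\sup_{\eps>0}$, where the multiplicative constants on $\eps$ get absorbed. Apart from this routine bookkeeping, the one genuine obstacle is the reparametrisation issue in the last inclusion, settled by the clopen-set argument above.
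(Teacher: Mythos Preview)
Your proposal is correct. The comparison with the paper's own proof is instructive.

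For the right-hand equalities (between the $d_R'$-balls and the $d_{\Gamma_R}$-balls) and for the easy inclusion $B_R^{\set{T}}(t,\eps)\subseteq\pi_{\set{T}}(B_R(t,\eps))$, you do exactly what the paper does. The difference lies in the hard inclusion $\pi_{\set{T}}(B_R(t,\eps))\subseteq B_{R'}^{\set{T}}(t,\eps')$. The paper argues in two stages: first it invokes ``the same arguments as in Lemma~\ref{lemhh'}'' to force the leafwise coordinate~$\zeta$ (your~$\xi_0$) to satisfy $\dPC{0}{\zeta}<Ae^{-R}$; then it applies Lemma~\ref{lemDNS1Aut}\eqref{lemDNSI<} to compare the parametrisations from~$\zeta$ and from~$0$ on a slightly smaller disk $\adhDR{(1-\delta)R}$, and finally feeds the resulting bound on~$d_{(1-\delta)R}$ through Lemma~\ref{lemhh'} to reach $d'_{(1-\delta)^2R}$. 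This costs a multiplicative $(1-\delta)^2$ loss in~$R$, harmless in the limit but requiring an extra $\sup$ over~$\delta$. Your clopen argument bypasses both auxiliary lemmas: by showing directly that $\dPC{\eta}{\sigma(\eta)}<c\eps$ on all of $\adhDR{R}$, you obtain $d_R'(t,s)<(1+c^2)\eps$ with no loss in~$R$ at all, only a fixed multiplicative loss in~$\eps$. This is both sharper and more self-contained; it also makes transparent that the reparametrisation~$\sigma$ is globally $c\eps$-close to the identity in the Poincar\'e metric, a fact the paper's route only yields implicitly.
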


\begin{proof} The right equalities are consequences of Lemma~\ref{dGammad'}. For the left ones, with the notations of Definition~\ref{defentmes}, for some small~$h>0$ and if~$\eps$ is sufficiently small,
  \[\pi_{\set{T}}(B_R(t,\eps))=\{s\in T~;~\exists\zeta\in\DR{h}~\text{with}~d_R(\phi_s(\zeta),t)<\eps\}.\]
  Since $d_R'>d_R$, it is clear that we have the inequality ``$\leq$'' in the two equalities to be shown. Now, consider $s\in\pi_{\set{T}}(B_R(t,\eps))$ and take $\zeta\in\DR{h}$ with $d_R(\phi_s(\zeta),t)<\eps$. By the same arguments as in Lemma~\ref{lemhh'}, we have $\dPC{0}{\zeta}<Ae^{-R}$. It follows by statement~\eqref{lemDNSI<} of Lemma~\ref{lemDNS1Aut} that $d_{R(1-\delta)}(\phi_s(\zeta),\phi_s(0))<\eps$, for any $\delta>0$ and~$R$ sufficiently large. Hence, $d_{R(1-\delta)}(t,s)<2\eps$ and $d'_{R(1-\delta)^2}(s,t)<4\eps$ by Lemma~\ref{lemhh'}. We conclude by the same arguments as in Remark~\ref{remhh'}.
\end{proof}

\subsection{Case of a single map} \label{subsecZ}

Here, we consider a suspension~$\fol$, for which $\rho(\Gamma)=\langle f\rangle\simeq\set{Z}$ is generated by a single map~$f$, and with the usual notations. More precisely, for $\alpha\in\mathcal{G}$, denote by $n(\alpha)\in\set{Z}$ the integer such that $\rho(\alpha)=f^{n(\alpha)}$. Our main result is the following.

\begin{thm}\label{casZ} In the setup above, there exists a constant $K_0$ that does not depend on~$f$, such that
  \begin{enumerate}[label=(\roman*),ref=\roman*]
  \item\label{itemhtop} $h(\fol)=2+2K_0h_{\text{top}}(f)$
  \item\label{itemBK} Every harmonic measure~$\mu$ on~$\fol$ is actually invariant. Denote by~$\nu$ its trace on~$T$ given by Theorem~\ref{thmAlvarez}. Then, if~$\mu$ is ergodic,
    \[h^-(\mu)=h(\mu)=h^+(\mu)=2+2K_0h_{\nu}(f).\]
  \item\label{itemvar} $h(\fol)=\sup_{\mu}h(\mu)=\sup_{\mu~\text{ergodic}}h(\mu)$.
  \end{enumerate}
\end{thm}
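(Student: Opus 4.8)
## Proof proposal for Theorem~\ref{casZ}

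The plan is to reduce everything to the classical theory for the single map~$f$ via the machinery already built, so the core task is to identify the constant~$K_0$ and to upgrade the inequalities of Theorems~\ref{hFhT}, \ref{hThrho} and~\ref{ordreent} to equalities in this $\set{Z}$-generated case. First I would introduce the weight picture: with $\mathcal{G}$ a symmetric generating set of~$\Gamma$, $\omega(\alpha)=\dPC{0}{\alpha(0)}$ and $n(\alpha)\in\set{Z}$ with $\rho(\alpha)=f^{n(\alpha)}$, the Bowen distance $d_R^{\rho,\mathcal{G},\omega}$ sees, along a path $\alpha_1\cdots\alpha_k$ with $\sum\omega(\alpha_i)\le R$, exactly the maps $f^{m}$ with $m=\sum_{i\le j}n(\alpha_i)$. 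The first key step is a purely combinatorial one in~$\Gamma$: determine the asymptotics of $n(R):=\max\{\,|m|~;~f^m=\rho(\alpha),\ \alpha\in\Gamma,\ \dPC{0}{\alpha(0)}\le R\,\}$, and show $n(R)/R\to K_0$ for a constant $K_0$ depending only on~$\Gamma$ (and on the choice of which generator carries~$f$, but this is fixed). This is where I expect the subadditive argument mentioned in the introduction to enter: $\dPC{0}{\alpha\beta(0)}\le\dPC{0}{\alpha(0)}+\dPC{0}{\beta(0)}$ gives superadditivity of the ``best reach'' of $f^m$, hence a limit by Fekete. Concretely, $K_0^{-1}=\lim_{m\to\infty}\tfrac1m\inf\{\dPC{0}{\alpha(0)}~;~\rho(\alpha)=f^m\}$, which by Proposition~\ref{propcovest} (the covering estimate relating word length and hyperbolic distance) is a genuine positive finite number.

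Next I would prove part~\eqref{itemhtop}. By Theorems~\ref{hFhT} and~\ref{hThrho} it suffices to show $h(\set{T}_0)=2K_0 h_{\mathrm{top}}(f)$; equivalently, using Remark~\ref{remhh'} and Lemma~\ref{dGammad'}, that the entropy of the family $d_{\Gamma_R}$ on~$T$ equals $2K_0 h_{\mathrm{top}}(f)$. The point is that, by the previous step, a $(\Gamma_R,\eps)$-separated (resp. covering) set for $d_{\Gamma_R}$ is, up to uniform continuity of~$f$ and its inverse and the equicontinuity argument of Lemma~\ref{dGammad'}, essentially an $\eps'$-separated (resp. covering) set for the two-sided dynamics $\{f^m : |m|\le n(R)\}$; and the two-sided Bowen metric for an invertible map has the same entropy as the one-sided one. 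Since $n(R)\sim K_0 R$, dividing $\log$ by $R$ rather than by the number of iterates $n(R)$ produces the factor $K_0$; the remaining factor~$2$ is the genuinely two-sided count, which for a homeomorphism still gives $h_{\mathrm{top}}(f)$ once one divides by $n(R)$, so in fact the clean statement is $\limsup_R \tfrac1R\log(\#\text{separated})= K_0\cdot h_{\mathrm{top}}(f)\cdot 1$ for the one-sided count and the factor~$2$ comes from Ghys--Langevin--Walczak's computation $h_{\mathrm{GLW}}=2h(f)$ already invoked in Subsection~\ref{subseccsq}. I would make this precise by running the two squeeze inequalities: $d_{\Gamma_{(K^{-1}-\delta)N}}\le d^{\rho,\mathcal{G},\omega}_{c_2 N}$ from Proposition~\ref{propcovest} on one side, and $d^{\rho,\mathcal{G},\omega}_R\ge$ (some $f$-Bowen distance of depth $\asymp K_0 R$) on the other, tracking constants until they meet.

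For part~\eqref{itemBK}, first note that since $\rho(\Gamma)=\langle f\rangle$ is abelian, every $m$-stationary measure on~$T$ is automatically $f$-invariant (a stationary measure for a random walk on~$\set{Z}$ with the drift forced to zero by compactness is invariant), so by Alvarez' Theorem~\ref{thmAlvarez} every harmonic~$\mu$ on~$\fol$ is invariant, with $f$-invariant trace~$\nu$. Then by Lemma~\ref{enttransmes} the transversal entropies $\wt h^{\pm}(\mu,t)$ are computed from $-\tfrac1R\log\nu(B_R^{\set{T},\Gamma}(t,\eps))$, and $B_R^{\set{T},\Gamma}(t,\eps)$ is, up to the uniform-continuity comparisons above, the two-sided $f$-Bowen ball $\{s : d_T(f^m s, f^m t)<\eps',\ |m|\le n(R)\}$ with $n(R)\sim K_0 R$. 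Now apply the classical Brin--Katok theorem to the homeomorphism~$f$ and the invariant ergodic measure~$\nu$: for $\nu$-a.e.~$t$, $-\tfrac1{n}\log\nu(\{s:d(f^m s,f^m t)<\eps,\ |m|\le n\})\to h_\nu(f)$ (the two-sided version, which follows from the one-sided one by invertibility), uniformly enough that replacing $n$ by $n(R)$ and dividing by $R$ gives $\wt h^-(\mu,t)=\wt h^+(\mu,t)=2K_0 h_\nu(f)$; the $h^{\pm}(\mu)=2+\wt h^{\pm}(\mu)$ relation from Theorem~\ref{ordreent} then yields the chain of equalities, with $h(\mu)$ squeezed in between by the same theorem. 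Finally part~\eqref{itemvar} is immediate from \eqref{itemhtop}, \eqref{itemBK} and the classical variational principle $h_{\mathrm{top}}(f)=\sup_\nu h_\nu(f)=\sup_{\nu\ \mathrm{ergodic}}h_\nu(f)$, together with the fact that Alvarez' correspondence is a bijection on (ergodic) invariant measures.

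The main obstacle is the first step combined with the uniformity needed in the Brin--Katok transfer: one must show not merely that $n(R)/R\to K_0$ but that this holds with controlled error so that $-\tfrac1R\log\nu(\cdots)$ inherits the a.e. convergence of $-\tfrac1{n(R)}\log\nu(\cdots)$, and one must handle the ``reparametrisation-by-rotation'' discrepancy between $d_R$ and $d_R'$ and between $d_R'$ and $d_{\Gamma_R}$ without losing the sharp constant — this is exactly why Lemma~\ref{lemhh'}, Remark~\ref{remhh'} and Lemma~\ref{dGammad'} were isolated, and the proof should lean on them repeatedly, passing to $(1-\delta)R$ and letting $\delta\to0$ at each comparison.
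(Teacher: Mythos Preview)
Your outline matches the paper's proof closely: subadditive limit for $K_0$ via Fekete, reduction of $h(\set{T}_0)$ to the two-sided $f$-Bowen picture through $d_{\Gamma_R}$ and uniform continuity, Choquet--Deny for abelian quotients to get invariance, then two-sided Brin--Katok transferred through Lemma~\ref{enttransmes}, and finally the classical variational principle.

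Two points to tighten. First, you invoke Theorem~\ref{hThrho} for item~\eqref{itemhtop}, but that theorem only gives a two-sided inequality with the lossy constant $K'$; the paper bypasses it here and works directly with $d_{\Gamma_R}$ versus $d_{n(R)}^{f^{\pm1}}$ to get the exact equality, using only Theorem~\ref{hFhT} (which \emph{is} an equality). Second, and more substantively, your sentence ``the two-sided version, which follows from the one-sided one by invertibility'' hides the real difficulty: the identity $\nu(B_n^{f^{\pm1}}(t,\eps))=\nu(B_{2n}^f(f^{-n}(t),\eps))$ has a base point $f^{-n}(t)$ that depends on~$n$, so one cannot simply substitute into the a.e.\ limit of Brin--Katok. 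The paper therefore re-enters the proof: for the lower bound it uses the summability estimate of Brin--Katok at the shifted point together with $f$-invariance of~$\nu$ and Borel--Cantelli, and for the upper bound it proves a two-sided Shannon--McMillan--Breiman via Birkhoff for both $f$ and $f^{-1}$. (Incidentally your intermediate claim $-\tfrac{1}{n}\log\nu(B_n^{f^{\pm1}}(t,\eps))\to h_\nu(f)$ should read $2h_\nu(f)$, consistent with your final formula.) You correctly identify this as the main obstacle in your last paragraph; just be aware that resolving it requires going inside the proof rather than citing the statement.
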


Here, the fact that~$K_0$ does not depend on~$f$ means more precisely that it depends only on the Fuchsian group~$\Gamma$, and on the powers~$n(\alpha)$, for $\alpha\in\mathcal{G}$, but not on other choices. Moreover, we denoted by $h_{\text{top}}(f)$ the topological entropy of~$f$ in the usual sense, and by $h_{\nu}(f)$ its measure-theoretic entropy for~$\nu$. Note that item~\eqref{itemvar} is a consequence of the first two, and of the classical variational principle for maps (see~\cite[Theorem~8.6 and Corollary~8.6.1]{Wal}). Let us emphasize on the~$2$ in the theorem above, which comes from the fact that powers of~$f$ from~$-n$ to~$n$ are considered, instead of classically from~$0$ to~$n$.

\begin{rem} Theorem~\ref{casZ} shows that there is no converse to Proposition~\ref{2impinv}. Even a weak version stating that~$\mu$ invariant implies $h(\mu)=0$ does not hold. Here, even if all harmonic measures are invariant, we don't necessarily have that $h(\fol)=2$. Other examples of random dynamical systems for which all stationary measures are invariant (so-called \emph{stiffness}) can be found in~\cite{CantatDujardin,Roda} and references therein.
\end{rem}

Let us start by finding the constant~$K_0$. For~$R>0$, denote by
\[n(R)=\max\{n\in\set{N}~;~\exists\alpha\in\Gamma,~\dPC{0}{\alpha(0)}\leq R~\text{and}~\rho(\alpha)=f^n\}.\]

\begin{lem}\label{existK0} The limit $K_0=\lim_{R\to+\infty}\frac{n(R)}{R}$ exists in~$\set{R}_+^*$.
\end{lem}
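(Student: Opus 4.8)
The plan is to run a Fekete-type superadditivity argument for the non-decreasing, integer-valued function $R\mapsto n(R)$, and then to bound its growth from above using the covering estimate of Proposition~\ref{propcovest}.

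\emph{Superadditivity and positivity.} Given $R,S>0$, pick $\alpha,\beta\in\Gamma$ with $\dPC{0}{\alpha(0)}\le R$, $\rho(\alpha)=f^{n(R)}$ and $\dPC{0}{\beta(0)}\le S$, $\rho(\beta)=f^{n(S)}$. Since $\alpha$ is an isometry of the Poincaré metric, $\dPC{0}{\alpha\beta(0)}\le\dPC{0}{\alpha(0)}+\dPC{\alpha(0)}{\alpha\beta(0)}=\dPC{0}{\alpha(0)}+\dPC{0}{\beta(0)}\le R+S$, while $\rho(\alpha\beta)=f^{n(R)+n(S)}$; hence $n(R+S)\ge n(R)+n(S)$. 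As $n$ is moreover non-decreasing, the continuous form of Fekete's lemma applies and gives that $\lim_{R\to+\infty}n(R)/R$ exists and equals $\sup_{R>0}n(R)/R$. Writing $f=\rho(\alpha_1)$ for some word $\alpha_1$ in $\mathcal{G}$ (possible since $\mathcal{G}$ generates $\Gamma$ and $\rho(\Gamma)=\langle f\rangle$) and $R_1=\dPC{0}{\alpha_1(0)}$, we get $n(R_1)\ge 1$, so $K_0\ge 1/R_1>0$; thus $K_0\in(0,+\infty]$ so far.

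\emph{Finiteness.} It remains to show $K_0<+\infty$, i.e. $n(R)\le CR$. Since $n(R)$ does not depend on the choice of fundamental domain, we may take $D$ in Proposition~\ref{propcovest} to be the Dirichlet domain centered at $0$, so that $0$ — hence each translate-center $\alpha(0)$ — lies in the interior of its copy $\alpha(D)$. Fix $\delta\in(0,K^{-1})$ with $K=K(\Gamma)$ as in Proposition~\ref{propcovest}, set $M=\max_{\beta\in\mathcal{G}}\Cmod{n(\beta)}$, and let $R$ be large. If $\rho(\alpha)=f^n$ with $\dPC{0}{\alpha(0)}\le R$, then applying the left inclusion of Proposition~\ref{propcovest} with $N=\sentp{R/(K^{-1}-\delta)}$ gives $\alpha(0)\in\DR{(K^{-1}-\delta)N}\subset\bigcup_{k\le N,\ \alpha_i\in\mathcal{G}}\alpha_1\cdots\alpha_k(D)$, so $\alpha(0)\in\alpha_1\cdots\alpha_k(D)$ for some $\alpha_i\in\mathcal{G}$ and some $k\le N$. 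As $\alpha(0)$ lies in the interior of $\alpha(D)$ and distinct $\Gamma$-translates of $D$ have disjoint interiors, this forces $\alpha=\alpha_1\cdots\alpha_k$, whence $f^n=\rho(\alpha)=f^{\sum_{i=1}^k n(\alpha_i)}$ and $n=\sum_{i=1}^k n(\alpha_i)$ (the powers of $f$ being distinct, $\rho(\Gamma)\simeq\set{Z}$). Therefore $n\le kM\le NM\le CR$ for $R$ large, with $C$ depending only on $\Gamma$ and on the integers $n(\beta)$, $\beta\in\mathcal{G}$. Taking the maximum over admissible $\alpha$ yields $n(R)\le CR$, so $K_0\le C<+\infty$ and depends only on those data.

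\emph{Main obstacle.} The only delicate point is the identification $\alpha=\alpha_1\cdots\alpha_k$ from the covering estimate; I would handle it exactly as above, by choosing the Dirichlet fundamental domain so that the relevant points sit in the interior of a single copy. Everything else is the triangle inequality for the Poincaré distance together with Fekete's lemma.
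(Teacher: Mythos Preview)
Your proof is correct and follows essentially the same approach as the paper: superadditivity of $n$ (equivalently, subadditivity of $-n$) gives existence of the limit via Fekete, a linear lower bound gives positivity, and Proposition~\ref{propcovest} gives the linear upper bound and hence finiteness. The only notable difference is that you spell out the identification $\alpha=\alpha_1\cdots\alpha_k$ by passing to the Dirichlet domain so that $0$ is interior, whereas the paper simply asserts the bound $n(R)\le\sentp{2KR}\max_{\alpha\in\mathcal{G}}n(\alpha)$ without comment; your extra care there is warranted and the argument is sound.
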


\begin{proof} It is easy to see that $R\mapsto-n(R)$ is subadditive. Hence, the limit of $\frac{n(R)}{R}$ exists in $\intcc{0}{\infty}$ by~\cite[Theorem~4.9]{Wal}. Moreover, there exists~$K_1>1$ with $K_1^{-1}R\leq n(R)\leq K_1R$, for sufficiently large~$R$. Indeed, on the one hand $n(R)\geq\ientp{\frac{R}{\dPC{0}{\alpha(0)}}}\Cmod{n(\alpha)}$, for $\alpha\in\mathcal{G}$. On the other hand, using the notations of Proposition~\ref{propcovest} and applying it to $\delta=K^{-1}/2$, one gets $n(R)\leq \sentp{2KR}\max_{\alpha\in\mathcal{G}}n(\alpha)$. Therefore, $K_0$ is neither~$0$ nor~$\infty$. Recall that for $x\in\set{R}$, we denote by $\ient{x}$ (resp. $\sent{x}$) the highest (resp. lowest) integer $n\in\set{Z}$ such that $n\leq x$ (resp $n\geq x$). 
\end{proof}

\begin{proof}[Proof of item~\eqref{itemhtop} of Theorem~\ref{casZ}] Note that the constant~$K_0$ in the lemma above depends only on the Fuchsian group and on the powers $n(\alpha)$, $\alpha\in\mathcal{G}$. Now, recall the Bowen distance~$d_{\Gamma_R}$ defined in~\eqref{defdGammaR} and denote by $d_n^{f^{\pm1}}(t,s)=\max_{i\in\intent{-n}{n}}d(f^i(s),f^i(s))$, for $s,t\in T$ and~$n\in\set{N}$. Finally, denote by $d_n^f(s,t)=\max_{i\in\intent{0}{n}}d(f^i(s),f^i(t))$ the classical Bowen distance. Write $B_{\Gamma_R}(t,\eps)$, $B_n^{f^{\pm1}}(t,\eps)$, $B_n^f(t,\eps)$ for the corresponding Bowen balls and $N_{\Gamma}(T,R,\eps)$, $N_f(T,n,\eps)$, $N_{f^{\pm1}}(T,n,\eps)$ for the corresponding minimal number of Bowen balls required to cover~$T$, as in Definition~\ref{defnN(Y,R,E)}. Then, the Bowen definition of the topological entropy~\cite[Definition~7.9 and~7.10]{Wal} is
  \[h_{\text{top}}(f)=\sup_{\eps>0}\limsup_{n\to+\infty}\frac{1}{n}\log N_f(T,n,\eps).\]
  On the other hand, for $t\in T$ and $n,\eps>0$, $B_n^{f^{\pm1}}(t,\eps)=f^n\left(B_{2n}^f\left(f^{-n}(t),\eps\right)\right)$, so that $N_{f^{\pm1}}(T,n,\eps)=N_f(T,2n,\eps)$. Now, if we define
  \[I(R)=\{i\in\set{Z}~;~\exists\alpha\in\Gamma,~\dPC{0}{\alpha(0)}\leq R,~\rho(\alpha)=f^i\},\]
  by definition $I(R)\subset\intent{-n(R)}{n(R)}$. Moreover, the maximal step between two consecutive points of~$I(R)$ is at most~$P=\max_{\alpha\in\mathcal{G}}(n(\alpha))$. Therefore, by a uniform continuity argument on~$f^p$, for $p\in\intent{1}{P}$, as in Lemma~\ref{dGammad'}, we have that for all~$\eps>0$, there exists~$\lambda>0$ such that $B_{\Gamma_R}(t,\lambda)\subset B_{n(R)}^{f^{\pm1}}(t,\eps)\subset B_{\Gamma_R}(t,\eps)$, for any~$t\in T$. Hence,
  \[N_{\Gamma}(\set{T}_0,R,\lambda)\geq N_{f^{\pm1}}(T,n(R),\eps)=N_f(T,2n(R),\eps)\geq N_{\Gamma}(\set{T}_0,R,\eps).\]
  We conclude with Lemma~\ref{existK0}, usual arguments that~$d_{\Gamma_R}$ and $d_R$ define the same entropy, and Theorem~\ref{hFhT}.
\end{proof}

Note that by the above inclusions, and by Lemma~\ref{enttransmes}, we have the following.

\begin{lem}\label{enttransmes2} With the setup of Theorem~\ref{casZ}, for $t\in T$,
  \[\begin{aligned} \wt{h}^+(\mu,t)&=K_0\sup_{\eps>0}\limsup_{n\to+\infty}-\frac{1}{n}\log\nu\left(B_n^{f^{\pm1}}(t,\eps)\right),\\
  \wt{h}^-(\mu,t)&=K_0\sup_{\eps>0}\liminf_{n\to+\infty}-\frac{1}{n}\log\nu\left(B_n^{f^{\pm1}}(t,\eps)\right).\end{aligned}\]
\end{lem}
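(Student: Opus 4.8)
The plan is to feed the two–sided inclusion $B_{\Gamma_R}(t,\lambda)\subset B_{n(R)}^{f^{\pm1}}(t,\eps)\subset B_{\Gamma_R}(t,\eps)$ obtained in the proof of item~\eqref{itemhtop} into the description of $\wt h^{\pm}(\mu,t)$ through $B_R^{\set{T},\Gamma}(t,\eps)=B_{\Gamma_R}(t,\eps)$ furnished by Lemma~\ref{enttransmes}. Here $\lambda=\lambda(\eps)>0$ may be chosen so that $\lambda(\eps)\to 0$ as $\eps\to 0$ (the uniform continuity producing it allows it). Applying $\nu$ and then $-\frac1R\log(\cdot)$, one sandwiches $-\frac1R\log\nu\bigl(B_{n(R)}^{f^{\pm1}}(t,\eps)\bigr)$ between $-\frac1R\log\nu\bigl(B_{\Gamma_R}(t,\eps)\bigr)$ and $-\frac1R\log\nu\bigl(B_{\Gamma_R}(t,\lambda(\eps))\bigr)$; taking $\sup_{\eps>0}\limsup_{R\to+\infty}$ (resp.\ $\sup_{\eps>0}\liminf_{R\to+\infty}$) of the two bounds, monotonicity of $\delta\mapsto\limsup_R[-\frac1R\log\nu(B_{\Gamma_R}(t,\delta))]$ in $\delta$ and the fact that $\lambda(\eps)$ runs over arbitrarily small values show that both bounds have the same $\sup_\eps$-limit, namely $\wt h^{+}(\mu,t)$ (resp.\ $\wt h^{-}(\mu,t)$) by Lemma~\ref{enttransmes}.

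Next I would write the middle quantity as $-\frac1R\log\nu\bigl(B_{n(R)}^{f^{\pm1}}(t,\eps)\bigr)=\frac{n(R)}{R}\,a_\eps(n(R))$, where $a_\eps(n)=-\frac1n\log\nu\bigl(B_n^{f^{\pm1}}(t,\eps)\bigr)\geq 0$, and use Lemma~\ref{existK0} to replace the prefactor by its limit $K_0\in\set{R}_+^*$; since $K_0>0$ this gives $\limsup_R[\frac{n(R)}{R}a_\eps(n(R))]=K_0\limsup_R a_\eps(n(R))$ and likewise for $\liminf$. At this point one has $\wt h^{+}(\mu,t)=K_0\sup_\eps\limsup_R a_\eps(n(R))$ and $\wt h^{-}(\mu,t)=K_0\sup_\eps\liminf_R a_\eps(n(R))$, and it remains to replace the parametrization $R\mapsto n(R)$ by the integer index $n$. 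The inequality $\leq$ is immediate since $\{n(R)\}_{R>0}\subset\set{N}$. For $\geq$ I would use that $R\mapsto n(R)$ is nondecreasing, tends to $+\infty$, and has cofinal range with gaps bounded by a constant depending only on $\Gamma$ and the powers $n(\alpha)$ — which follows from the bounded-gap property of $I(R)$ recorded in the proof of item~\eqref{itemhtop} (consecutive elements of $I(R)$ differ by at most $P=\max_{\alpha\in\mathcal{G}}|n(\alpha)|$), together with a standard quasigeodesic estimate in $\set{D}$. Then for every large $n$ there is $R$ with $|n(R)-n|\leq P'$ for a constant $P'=P'(\Gamma)$, and a uniform continuity argument for $f^{\pm1},\dots,f^{\pm P'}$ (exactly as in the proof of Lemma~\ref{dGammad'}) yields, for every $\eps>0$, a $\lambda>0$ with $B_{n'}^{f^{\pm1}}(t,\lambda)\subset B_{n}^{f^{\pm1}}(t,\eps)$ whenever $|n-n'|\leq P'$; combining this with $n(R)/R\to K_0>0$ to absorb the normalizations into the $\sup_\eps$, one transfers the $\limsup$ and $\liminf$ from the range of $n(R)$ to all of $\set{N}$, which is the assertion.

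The only non-formal point is this last transfer: one must check that the step function $R\mapsto n(R)$ does not omit arbitrarily long arithmetic stretches of $\set{N}$, so that the $\limsup$ and $\liminf$ along its range coincide, after the harmless $\sup_\eps$, with those along all integers. Once this bounded-gap property is in hand, everything else is a direct manipulation of the sandwich inequality together with the asymptotics $n(R)\sim K_0 R$ from Lemma~\ref{existK0}.
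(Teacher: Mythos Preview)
Your approach is exactly what the paper has in mind: the sentence preceding the lemma reads ``Note that by the above inclusions, and by Lemma~\ref{enttransmes}, we have the following'', so the sandwich $B_{\Gamma_R}(t,\lambda)\subset B_{n(R)}^{f^{\pm1}}(t,\eps)\subset B_{\Gamma_R}(t,\eps)$ together with Lemma~\ref{enttransmes} and $n(R)/R\to K_0$ is the whole intended argument. You are right to isolate the one genuine point, namely the passage from $\limsup_R a_\eps(n(R))$ (resp.\ $\liminf$) to $\limsup_n a_\eps(n)$ (resp.\ $\liminf$); the paper glosses over it.

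Your proposed fix via a bounded-gap property of the range of $n(\cdot)$ would work, but the justification (``bounded gaps of $I(R)$ together with a standard quasigeodesic estimate'') is vague: it is not clear that jumps of $R\mapsto n(R)$ are uniformly bounded, since several $\alpha$'s may enter $\Gamma_R$ at the same radius with widely differing $n(\alpha)$. There is, however, a cleaner route that avoids this issue and the extra uniform-continuity step. Since $B_{n+1}^{f^{\pm1}}(t,\eps)\subset B_n^{f^{\pm1}}(t,\eps)$, the sequence $n\mapsto n\,a_\eps(n)=-\log\nu(B_n^{f^{\pm1}}(t,\eps))$ is nondecreasing. Moreover, the subadditivity in Lemma~\ref{existK0} gives $n(R)\leq K_0R$ for all $R$, so for any $\delta>0$ and any large $n$, choosing $R$ slightly above $n/K_0$ produces $m=n(R)$ in the range with $n\leq m\leq(1+\delta)n$; hence $a_\eps(m)\geq\frac{n}{m}a_\eps(n)\geq\frac{1}{1+\delta}a_\eps(n)$. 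Dually, choosing $R$ slightly below $n/K_0$ yields $m'=n(R)$ with $m'\leq n\leq(1+\delta)m'$ and $a_\eps(m')\leq(1+\delta)a_\eps(n)$. These two inequalities transfer both the $\limsup$ and the $\liminf$ from the range of $n(\cdot)$ to all of~$\set{N}$, for each fixed~$\eps$, without any gap hypothesis. With this replacement your argument is complete.
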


By Martingale theorem and Hewitt--Savage $0$--$1$ law~\cite{Blackwell}, one can show that every bounded $\rho_*m$-harmonic function on~$\set{Z}$ is constant. This is actually a very simple case of the Choquet--Deny property of virtually nilpotent groups~\cite{FHTF}. In particular, it implies that any $m$-stationary measure~$\nu$ is actually invariant by~$f$. Hence, by Theorems~\ref{thmAlvarez} and~\ref{ordreent}, the proof of Theorem~\ref{casZ} is reduced to the following.

\begin{prop} Let $f\colon T\to T$ be a homeomorphism of a compact metric space and~$\nu$ be an ergodic invariant measure. Then, for $\nu$-almost every $t\in T$,
  \[\sup_{\eps>0}\liminf_{n\to+\infty}-\frac{1}{n}\log\nu\left(B_n^{f^{\pm1}}(t,\eps)\right)=\sup_{\eps>0}\limsup_{n\to+\infty}-\frac{1}{n}\log\nu\left(B_n^{f^{\pm1}}(t,\eps)\right)=2h_{\nu}(f).\]
\end{prop}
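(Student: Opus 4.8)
The plan is to recognize the statement as a two-sided analogue of the Brin--Katok local entropy theorem and to prove it by transposing, word for word, the arguments of Katok~\cite{Katok} and Brin--Katok~\cite{BrinKatok} from the forward Bowen balls to the symmetric ones $B_n^{f^{\pm1}}(t,\eps)$, as announced in the introduction. The only classical ingredient that has to be upgraded is the Shannon--McMillan--Breiman theorem: I would use its \emph{two-sided} version, stating that for every finite Borel partition $\xi$ of $T$ and $\nu$-a.e.\ $t$,
\[-\frac{1}{2n+1}\log\nu\Big(\bigvee_{i=-n}^{n}f^{-i}\xi(t)\Big)\;\xrightarrow[n\to\infty]{}\;h_{\nu}(f,\xi),\]
where $\alpha(t)$ denotes the atom of a partition $\alpha$ containing $t$. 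This follows from the usual one-sided statement: the chain rule for the information function gives $-\log\nu(\bigvee_{i=-n}^{n}f^{-i}\xi(t))=\sum_{i=-n}^{n}I_{\nu}\big(\xi\mid\bigvee_{l=1}^{n-i}f^{-l}\xi\big)(f^{i}t)$; one replaces each term by the value at $f^{i}t$ of the a.e.\ and $L^1$ limit $\phi=I_{\nu}\big(\xi\mid\bigvee_{l\geq1}f^{-l}\xi\big)$ (whose integral is $h_{\nu}(f,\xi)$, by Rokhlin's formula), controls the resulting error using the integrability of $\sup_{m}I_{\nu}(\xi\mid\bigvee_{l=1}^{m}f^{-l}\xi)$, and concludes by the Birkhoff ergodic theorem applied to $f$ along the intervals $\intent{-n}{n}$. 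Here ergodicity of $\nu$ and the identity $h_{\nu}(f^{-1})=h_{\nu}(f)$ are used throughout.

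For the upper bound I would fix $\eps>0$, pick a finite Borel partition $\xi$ whose atoms all have diameter $<\eps$, and observe that if $s$ belongs to the atom of $\bigvee_{i=-n}^{n}f^{-i}\xi$ containing $t$, then $f^{i}s$ and $f^{i}t$ lie in a common $\xi$-atom for every $i\in\intent{-n}{n}$, hence $d(f^{i}s,f^{i}t)<\eps$; thus that atom is contained in $B_n^{f^{\pm1}}(t,\eps)$. Consequently $-\frac1n\log\nu(B_n^{f^{\pm1}}(t,\eps))\leq-\frac1n\log\nu(\bigvee_{i=-n}^{n}f^{-i}\xi(t))$, which by the two-sided Shannon--McMillan--Breiman theorem converges to $2h_{\nu}(f,\xi)\leq 2h_{\nu}(f)$ for $\nu$-a.e.\ $t$ (the factor $2$ coming from $\#\intent{-n}{n}=2n+1$). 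Letting the diameter of $\xi$ tend to $0$ gives $\sup_{\eps>0}\limsup_{n}-\frac1n\log\nu(B_n^{f^{\pm1}}(t,\eps))\leq 2h_{\nu}(f)$ a.e., hence the same bound for the $\liminf$.

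For the lower bound I would fix $\tau>0$ and choose a finite Borel partition $\xi$ with $\nu(\partial\xi)=0$ and $h_{\nu}(f,\xi)\geq h_{\nu}(f)-\tau$ (resp.\ $\geq1/\tau$ if $h_{\nu}(f)=+\infty$); then $\nu(\partial_{\eps}\xi)\downarrow0$ as $\eps\downarrow0$, where $\partial_{\eps}\xi=\{x~;~d(x,\partial\xi)<\eps\}$. If $s\in B_n^{f^{\pm1}}(t,\eps)$ and $i\in\intent{-n}{n}$ satisfies $f^{i}t\notin\partial_{\eps}\xi$, then $f^{i}s$ and $f^{i}t$ lie in the same $\xi$-atom; hence $B_n^{f^{\pm1}}(t,\eps)$ is covered by those atoms of $\bigvee_{i=-n}^{n}f^{-i}\xi$ that agree with the atom of $t$ outside $I_n(t):=\{i\in\intent{-n}{n}~;~f^{i}t\in\partial_{\eps}\xi\}$. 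By the Birkhoff ergodic theorem applied to $f$ and $\mathbf{1}_{\partial_{\eps}\xi}$, for a.e.\ $t$ one has $|I_n(t)|\leq(2n+1)(\nu(\partial_{\eps}\xi)+o(1))$, so the number of such atoms is at most $e^{(2n+1)\psi(\eps)}$ with $\psi(\eps)\to0$ as $\eps\to0$ ($\xi$, hence $\#\xi$, being now fixed). Combining this with the two-sided Shannon--McMillan--Breiman theorem, which makes almost every atom of $\bigvee_{i=-n}^{n}f^{-i}\xi$ have measure at most $e^{-(2n+1)(h_{\nu}(f,\xi)-\tau)}$, and estimating the contribution of the remaining (``atypical'') covering atoms by the same Borel--Cantelli-type argument as in~\cite{BrinKatok}, one obtains $\liminf_{n}-\frac1n\log\nu(B_n^{f^{\pm1}}(t,\eps))\geq 2h_{\nu}(f,\xi)-c(\eps,\tau)$ for a.e.\ $t$, with $c(\eps,\tau)\to0$ as first $\eps\to0$ and then $\tau\to0$. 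Taking the supremum over $\eps$ and letting $\tau\to0$ gives $\sup_{\eps>0}\liminf_{n}-\frac1n\log\nu(B_n^{f^{\pm1}}(t,\eps))\geq 2h_{\nu}(f)$ a.e., which together with the upper bound yields the claimed equalities.

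The hard part will be this last measure estimate: the covering atoms of $\bigvee_{i=-n}^{n}f^{-i}\xi$ lie close to the atom of $t$ in Hamming distance but are not directly controlled by the two-sided Shannon--McMillan--Breiman theorem, which only describes the atom of a $\nu$-typical point; one therefore has to reproduce the technical core of~\cite{BrinKatok} showing that the $\nu$-mass carried by the non-typical covering atoms is negligible on the exponential scale. This argument, together with the Birkhoff ergodic theorem, is what goes through unchanged once the intervals $\intent{0}{n-1}$ are replaced by $\intent{-n}{n}$; the case $h_{\nu}(f)=+\infty$ needs no new idea, since the lower bound with $h_{\nu}(f,\xi)\geq1/\tau$ already forces the left-hand side to $+\infty$.
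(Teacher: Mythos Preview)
Your proposal is correct, and for the upper bound it coincides with the paper's argument: both invoke a two-sided Shannon--McMillan--Breiman theorem and the inclusion of the atom of $\bigvee_{i=-n}^{n}f^{-i}\xi$ containing $t$ inside $B_n^{f^{\pm1}}(t,\eps)$.

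For the lower bound, however, the paper takes a noticeably shorter route than you do. Rather than reproducing the technical core of Brin--Katok in the two-sided setting (partitions with $\nu(\partial\xi)=0$, counting covering atoms close in Hamming distance, controlling atypical atoms), the paper exploits the elementary identity $B_n^{f^{\pm1}}(t,\eps)=f^{n}\bigl(B_{2n}^{f}(f^{-n}(t),\eps)\bigr)$, so that by $f$-invariance of $\nu$ one has $\nu\bigl(B_n^{f^{\pm1}}(t,\eps)\bigr)=\nu\bigl(B_{2n}^{f}(f^{-n}(t),\eps)\bigr)$. It then quotes directly from the Brin--Katok proof the summability of $\nu\{t:\nu(B_n^f(t,\eps))>e^{-(h_\nu(f)-\delta)n}\}$; applying $f^{-n}$ and invariance once more shows that $\nu\{t:\nu(B_n^{f^{\pm1}}(t,\eps))>e^{-2(h_\nu(f)-\delta)n}\}$ is a subsequence of this summable sequence, and Borel--Cantelli finishes. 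This avoids entirely the boundary/Hamming-distance combinatorics you describe. Your approach buys self-containment (you do not need to extract an intermediate estimate from inside the original proof), while the paper's buys brevity by reducing the two-sided statement to the one-sided one via a single change of variable.
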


This is almost exactly like Brin--Katok theorem~\cite{BrinKatok}, except that two-sided Bowen balls $B_n^{f^{\pm1}}$ are involved, instead of usual Bowen balls~$B_n^f$. Of course, our arguments will use Brin--Katok's theorem and its proof.

\begin{proof} We only do the proof for $h_{\nu}(f)<+\infty$, since it is easy to adapt it for infinite entropy. Let us start by showing that the $\liminf$ is greater than $2h_{\nu}(f)$. This is the most complicated part of the proof of Brin--Katok, but they show~\cite[p.~37]{BrinKatok} that for every~$\delta>0$, there exists~$\eps>0$ with
  \[\left(\nu\left\{t\in T~;~\nu\left(B_n^f(t,\eps)\right)>\exp\left(-(h_{\nu}(f)-\delta)n\right)\right\}\right)_{n\in\set{N}}\]
  summable. Since $\nu\left(B_n^{f^{\pm1}}(t,\eps)\right)=\nu\left(B_{2n}^f(f^{-n}(t),\eps)\right)$ because~$\nu$ is invariant,
  \[\left(\nu\left\{t\in T~;~\nu\left(B_n^{f^{\pm1}}(t,\eps)\right)>\exp\left(-2(h_{\nu}(f)-\delta)n\right)\right\}\right)_{n\in\set{N}}\]
  is also summable. Here, we used again the $\nu$-invariance by~$f$. Just as Brin and Katok, we conclude by Borel--Cantelli theorem that for $\nu$-almost every $t\in T$,
  \begin{equation}\label{h->}\sup_{\eps>0}\liminf_{n\to+\infty}-\frac{1}{n}\log\nu\left(B_n^{f^{\pm1}}(t,\eps)\right)\geq2h_{\nu}(f).\end{equation}

  Now, let us proceed with showing that the $\limsup$ is lower than $2h_{\nu}(f)$. Just as Brin and Katok, we use Shannon--McMillan--Breiman theorem, except that an adapted version is needed. Consider~$\xi$ a finite measurable partition of~$T$. For $x\in T$, denote by $c_n^{\pm}(x)$ the element of the partition $\bigvee_{i=-n}^nf^{-i}(\xi)$ containing~$x$. Using Birkhoff theorem for both~$f$ and~$f^{-1}$, one can adapt the proof of~\cite[Theorem~2.5]{Parry} to show that
  \[-\frac{1}{2n}\log\nu\left(c_n^{\pm}(t)\right)\underset{n\to+\infty}{\longrightarrow}h_{\nu}(f,\xi)\qquad\text{almost everywhere},\]
  where $h_{\nu}(f,\xi)\leq h_{\nu}(f)$ is the entropy of~$f$, with respect to the partition~$\xi$. Consider a finite measurable partition~$\xi$, the elements of which all have diameter at most~$\eps$. Then, $c_n^{\pm}(t)\subset B_n^{f^{\pm1}}(t,\eps)$. Hence,
  \[\limsup_{n\to+\infty}-\frac{1}{n}\log\nu\left(B_n^{f^{\pm1}}(t,\eps)\right)\leq2\limsup_{n\to+\infty}-\frac{1}{2n}\log\nu\left(c_n^{\pm}(x)\right)\leq 2h_{\nu}(f),\]
  almost everywhere. Taking the $\sup$ with $\eps>0$, and together with~\eqref{h->}, this finishes the proof.
\end{proof}


     
\bibliography{Suspensions}

\bibliographystyle{plain}

\end{document}